\documentclass[10pt]{article}

\usepackage{color}
\usepackage[latin1]{inputenc}
\usepackage[english]{babel}
\usepackage{amsmath,amssymb,amsfonts,amsthm,enumerate}
\usepackage{mathrsfs,epsfig,graphicx,makeidx}

\usepackage{url}

\numberwithin{equation}{section} 

\newtheorem{theorem}{Theorem}[section]
\newtheorem{lemma}[theorem]{Lemma}
\newtheorem{definition}[theorem]{Definition}
\newtheorem{remark}[theorem]{Remark}
\newtheorem{proposition}[theorem]{Proposition}
\newtheorem{corollary}[theorem]{Corollary}

\newcommand{\N}{\mathbb{N}}
\newcommand{\R}{\mathbb{R}}



\renewcommand{\leq}{\leqslant}
\renewcommand{\le}{\leqslant}

\renewcommand{\ge}{\geqslant}

\title{Is a nonlocal diffusion
strategy convenient\\
for biological populations in competition?}
\author{Annalisa Massaccesi -- Enrico Valdinoci
\thanks{This work
has been supported by ERC grant 277749 ``EPSILON Elliptic
Pde's and Symmetry of Interfaces and Layers for Odd Nonlinearities''.}}
\begin{document}
\maketitle

\begin{center}
\begin{minipage}[c]{0.8\textwidth}\small
Abstract: {\sf We study the viability of a nonlocal dispersal strategy 
in a 
reaction-diffusion system with a fractional Laplacian operator. We show that there are 
circumstances - namely, a precise condition on the distribution of the resource - under which  the introduction of a new
nonlocal dispersal behavior is favored with respect to the local dispersal behavior of the resident population. 

In particular, we consider the linearization of a biological system that models
the interaction of two biological species, one with local and one with nonlocal
dispersal, that are competing for the same resource. We give a simple, concrete example
of resources for
which the equilibrium with only the local population
becomes linearly unstable. In a sense, this example shows that nonlocal strategies can invade an environment in which purely local
strategies are dominant at the beginning, provided that the resource
is sufficiently sparse.

Indeed, the example considered presents a high variance of the distribution of the dispersal,
thus suggesting that the shortage of resources
and their unbalanced supply may be some of the basic
environmental factors that favor nonlocal 
strategies.}
\end{minipage}
\end{center}
\bigskip\bigskip

MSC2010 Classification Numbers: 35Q92, 46N60.

Keywords: Fractional equations, population dynamics.
\bigskip\bigskip

\begin{flushright}
{\footnotesize
{\it ``When the sun comes up, you better be running.''}

The Fable of the Lion and the Gazelle, 

popular quotation by Undetermined Author,

{\tt http://quoteinvestigator.com/2011/08/05/lion-gazelle/}

}
\end{flushright}
\bigskip

\section{Introduction}

The goal of this paper is to study the possible fate of
a nonlocal diffusion
strategy for a biological population
in presence of a highly oscillating distribution of resource.

The study of dispersal
strategies and the comparison between local and nonlocal
diffusive behaviors have recently attracted a great attention and several
researches have been developed both in terms of experiments
and from the purely mathematical point of view (see for instance 
\cite{Vis_al, Hum_al, Fried, Mon_Pel_Ver} and references therein).
Remarkably, the phenomenon of possibly nonlocal hunting strategies has attracted
also the attention of the mass-media, and related news can be found in popular
newspapers and magazines (see e.g. \cite{VEN}).

In this framework, even the distinction between local and nonlocal strategies is
somehow a delicate issue and it is still not exactly clear in all situations
what factors favor one behavior against the other.
Of course, in general, as we know even from 
experience in our everyday life, it may
be very difficult
to deduce from overall principles\footnote{By ``overall principles'' we mean the availability of a general method, depending on the measurement of some parameters in the environment, which allows a population to choose an optimal strategy. We are referring to the impossibility of having a satisfactory and complete model for population dynamics, due to the complexity of the biological world.}
the optimal strategy to follow in each complex situation.
Therefore, it is not surprising that
the question of detecting the optimal strategy
in a logistic mathematical model cannot have just a simple
answer that is valid in every situation, and, concretely,
very different dispersal
strategies have been directly
observed in nature.\medskip
 
Detecting, analyzing and understanding the differences
between diffusive
strategies is therefore a difficult, but
important, task in biology. One of the possible distinctions
among the different strategies lies in rigorously defining
the concept of ``locality''
(when a predator, roughly speaking, diffuses randomly
in the neighborhood looking for an available prey)
versus ``nonlocality'' (the short periods of hunting activity
are followed by rather long journeys of the predator in the
search for food). As expected, hunting strategies
of predators are definitely influenced by the distribution
of the resources.
When the resources are ``easily'' available,
it is conceivable that predators do not need to
elaborate a nonlocal hunting strategy and indeed it can be more
convenient not to drift too much to take advantage of
the rather abundant resource in their neighborhood. Conversely,
when the prey is sparse, it may be worth for
predators to interchange the local hunting activity
with suitable nonlocal travels in different possible regions.

Of course, the more sophisticated the species involved in the hunt, the easier
the latter phenomenon is expected to occur: namely,
an intelligent species of preys will 
run away from the danger, thus making the distribution of
resources for the predator sparse, and therefore making
a nonlocal hunting strategy possibly more favorable.
However, in the model considered in this paper the resource~$\sigma$ is
independent of the distribution of the populations, so this
effect is not taken into consideration by the setting discussed here.
\medskip

It is also evident that the distinction between local or nonlocal
strategy is a mathematical abstraction based on the consideration
of different space/time scales: i.e., the ambient space 
that the population has at its disposal
is not infinitely large in the real cases, and species cannot
really perform discontinuous, nonlocal jumps. Nevertheless,
a good mathematical model in which different scales are
taken into account may furnish a justification for
the diffusive strategy in a ``large enough'' environment
in which the time scales of travel and hunting activities
can be somehow distinguished in practice.
\medskip

We will try to give a rigorous mathematical framework to these 
na\"{\i}ves con\-si\-de\-ra\-tions by showing the possible advantages
of the long-jump dispersal strategies (i.e. the ones based
on nonlocal diffusion)
in regimes where the distribution of resources may be considerably
different at different points of the ambient space.
Not too surprisingly having in mind the concrete
applications, we will use for this scope
the mathematical framework of linearized systems
and scaling properties of the eigenvalues, which take
into account the stability property of equilibrium configurations.
\medskip

Our mathematical framework can be discussed as follows.
Reaction-diffusion systems provide
an effective continuous model for the biological problem of competition between different species.
The typical example of local
reaction-diffusion equation is 
\begin{equation}\label{cl_rd}
u_t=\Delta u+(\sigma-u)u\quad{\rm in}\ (0,T)\times\Omega\,.
\end{equation}
We study here the case of Dirichlet boundary conditions. Though other boundary conditions may be also taken into account to model different situations, our focus on the Dirichlet data is motivated by biological considerations (for instance, prescribing the solution to vanish outside a given domain corresponds to a confinement situation, for instance in a hostile environment).
In this model, the environment is represented by the open bounded set $\Omega\subset\R^n$, with~$n\ge2$, and a heterogeneous resource $\sigma:\Omega\to[0,+\infty)$ is given (stationary in time). The growth of the population density $u$ depends on a dispersal differential operator and on the reproductive rate of the population itself, which is proportional to the temporary availability of the resource $(\sigma-u)$. Dirichlet boundary conditions model a lethal environment for the population $u$ outside the domain $\Omega$. 

A reaction-diffusion system involves at least two species,
with distribution $u$ and~$v$, whose behavior is ruled
by a reaction-diffusion equation like \eqref{cl_rd}. 
The two competing species differ for some special features: 
indeed, \eqref{cl_rd} has to be modified in order to describe the 
foraging and reproductive habits of the species and further data 
concerning the environment. As it is customary in Adaptive Dynamics (see \cite{Diek} and \cite{Ha}), the first step in the study of the evolution of a given feature is to single it out and then assume that the two populations differ for this feature only. For instance, in our main\footnote{Up to Section \ref{sec:lin} we investigate the opposite situation, too, that is, when the resident population has a nonlocal dispersal strategy and the mutant population has a local one.} case the resident population has a local dispersal strategy and the mutant population has a nonlocal one. In \cite{Fried} and \cite{Diek}, one can find a 
comprehensive survey of the problem and of the standard approach in Adaptive Dynamics; many different features have been 
studied and compared in \cite{Doc_Hut_Mi_Per, Can_Co_Hu1, Can_Co_Hu2} (different dispersal rates and genetic mutations), in \cite{Hu_Mi_Po} (time-periodic sources) and in \cite{Can_Co_Lou1,Can_Co_Lou2,Chen_Ham_Lou} (addition of a chemotactic component depending on the gradient of the resource).

We are interested in the comparison of the dispersal strategies: in particular, we focus on the competition between a population with ``standard'' diffusion and a second population with nonlocal dispersal. Therefore, our model is
\begin{equation}\label{our_model}
\left\{
\begin{array}{l}
u_t=\hphantom{-(-)^s}\Delta u +\left(\sigma-(u+v)\right)u\\
v_t=-(-\Delta)^s v\,+\left(\sigma-(u+v)\right)v\,.\\
\end{array}
\right.
\end{equation}

At a discrete level, the ``standard'' assumption is that the motion of the po\-pu\-la\-tion is governed by a random walk and this obviously leads to a Laplacian operator in the continuous model. Analogously, since our interest is focused on a second population with nonlocal dispersal, we adopt the fractional Laplacian operator as dispersal operator for the 
second distribution. The choice of such nonlocal diffusion operator
is motivated by the fact that the fractional Laplacian has good stability properties in terms
of the associated stochastic processes (it is the ``continuous version'' of the discrete motion governed by L\'evy flights, see e.g.~\cite{Val1} for a simple motivation and~\cite{Be} for more advanced material), it possesses natural
scaling features and it seems also to appear in real experiments
(see e.g.~\cite{Vis_al, Hum_al}).
The present literature on the subject of nonlocal dispersal mostly
considers convolution operators (see \cite{Doc_Hut_Mi_Per,Ka_Lou_Shen1,Ka_Lou_Shen2,Can_Co_Lou_Ryan,Co_Da_Ma}).
In particular, in \cite{Ka_Lou_Shen1}, the model under investigation is
\begin{equation}\label{conv_model}
\left\{
\begin{array}{l}
u_t=\mu\Delta u +\left(\sigma-(u+v)\right)u\\
v_t=\nu\left(\delta^{-n}\int_D k\left(\frac{\cdot-y}{\delta}\right)v(y)\,dy-v\right)+\left(\sigma-(u+v)\right)v\,,\\
\end{array}
\right.
\end{equation}
where $\mu,\nu$ are the dispersal rates of the two populations, respectively, and $\delta$ is the dispersal distance of the second population.

Of course, it is a delicate business to decide, in concrete
situations, which models better describe the dispersion of
a real biological population, and many nonlocal terms have
been taken into account in order to comprise long-range effects.
In general, we believe that fractional equations may be an important
tool to further understand the complex problems arising
in the mathematical modelization of biological species
and we hope that the framework given in this paper can lead to
a further development of the subject.
\medskip

In Section \ref{sec:model} we provide
details and further explanations about the model considered here
and some basic facts about the fractional Laplacian operator.

We study the stability of a stationary solution $(\tilde u,0)$ of the aforementioned system, by means of a formal linearization at $(\tilde u,0)$, that we explain in Subsection~\ref{sec:lin}. The complete understanding
of the global dynamics of a general system of diffusive and competing populations
is beyond the scope of this paper and it seems, at first glance, 
very challenging from a mathematical point of view, since
a variety of possible situations may occur. Nevertheless, let us stress that even the analysis of the stability of a stationary solution (also called ``invasibility'' analysis) is
interesting and meaningful from an evolutionary point of view, as it is suggested in the principal literature in Adaptive Dynamics (again, see \cite{Diek}). In fact, a small perturbation around $(\tilde u,0)$ mirrors the occurrence of a genetic mutation in the first population, involving the dispersal strategy. At $(\tilde u,0)$ the first population benefits from an equilibrium state, while the second one does not even exist. Then a small portion of the first population (with density $\tilde u$) undergoes a genetic mutation, which starts a second population (with very small density $v$) which competes for the resource with the former. Of course, the genetic mutation of this theoretical experiment involves only the hunting/dispersal strategy, passing from a local to a nonlocal one. In this context, the expected outcome 
of the analysis of the stationary solution is, in most
of the cases experienced in practice, stability,
that is, the second population does not find the right conditions
to evolve
and it
gets rapidly extinguished. On the contrary, (even partial)
instability of these type of equilibria
is rather surprising and interesting, since in this case
the new dispersal strategy is convenient enough
to allow a short term survival of the second species and to provide a situation
of coexistence of two different populations.

\medskip

The core of this paper is Section \ref{sec:ciccia}, where we show how
the stability of $(\tilde u,0)$ (namely, the sign of the eigenvalues associated with the linearized system) depends on the distribution of the resource $\sigma$. In particular, we will show that
if a certain relationship between the variation of $\sigma$ and the 
fractional Poincar\'e-Sobolev constant in $\Omega$ is fulfilled (see Definition \ref{def:sscat}), then the linearized system has a positive eigenvalue and $(\tilde u,0)$ is unstable. It is transparent from Definition \ref{def:sscat} that the distributions leading to instability of $(\tilde u,0)$ (and suggesting convenience of a nonlocal dispersal strategy) are those with a ``huge variation''. The last part of Section \ref{sec:ciccia} is devoted to show that such a distribution $\sigma$ may occur. Summarizing, the 
result that states that the local dispersive strategy may become
unstable in presence of a new population endowed with nonlocal
diffusive strategies can be formally stated as follows:
 

\begin{theorem}\label{MAIN}
Let $\Omega\subset\R^n$ be an open subset of $\R^n$ with Lipschitz boundary and let $s\in (0,1)$. There exist bounded functions $\sigma:\Omega\to[0,+\infty)$ and~$\tilde u:\Omega\to[0,+\infty)$ such that $(u,v):=(\tilde u,0)$
is a linearly unstable equilibrium for the system
\begin{equation}\label{90}
\left\{
\begin{array}{rcll}
u_t= &\Delta u &\!\!\!\!+(\sigma-(u+v))u & \quad\text{in }\Omega\\
v_t= &\!\!\!\!-(-\Delta)^s v &\!\!\!\!+(\sigma-(u+v))v & \quad\text{in }\Omega\\
u\hphantom{_t}= & \!\!\!\!\!\!\!\!\!\!\!\!\!\!\!\!\!\!\!\!\!\!\!\! 0 & & \quad \text{on }\partial\Omega\\
v\hphantom{_t}= & \!\!\!\!\!\!\!\!\!\!\!\!\!\!\!\!\!\!\!\!\!\!\!\! 0 & & \quad \text{in }\R^n\setminus\Omega\,.
\end{array}
\right.
\end{equation}
More precisely, the function~$\tilde u$
is a solution of
\begin{equation}\label{90X}
\left\{
\begin{array}{ll}
\Delta\tilde u(x)+(\sigma(x)-\tilde u(x))\tilde u(x)=0 & {\mbox{ in }}\Omega,\\
\tilde u=0 & {\mbox{ on }}\partial\Omega,
\end{array}
\right.
\end{equation}
and the linearization of system in~\eqref{90} at~$(\tilde u,0)$
has a negative and a positive eigenvalue.
\end{theorem}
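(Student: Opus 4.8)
The plan is to build the example by first choosing the resident equilibrium $\tilde u$ and the resource $\sigma$ so that the linearized operator for the mutant population $v$ has a positive principal eigenvalue, while the linearized operator for the $u$-component (which decouples, since $v=0$ at equilibrium) retains a negative eigenvalue. I would begin by writing down the linearization of \eqref{90} at $(\tilde u,0)$: perturbing $u=\tilde u+\phi$, $v=\psi$, to first order one gets the block-triangular system $\phi_t=\Delta\phi+(\sigma-2\tilde u)\phi-\tilde u\,\psi$ and $\psi_t=-(-\Delta)^s\psi+(\sigma-\tilde u)\psi$. Because the $\psi$-equation does not involve $\phi$, the spectrum of the linearized system is the union of the spectrum of $L_1\phi:=\Delta\phi+(\sigma-2\tilde u)\phi$ (Dirichlet on $\Omega$) and of $L_2\psi:=-(-\Delta)^s\psi+(\sigma-\tilde u)\psi$ (with $\psi=0$ outside $\Omega$). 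So the theorem reduces to producing $\sigma,\tilde u$ with $\tilde u$ solving \eqref{90X}, such that $\lambda_1(L_2)>0$ while $\lambda_{\max}(L_1)<0$.

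For the $L_1$ part I would observe that if $\tilde u$ is the (unique positive) solution of the logistic problem \eqref{90X} and $\sigma\not\equiv 0$, then testing the eigenvalue problem against $\tilde u$ itself and using the equation \eqref{90X} shows that $\int_\Omega(|\nabla\tilde u|^2-(\sigma-2\tilde u)\tilde u^2)=-\int_\Omega \tilde u^3<0$ after substituting $\int|\nabla\tilde u|^2=\int(\sigma-\tilde u)\tilde u^2$; more carefully, the standard fact that the principal eigenvalue of the linearization of a scalar logistic equation at its positive steady state is strictly negative (this is exactly the statement that $\tilde u$ is a stable solution of the scalar problem) gives $\lambda_{\max}(L_1)<0$. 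This step is classical and I would simply cite the scalar logistic theory. The real content is the $L_2$ part: I need $\lambda_1(L_2)=\sup_{\psi}\big(\int_\Omega(\sigma-\tilde u)\psi^2-[\psi]_s^2\big)/\|\psi\|_2^2>0$, where $[\,\cdot\,]_s$ denotes the Gagliardo seminorm, i.e. I need a test function $\psi$ with $\int_\Omega(\sigma-\tilde u)\psi^2 > [\psi]_s^2$. Using the fractional Poincaré–Sobolev inequality $[\psi]_s^2\ge C_s\|\psi\|_2^2$ (or the relevant $L^q$ version), it suffices that $\sigma-\tilde u$ be large on a region where a fixed test function is concentrated, which is precisely the "huge variation of $\sigma$" condition announced after Definition~\ref{def:sscat}.

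The concrete construction, then, is: pick $\sigma$ to be small on most of $\Omega$ but with a tall, narrow spike (or a union of well-separated spikes) on a small ball $B\subset\Omega$; solve \eqref{90X} for the corresponding $\tilde u$; since $\tilde u\le\sup\sigma$ fails in general but $\tilde u$ stays controlled by the $L^\infty$-in-a-neighborhood behavior via comparison, on the bulk of $B$ one still has $\sigma-\tilde u$ large; then take $\psi$ a fixed bump supported in $B$ and check $\int_B(\sigma-\tilde u)\psi^2>[\psi]_s^2$ by making the spike tall enough. I would invoke the characterization of Definition~\ref{def:sscat} and whatever lemma in Section~\ref{sec:ciccia} certifies that such $\sigma$ exists; the theorem then follows by assembling these two sign computations.

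The main obstacle I anticipate is the nonlinear coupling between $\sigma$ and $\tilde u$: making $\sigma$ large forces $\tilde u$ to be comparably large on the same set, so $\sigma-\tilde u$ need not be large pointwise just because $\sigma$ is. Overcoming this is exactly why the "huge variation" (high variance) rather than merely "large average" of $\sigma$ is the right hypothesis — one wants $\sigma$ to oscillate so that $\tilde u$, being a solution of a local (hence averaging) equation, cannot track the peaks, leaving $\sigma-\tilde u$ positive and large on a set of definite measure adapted to the nonlocal test function. Quantifying this gap, via comparison principles and elliptic estimates for \eqref{90X} against the oscillation scale of $\sigma$, is the technical heart of the argument; everything else is the by-now standard linearization and eigenvalue bookkeeping.
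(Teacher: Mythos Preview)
Your framework matches the paper's exactly: the block-triangular linearization, the reduction to signs of the principal eigenvalues of $L_1$ and $L_2$, the variational characterization $\int_\Omega(\sigma-\tilde u)\psi^2>[\psi]_s^2$ for a test function supported in a small ball, and the recognition that the stability of $\tilde u$ for the scalar problem handles $L_1$. (Your displayed computation for $L_1$ has a sign slip---$\int|\nabla\tilde u|^2-\int(\sigma-2\tilde u)\tilde u^2=+\int\tilde u^3$, not $-\int\tilde u^3$---but you correctly fall back on the energy-minimizer argument, which is what the paper uses via~\eqref{MiM}.)

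Where your proposal diverges from the paper is in the construction, and here there is a genuine gap. You propose making the spike ``tall enough'' and then appeal to an oscillation/averaging heuristic to argue that $\tilde u$ cannot track $\sigma$. But you correctly identify that this is exactly the obstacle, and you do not resolve it: for $\sigma=M\chi_{B_r}$ with $r$ fixed and $M\to\infty$, the solution $\tilde u$ saturates to $M$ on compact subsets of $B_r$, so $\sigma-\tilde u$ does \emph{not} become large there. The paper never attempts to quantify this averaging gap. Instead, both of its constructions exploit the \emph{scaling mismatch} between the exponents $2$ and $2s$: the fractional Poincar\'e constant on $B_r$ scales like $r^{-2s}$, while the relevant classical quantities scale like $r^{-2}$ (or $r^{-2s'}$ for any $s'\in(s,1)$). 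In the rescaling construction (Section~\ref{subsec:ra}) one fixes \emph{any} $\sigma$ with a nontrivial $\tilde u$, notes that $\sigma-\tilde u\ge c_0$ on some ball by continuity, and then rescales $\Omega\mapsto\lambda^{-1/2}\Omega$, $\sigma\mapsto\lambda\sigma(\sqrt\lambda\cdot)$: the leftover becomes $c_0\lambda$ while the fractional threshold is only $\sim\lambda^s$. In the branching construction (Section~\ref{subsec:cpt}) one takes $\sigma_\tau=\tau\chi_{B_r}$ with $\tau$ just \emph{above} the bifurcation value $\underline\tau(x_0,r)$, so that $\tilde u_\tau\to 0$ uniformly and hence $\sigma_\tau-\tilde u_\tau\approx\tau\approx\underline\tau\gtrsim r^{-2s'}$, which beats $r^{-2s}$ for small $r$. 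In both cases the obstacle is circumvented rather than confronted: either $\tilde u$ is kept small by staying near the bifurcation, or the fixed gap $c_0$ is amplified by rescaling. The ``huge variation'' heuristic you invoke is the right intuition for why such $\sigma$ should exist, but the mechanism that closes the argument is the exponent gap $s<1$, not an elliptic averaging estimate.
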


The existence of distributions of resource $\sigma$ which support the phenomenon described in Theorem \ref{MAIN} is motivated by concrete models. To be precise, in this paper we examine two particular cases: 
\begin{enumerate}
\item[1.] a rescaled resource $\sigma_\lambda$ on a sufficiently small domain $\Omega_\lambda$ in Section \ref{subsec:ra};
\item[2.] a large multiple of the characteristic function of a ball in Section \ref{subsec:cpt}.
\end{enumerate}

We remark that Theorem~\ref{MAIN} states that~$\tilde u$
is a linearly stable solution of the autonomous,
scalar Fisher-KPP equation in~\eqref{90X}, but~$(\tilde u,0)$
is a linearly unstable equilibrium for the system in~\eqref{90}.
More explicitly, the positive eigenvalue of the linearized system
takes into account the fact that if the density of the first
population undergoes a small variation without the appearance
of the second species, then the system has the tendency to return
to the original position. Conversely, the negative eigenvalue
shows that if a second population appears, then the system does
not go back to the original situation, and the second species
has indeed chances to survive and colonize the environment.

It is interesting to contrast this result with those obtained in \cite{Ka_Lou_Shen1}: when the dispersal rates are equal\footnote{In our model, we do not even take into account different dispersal rates $\mu$ and $\nu$.}, that is $\mu=\nu$, if the dispersal distance $\delta$ in \eqref{conv_model} is sufficiently small, then $v$ can invade the local population $u$ but $u$ cannot invade the nonlocal population $v$. This suggests, as a general principle, that the smaller spreader may be favored by evolution, especially in hostile environments. Our approach is rather different: in some sense, we consider the dispersal distance as already fixed (in the definition of fractional Laplacian) and we investigate the dependence of the possibility of an invasion based on the availability of the resource $\sigma$. Our attempt is to put in evidence the role of the environment (more precisely, the fact that Definition \ref{def:sscat} is fulfilled) in the selection of the dispersal strategy.

\medskip

Roughly speaking, the condition (in Definition \ref{def:sscat}) which 
allows the in\-sta\-bi\-li\-ty of the system records the fact that the 
first population, with local diffusion, cannot saturate the given 
resource and leaves enough ``leftovers'' for the second species to 
survive. 
\medskip

In this sense, a natural question is to
determine whether a population exhausts the resource.
For this, as a second result,
we provide an example
of a purely nonlocal phenomenon in population modeling. We
show that, fixed any
arbitrarily small~$\varepsilon>0$ and
given any resource~$\sigma\in
C^k(B_1,\,[0,+\infty))$, there exists a 
resource~$\sigma_\varepsilon\in
C^k(B_1,\,[0,+\infty))$ that is~$\varepsilon$-close to~$\sigma$
in the norm of~$C^k(B_1)$, a radius~$R_{\varepsilon,\sigma}>1$
and a function~$u_\varepsilon$ which vanishes outside~$B_{R_{\varepsilon,\sigma}}$,
which is~$s$-harmonic in~$B_1$,
which equals to~$\sigma_\varepsilon$ in~$B_1$ and which therefore
satisfies
$$ (-\Delta)^s u_\varepsilon =(\sigma_\varepsilon - u_\varepsilon)u_\varepsilon
\quad{\mbox{ in~$B_1$.}}$$
That is, up to an arbitrarily small error,
a nonlocal population can locally adapt to any given resource
(provided that the density of the population
is artificially and appropriately regulated in a suitable region).
The formal statement of this result goes as follows.

\begin{theorem}\label{NNC}
Let~$k\in\N$ and~$\sigma\in C^k(B_1,\,[0,+\infty))$.
Fix~$\varepsilon>0$.
Then there exists $\sigma_\varepsilon \in
C^k(B_1)$ with  
\begin{equation}\label{CU1}
\|\sigma-\sigma_\varepsilon\|_{C^k(B_1)}\le\varepsilon
\end{equation}
and there exist $R_{\varepsilon,\sigma}>1$ and~$u_\varepsilon \in C^k(B_1)\cap C^s(\R^n)$ such that
\begin{eqnarray}
&& u_\varepsilon (x) =\sigma_\varepsilon (x)\quad \text{for any }x\in B_1\label{CU4}\\
&& (-\Delta)^s u_\varepsilon(x)  =0 \quad \text{for any }x\in B_1\label{CU2}\\
&& u_\varepsilon (x) =0  \quad \text{for any }x\in \R^n\setminus B_{R_{\varepsilon,\sigma}}\,.\label{CU3}
\end{eqnarray}
In particular
\begin{equation}\label{CU5}
(-\Delta)^s u_\varepsilon(x) =(\sigma_\varepsilon(x) - u_\varepsilon(x))\,u_\varepsilon(x)
\quad \text{for any }x\in B_1\,.
\end{equation}
\end{theorem}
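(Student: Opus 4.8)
The plan is to obtain Theorem~\ref{NNC} as a consequence of the density of compactly supported $s$-harmonic functions in $C^k(\overline{B_1})$, which is the only genuinely nonlocal ingredient; everything else is bookkeeping. I would begin by reducing the statement. First, \eqref{CU5} is a formal consequence of \eqref{CU2} and \eqref{CU4}: on $B_1$ we have $u_\varepsilon=\sigma_\varepsilon$, hence $(\sigma_\varepsilon-u_\varepsilon)\,u_\varepsilon\equiv 0\equiv(-\Delta)^s u_\varepsilon$. Second, once we have produced a function $u_\varepsilon$ that is $s$-harmonic in $B_1$, vanishes outside some $B_{R_{\varepsilon,\sigma}}$ with $R_{\varepsilon,\sigma}>1$, and satisfies $\|\sigma-u_\varepsilon\|_{C^k(B_1)}\le\varepsilon$, it is enough to \emph{define} $\sigma_\varepsilon:=u_\varepsilon|_{B_1}$: then \eqref{CU2} and \eqref{CU3} hold by construction, \eqref{CU4} is the definition of $\sigma_\varepsilon$, and \eqref{CU1} becomes $\|\sigma-\sigma_\varepsilon\|_{C^k(B_1)}=\|\sigma-u_\varepsilon\|_{C^k(B_1)}\le\varepsilon$. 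There is no real loss in treating $\sigma$ as an element of $C^k(\overline{B_1})$, possibly after an inner mollification that absorbs an $\varepsilon/2$.

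The engine is the approximation theorem of Dipierro--Savin--Valdinoci (``All functions are locally $s$-harmonic up to a small error''): for every $f\in C^k(\overline{B_1})$ and every $\delta>0$ there exist $R>1$ and $w\in C^s(\R^n)$, with $w$ smooth in $B_1$, such that $(-\Delta)^s w=0$ in $B_1$, $w\equiv 0$ in $\R^n\setminus B_R$, and $\|f-w\|_{C^k(\overline{B_1})}\le\delta$. Granting this, I would apply it with $f:=\sigma$ and $\delta:=\varepsilon$ (or $\varepsilon/2$ after the reduction above), set $u_\varepsilon:=w$, $R_{\varepsilon,\sigma}:=R$, and $\sigma_\varepsilon:=u_\varepsilon|_{B_1}$, and conclude via the reduction. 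The regularity $u_\varepsilon\in C^k(B_1)\cap C^s(\R^n)$ follows from interior regularity of $s$-harmonic functions (smoothness in $B_1$) together with the global $C^s$ regularity of the solution of the exterior Dirichlet problem with bounded, compactly supported data. If one insists that $\sigma_\varepsilon$ remain nonnegative, as the ``resource'' reading suggests, note that $\sigma_\varepsilon\ge\sigma-\varepsilon$, which is harmless where $\sigma$ is bounded away from $0$; in any case the formal statement only requires $\sigma_\varepsilon\in C^k(B_1)$.

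The entire difficulty is thus concentrated in the quoted approximation result, and that is what I expect to be the main obstacle if a self-contained treatment were wanted. Its content has no classical counterpart: for $s=1$ harmonic functions are real-analytic and therefore cannot approximate an arbitrary $C^k$ profile, whereas the nonlocal tail of $(-\Delta)^s$ allows exactly this flexibility. The route I would follow to prove it (and which I would only sketch, deferring to the reference for details) is: (i) by Taylor's formula and linearity it suffices to approximate monomials on $B_1$; (ii) for a fixed monomial one seeks an $s$-harmonic function in $B_1$ among those arising as the solution of $(-\Delta)^s u=0$ in $B_1$, $u=g$ in $\R^n\setminus B_1$, for $g$ smooth and supported in an annulus $B_R\setminus B_1$; (iii) one shows that, as $g$ ranges over such data and $R$ over $(1,\infty)$, the corresponding solutions are dense in $C^k(\overline{B_1})$, by a Runge-type / Hahn--Banach duality argument in which a functional annihilating all such solutions is forced to vanish via the representation by the Poisson kernel of the ball together with unique continuation for the fractional Laplacian. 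Assembling these pieces yields the theorem; the present contribution is the biological reinterpretation recorded in \eqref{CU5}.
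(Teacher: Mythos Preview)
Your proposal is correct and follows essentially the same route as the paper: invoke the Dipierro--Savin--Valdinoci density result to produce a compactly supported $s$-harmonic approximant $u_\varepsilon$ of $\sigma$ in $C^k(B_1)$, then \emph{define} $\sigma_\varepsilon:=u_\varepsilon|_{B_1}$ so that \eqref{CU1}--\eqref{CU4} are immediate and \eqref{CU5} holds trivially. Your additional remarks (the sketch of the DSV argument, the nonnegativity comment, the inner-mollification reduction) go beyond what the paper records, but the core argument is identical.
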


It is worth mentioning that Theorem~\ref{NNC}
heavily relies on the nonlocal feature of the equation and
it does not have any local counterpart
(this will be clearly explained in Section~\ref{sec:cur}). Let us stress the fact that Theorem~\ref{NNC}
does not prove (and cannot prove,
since this would be false in general)
that a nonlocal population always exhausts completely
the resource, since a small error~$\varepsilon$ has
to be taken into account. In a sense,
the solution given by Theorem~\ref{NNC} is different
than the original one, since it does not attain the
homogeneous Dirichlet boundary datum: it has prescribed,
non-homogeneous (but compactly supported)
Dirichlet boundary datum outside the strategic region in which
the equation is satisfied\footnote{The fact that the Dirichlet boundary condition is not homogeneous reflects mathematically the practical condition of performing an effective distribution plan for the population outside the strategic region.}.

We observe that Theorem~\ref{NNC}
has important (though socially
not embraceable!) practical consequences.
For instance, a given population may have a strong intention to consume
all the given resource in a region of particular strategic importance
(say, a region contained in
the ball~$B_1$ in our example). Indeed, in concrete cases,
this strategic area might be favorable for generating a new competing
species, or might be easily accessible
by a similar population coming from abroad which can be
considered dangerous or undesired
by the local population,
and the possible leftover of the resource might obviously favor the
newcomers. For these reasons, a ``socially conservative''
(and rather unkind!)
population may wish to avoid to leave available resources
in strategic regions which can be used by unwanted competitors.

The result in Theorem~\ref{NNC} says,
roughly speaking, that in this case, a nonlocal population is able to
find a suitable, somehow ``artificial'', distribution of population far
away in order to consume the resource in the strategic region and thus
penalize the newcomers (viceversa, a local population cannot do that).

Notice that this suitable distribution of
the conservative population may require a modification of the conditions
far away: indeed the supporting ball $B_{R_{\varepsilon,\sigma}}$ in
Theorem~\ref{NNC}
may become larger and larger for small $\varepsilon$:
that is, in a sense, the conservative population may need to change its
plan ``close to infinity'' in order to consume more efficiently the
inner resource (in this sense, a ``global plan'' for the
population distribution is in order, and it is indeed
conceivable that an optimal use of resources may involve
strategic plans on the distribution of the population
in the large).

\medskip

The rest of this paper is organized as follows.
In Section~\ref{sec:model} we recall the basic notation
about the population dynamics model that we study.
The linearized dynamics of the system is then analyzed in
Section~\ref{sec:ciccia}, where we will also give
two examples that establish Theorem~\ref{MAIN}.
Finally, in Section~\ref{sec:cur} we will prove
Theorem~\ref{NNC} and show that it is a new phenomenon,
which only arises in nonlocal dispersion models.

\section{Biological models and mathematical tools}\label{sec:model}
\subsection{Population dynamics}

Let us denote by $u,v:[0,T)\times\Omega\to[0,+\infty)$ the densities of two species coexisting in the same domain $\Omega$ and competing for a common resource $\sigma:\Omega\to\R$. Here and in the rest of the paper we consider as a domain an open, bounded set $\Omega\subset\R^n$ with Lipschitz boundary $\partial\Omega$.
The resource $\sigma$ belongs to the space of measurable, essentially bounded functions $L^\infty(\Omega)$. 
We study the linear
stability of a stationary solution of the reaction-diffusion system with 
Dirichlet boundary conditions
\begin{equation}\label{syst_dyn}
\left\{
\begin{array}{ll}
u_t=\hphantom{-(-)^s}\Delta u +\left(\sigma-(u+v)\right)u\quad &\text{in }[0,T)\times\Omega\\
v_t=-(-\Delta)^s v\,+\left(\sigma-(u+v)\right)v\quad &\text{in }[0,T)\times\Omega\\
u(t,\cdot)=0 & \text{on }\partial\Omega,\,\forall\,t\in[0,T)\\
v(t,\cdot)=0 & \text{in }\R^n\setminus\Omega,\,\forall\,t\in[0,T)\,.
\end{array}
\right.
\end{equation}
For this, we perform a formal linearization around a stationary point $(\tilde u,0)$ of \eqref{syst_dyn} and then we focus only on the corresponding
linearized system, that is
\[
\left\{
\begin{array}{ll}
\hphantom{(}-\Delta u\hphantom{)^s}\!\!\!=(\sigma-2\tilde u)u-\tilde uv \quad & {\rm in}\ \Omega\\
(-\Delta)^sv=(\sigma-\tilde u)v \quad & {\rm in}\ \Omega\\
\hphantom{(}u=0 \quad & {\rm on}\ \partial\Omega\\
\hphantom{(}v=0 \quad & {\rm in}\ \R^n\setminus\Omega\,.
\end{array}
\right.
\]
\begin{remark}{\rm
Though the global dynamics is beyond the scope of this paper, we recall that there is a detailed and specialized
literature about the well-posedness of the initial value problem associated to \eqref{syst_dyn} with $u(0,x)=u_0(x)$ and $v(0,x)=v_0(x)$ for some given functions $u_0,v_0$ (see \cite{Mo} for the well-posedness of the problem in $C^k(\overline\Omega)$, for instance). The analysis of the global dynamics of semilinear parabolic systems is performed through the theory of Monotone Dynamical Systems (see \cite{Hirsch}, \cite{Hirsch2} and \cite{Smith}). For the fractional Fisher-KPP equation one should see \cite{Ca_Ro1} and \cite{Be_Ro_Ro}, for instance. See also Section \ref{sec:glob} for further remarks.
}\end{remark}
Before focusing on the aforementioned linearized system, let us recall some useful definitions and facts about the pseudodifferential operator $(-\Delta)^s$ that is involved in \eqref{syst_dyn}.


\subsection{The nonlocal dispersive strategy and the fractional Laplacian}\label{subsec:fl}

Consider an open set $\Omega\subset\R^n$ and $s\in(0,1)$, the Gagliardo seminorm of a measurable function $u$ is defined as
\[
[u]_{H^s(\R^n)}:=\left(\int_{\R^n} \int_{\R^n}
\frac{|u(x)-u(y)|^2}{|x-y|^{n+2s}}\,dx\,dy\right)^{\frac 12}\,.
\]
The fractional Sobolev space that we denote here~$H^s_0(\Omega)$
is the linear set containing all the measurable functions~$u:\R^n\to\R$
such that:
\begin{itemize}
\item $\|u\|_{L^2(\Omega)}<+\infty$,
\item $[u]_{H^s(\R^n)}<+\infty$, and
\item $u(x)=0$ for a.e.~$x\in\R^n\setminus\Omega$.
\end{itemize}
The Gagliardo seminorm is naturally related to the fractional Laplacian,
since
\[
(-\Delta)^su(x):=\left(\frac{\Gamma(n/2+s)}{\pi^{2s+n/2}\Gamma(-s)}\right)\lim_{\varepsilon\to 0}\int_{\R^n\setminus B_\varepsilon(x)}\frac{u(x)-u(y)}{|x-y|^{n+2s}}\,dy\,,
\]
where~$\Gamma$ is the Euler's function.
For an introduction to the fractional Laplacian and the fractional
Sobolev spaces see for instance~\cite{Di_Nez_Pa_Val}. 
In our framework,
the scalar version of \eqref{syst_dyn}, that is
\[
v_t=-(-\Delta)^sv+(\sigma-v)v\,,
\] 
is known as Fisher-KPP equation with fractional diffusion and for the many established results one can see, for instance, \cite{Ca_Ro1} and \cite{Stan_Vaz}.

In this section we summarize the results needed in this paper only. 
\begin{theorem}[Fractional Poincar\'e-Sobolev embedding theorem]\label{thm_sobineq}
Fix $s\in(0,1)$ 
and an open bounded set $\Omega\subset\R^n$ with Lipschitz boundary. There exists a positive constant $C_\sharp=C_\sharp(s,\Omega)$ such that
\begin{equation}\label{fsi}
\forall\,\phi\in H^s_0(\Omega)\,,\quad\|\phi\|^2_{L^2(\Omega)}\le C_\sharp[\phi]^2_{H^s(\R^n)}\,.
\end{equation}
This means that $H^s_0(\Omega)$ is continuously embedded in $L^2(\Omega)$.
\end{theorem}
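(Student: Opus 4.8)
The plan is to establish the inequality~\eqref{fsi} directly, exploiting the fact that every $\phi\in H^s_0(\Omega)$ vanishes identically outside $\Omega$; the asserted continuity of the embedding $H^s_0(\Omega)\hookrightarrow L^2(\Omega)$ is then an immediate byproduct, since~\eqref{fsi} says precisely that the inclusion map is bounded when $H^s_0(\Omega)$ is endowed with the norm $\|\phi\|_{L^2(\Omega)}+[\phi]_{H^s(\R^n)}$ (indeed,~\eqref{fsi} even shows that $[\,\cdot\,]_{H^s(\R^n)}$ alone is an equivalent norm on $H^s_0(\Omega)$). Note that only the boundedness of $\Omega$, and not the Lipschitz regularity of $\partial\Omega$, is actually needed for this statement.

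First I would fix $r>0$ so large that $\Omega\subset B_r$ and introduce the annulus $A:=B_{2r}\setminus B_r$. Since $A\cap\Omega=\emptyset$, the zero extension forces $\phi\equiv 0$ on $A$ for every $\phi\in H^s_0(\Omega)$. Moreover, for any $x\in\Omega$ and $y\in A$ one has the elementary bound $|x-y|\le|x|+|y|<3r$, whence $|x-y|^{-(n+2s)}\ge (3r)^{-(n+2s)}$. Restricting the double integral defining $[\phi]^2_{H^s(\R^n)}$ to the product region $\Omega\times A$ and using $\phi|_A\equiv 0$ gives
\[
[\phi]^2_{H^s(\R^n)}\ \ge\ \int_\Omega\int_A\frac{|\phi(x)-\phi(y)|^2}{|x-y|^{n+2s}}\,dy\,dx\ =\ \int_\Omega|\phi(x)|^2\left(\int_A\frac{dy}{|x-y|^{n+2s}}\right)dx\ \ge\ \frac{|A|}{(3r)^{n+2s}}\,\|\phi\|^2_{L^2(\Omega)},
\]
where $|A|=\omega_n r^n(2^n-1)$ is the Lebesgue measure of the annulus and $\omega_n$ denotes the volume of the unit ball. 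This proves~\eqref{fsi} with the explicit constant $C_\sharp=\dfrac{(3r)^{n+2s}}{\omega_n r^n(2^n-1)}=\dfrac{3^{n+2s}r^{2s}}{\omega_n(2^n-1)}$, which depends only on $n$, $s$ and (through $r$) on $\Omega$.

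There is essentially no serious obstacle here; the only point requiring a little care is the choice of the ``far away'' set $A$: it must be disjoint from $\Omega$, so that the zero extension forces $\phi|_A\equiv 0$, yet it must remain within bounded distance of $\Omega$, so that $|x-y|^{-(n+2s)}$ admits a strictly positive lower bound on $\Omega\times A$ — an annulus enclosing $\Omega$ does both. An alternative, less elementary route would be to invoke the full fractional Sobolev inequality $\|\phi\|_{L^{2^*_s}(\R^n)}\le C[\phi]_{H^s(\R^n)}$ with $2^*_s=\tfrac{2n}{n-2s}$ (valid in our range, since $n\ge 2$ and $s<1$ force $2s<2\le n$) and then apply H\"older's inequality on the bounded set $\Omega$; this yields~\eqref{fsi}, and in fact the sharper embedding $H^s_0(\Omega)\hookrightarrow L^{2^*_s}(\Omega)$, but at the cost of a non-trivial black box. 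I would present the self-contained argument above, which suffices for the uses of Theorem~\ref{thm_sobineq} made in the sequel.
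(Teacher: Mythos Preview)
Your proof is correct and takes a genuinely different route from the paper's. The paper argues by contradiction and compactness: normalizing a hypothetical violating sequence~$\psi_k$ with~$\|\psi_k\|_{L^2}=1$ and~$[\psi_k]_{H^s(\R^n)}\to 0$, it invokes the compact embedding of~$H^s(\Omega)$ into~$L^2(\Omega)$ (Theorem~7.1 of~\cite{Di_Nez_Pa_Val}) to extract an~$L^2$-limit~$\psi$, then uses Fatou's lemma to conclude~$[\psi]_{H^s(\R^n)}=0$, so~$\psi$ is constant, hence identically zero, contradicting~$\|\psi\|_{L^2}=1$. Your argument, by contrast, is entirely direct and elementary: you simply lower-bound the Gagliardo integral by its restriction to~$\Omega\times A$, where the annulus~$A$ lies in the zero set of~$\phi$ and at bounded distance from~$\Omega$. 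What your approach buys is an explicit constant with the correct~$r^{2s}$ scaling (cf.\ Remark~\ref{rmk:rs}), no appeal to any compactness black box, and a transparent confirmation of your remark that the Lipschitz hypothesis on~$\partial\Omega$ is irrelevant here---only boundedness is used. The paper's route, while less sharp in this respect, has the pedagogical merit of rehearsing the compactness mechanism that underlies much of the later analysis; it is also the standard template for Poincar\'e-type inequalities when no explicit constant is sought.
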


\begin{proof}
We give the proof, which is of classical flavor, for the facility
of the reader. We argue by contradiction, supposing that
there exists a sequence~$\phi_k\in H^s_0(\Omega)$
such that~$\|\phi_k\|_{L^2(\Omega)}\ge k[\phi_k]_{H^s(\R^n)}$.
We define
$$ \psi_k := \frac{\phi_k}{\|\phi_k\|_{L^2(\Omega)} }.$$
Then~$\psi_k\in H^s_0(\Omega)$ and
\begin{equation}\label{p0o1}
\left( \int_{\R^n}\int_{\R^n} \frac{|\psi_k(x)-\psi_k(y)|^2}{
|x-y|^{n+2s}}\,dx\,dy\right)^{\frac12}=
[\psi_k]_{H^s(\R^n)} =
\frac{ [\phi_k]_{H^s(\R^n)} }{ \|\phi_k\|_{L^2(\Omega)} }\le
\frac{1}{k}.\end{equation}
Therefore
$$ \left(\int_{\Omega}\int_{\Omega} \frac{|\psi_k(x)-\psi_k(y)|^2}{
|x-y|^{n+2s}}\,dx\,dy\right)^{\frac12}\le \frac{1}{k}.$$
Also, $\|\psi_k\|_{L^2(\Omega)}=1$.
Therefore, by compactness (see e.g. Theorem~7.1
in~\cite{Di_Nez_Pa_Val}, used here with~$p=q=2$), we obtain that,
up to a subsequence, $\psi_k$ converges to some~$\psi$
in~$L^2(\Omega)$ and a.e. in~$\Omega$. Defining~$\psi(x):=0$
for any~$x\in\R^n\setminus\Omega$, we have that~$\psi_k=\psi=0$
a.e. in~$\R^n\setminus\Omega$, and consequently~$\psi_k$ converges
to~$\psi$ a.e. in~$\R^n$.

Thus, by taking the limit in~\eqref{p0o1}
and using Fatou's Lemma,
\begin{eqnarray*}&&\int_{\R^n}\int_{\R^n} \frac{|\psi(x)-\psi(y)|^2}{
|x-y|^{n+2s}}\,
dx\,dy\le
\liminf_{k\to+\infty}
\int_{\R^n}\int_{\R^n} \frac{|\psi_k(x)-\psi_k(y)|^2}{
|x-y|^{n+2s}}\,
dx\,dy\\&&\qquad\le \liminf_{k\to+\infty}\frac{1}{k^2}=0.\end{eqnarray*}
Accordingly~$\psi$ must be constant in~$\R^n$ and therefore
identically equal to zero (up to sets of null measure).
This implies that
$$ 1= \lim_{k\to+\infty} \|\psi_k\|_{L^2(\Omega)}
=\lim_{k\to+\infty} \|\psi_k-\psi\|_{L^2(\Omega)}=0.$$
This is a contradiction and it proves the desired result.
\end{proof}

In the following, we will always assume $C_\sharp(s,\Omega)$ to be the sharp constant such that \eqref{fsi} holds, namely
\begin{equation}\label{CP}
C_\sharp^{-1}(s,\Omega) =
\inf_{\substack{\phi\in H^s_0(\Omega) \\ \phi\not\equiv0}}
\frac{[\phi]^2_{H^s(\R^n)}}{\|\phi\|^2_{L^{2}(\Omega)} } =
\inf_{\substack{\phi\in H^s_0(\Omega) \\ \phi\not\equiv0}}
\frac{[\phi]^2_{H^s(\R^n)}}{\|\phi\|^2_{L^{2}(\R^n)} }\,.
\end{equation}

\begin{remark}\label{rmk:rs}{\rm
If~$r>0$ and~$\phi\in H^s_0(B_1)$, one can consider
the rescaled function~$\phi_r(x):=r^{-n/2}\phi(x/r)$.
Then~$\phi_r$ vanishes a.e. outside~$B_r$. Moreover,
$\|\phi_r\|_{L^{2}(\R^n)}=\|\phi\|_{L^{2}(\R^n)}$
and~$ [\phi_r]_{H^s(\R^n)}=r^{-s} [\phi]_{H^s(\R^n)}$.
Accordingly,
\[
C_\sharp(s,B_r)=r^{2s} C_\sharp(s,B_1)\,.
\] 
}\end{remark}


\subsection{Linearization of the system}\label{sec:lin}

Let $\Omega\subset\R^n$ and $\sigma\in L^\infty(\Omega)$ be as in Section \ref{sec:model}. Our purpose is a qualitative study of an equilibrium state of the following system
\begin{equation}\label{syst}
\left\{
\begin{array}{l}
u_t=\hphantom{-(-}\Delta u\hphantom{)^s}\!+\left(\sigma-(u+v)\right)u\\
v_t=-(-\Delta)^sv+\left(\sigma-(u+v)\right)v
\end{array}
\right.
\end{equation}
More precisely, we look for an equilibrium state of the form $(\tilde u,0)$ with $\tilde u\in H^1_0(\Omega)$ and $\tilde u\ge 0$.

For the sake of completeness, we also investigate the existence of an equilibrium state of the form $(0,\tilde v)$, with $\tilde v\in H^s_0(\Omega)$ and $\tilde v\ge 0$. The linearization of \eqref{syst} at $(0,\tilde v)$ and further conclusions are postponed to Section \ref{sec:cur}.

\begin{definition}\label{def:nontriv}{\rm
Given a bounded function~$\sigma:\Omega\to[0+\infty)$,
we say that~$\sigma$ satisfies a 
reverse Poincar\'e-Sobolev condition if
\begin{equation}\label{poscond}
\sup_{u\in H^1_0(\Omega)}
\int_\Omega\sigma(x)u(x)^2\,dx-\int_\Omega|\nabla u|^2\,dx>0\,.
\end{equation}
Furthermore, $\sigma$ satisfies a reverse fractional Poincar\'e-Sobolev condition with parameter $s$ if
\begin{equation}\label{poscond_frac}
\sup_{v\in H^s_0(\Omega)}\int_\Omega\sigma(x)v(x)^2\,dx-[v]^2_{H^s(\R^n)}>0\,.
\end{equation}
}\end{definition}

In order to make computations easier, we give a sufficient
condition that ensures~\eqref{poscond}.

\begin{lemma}\label{COMP}
Let~$\lambda_1(\Omega)$ be the first eigenvalue of
the Laplacian in~$\Omega$ with Dirichlet boundary
condition and let~$\phi_1\in H^1_0(\Omega)$ be the
corresponding eigenfunction.
If
\begin{equation}\label{hp_exist} \lambda_1(\Omega)\int_\Omega
\phi_1(x)^2\,dx<\int_\Omega\sigma(x)\phi_1(x)^2\,dx\,, \end{equation}
then the reverse Poincar\'e-Sobolev condition in~\eqref{poscond} is satisfied.
\end{lemma}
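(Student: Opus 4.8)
The plan is to exhibit an explicit test function realizing the supremum in~\eqref{poscond} as strictly positive, using the first Dirichlet eigenfunction~$\phi_1$. Recall that~$\phi_1\in H^1_0(\Omega)$ satisfies $-\Delta\phi_1=\lambda_1(\Omega)\phi_1$ with homogeneous Dirichlet data, and by the variational characterization of~$\lambda_1$ one has the Rayleigh-quotient identity
\begin{equation*}
\int_\Omega|\nabla\phi_1(x)|^2\,dx=\lambda_1(\Omega)\int_\Omega\phi_1(x)^2\,dx.
\end{equation*}
This is just integration by parts (valid since~$\phi_1\in H^1_0(\Omega)$ and~$\partial\Omega$ is Lipschitz) applied to the eigenvalue equation, or equivalently the statement that~$\phi_1$ attains the infimum defining~$\lambda_1$.

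The key step is then a direct substitution: take~$u=\phi_1$ in the supremum appearing in~\eqref{poscond}. The quantity inside the supremum becomes
\begin{equation*}
\int_\Omega\sigma(x)\phi_1(x)^2\,dx-\int_\Omega|\nabla\phi_1(x)|^2\,dx
=\int_\Omega\sigma(x)\phi_1(x)^2\,dx-\lambda_1(\Omega)\int_\Omega\phi_1(x)^2\,dx,
\end{equation*}
and by the hypothesis~\eqref{hp_exist} this last expression is strictly positive. Since the supremum over all of~$H^1_0(\Omega)$ is at least as large as the value attained by this particular choice~$u=\phi_1$, we conclude that the supremum in~\eqref{poscond} is~$>0$, which is exactly the reverse Poincar\'e-Sobolev condition. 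This completes the argument.

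There is essentially no obstacle here: the proof is a one-line verification once one recalls the Rayleigh quotient identity for~$\lambda_1$. The only points requiring a word of care are that~$\phi_1$ is an admissible competitor in the supremum (it lies in~$H^1_0(\Omega)$ by definition, and it is not identically zero, so the substitution is meaningful) and that the eigenfunction can be taken with a definite sign so that~$\phi_1^2$ is a genuine nonnegative weight — though in fact the sign of~$\phi_1$ is irrelevant for the stated inequality, since only~$\phi_1^2$ and~$|\nabla\phi_1|^2$ enter. One might also remark, as a sanity check, that the hypothesis~\eqref{hp_exist} is morally saying that~$\sigma$ exceeds~$\lambda_1(\Omega)$ "on average against the ground-state weight", which is the natural smallness-of-domain or largeness-of-resource condition guaranteeing that the Fisher-KPP equation~\eqref{90X} has a nontrivial nonnegative steady state~$\tilde u$.
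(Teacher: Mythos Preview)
Your argument is correct and matches the paper's proof essentially line for line: both use the eigenfunction identity $\int_\Omega|\nabla\phi_1|^2=\lambda_1(\Omega)\int_\Omega\phi_1^2$ and then substitute $u=\phi_1$ into~\eqref{poscond} to conclude directly from~\eqref{hp_exist}. The extra remarks you include about admissibility and the sign of $\phi_1$ are fine but not needed for the proof.
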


\begin{proof} By construction
\begin{equation*} \left\{ \begin{array}{ll}
-\Delta\phi_1=\lambda_1(\Omega)\phi_1\quad & \text{in }\Omega\\ \phi_1=0
& \text{on }\partial\Omega\,, \end{array} \right.\end{equation*}
and so, by~\eqref{hp_exist},
$$ \int_\Omega |\nabla\phi_1(x)|^2\,dx=
\lambda_1(\Omega)\int_\Omega
\phi_1(x)^2\,dx<\int_\Omega\sigma(x)\phi_1(x)^2\,dx$$
which proves~\eqref{poscond}.\end{proof}

\begin{remark}\label{0oUU} {\rm It is worth noticing that
condition~\eqref{hp_exist}
is satisfied, for a fixed domain~$\Omega$,
for any resource~$\sigma$ that is sufficiently large
in an open subset of~$\Omega$.
Hence, fixed~$\Omega$, there are many examples
of smooth resources satisfying~\eqref{hp_exist}
and therefore~\eqref{poscond}.}
\end{remark}

\begin{remark} {\rm 
We also observe that the converse of Lemma~\ref{COMP} does not hold true, 
i.e. the reverse Poincar\'e-Sobolev condition in~\eqref{poscond} does not 
necessarily imply~\eqref{hp_exist}: as an example, one may consider 
$\Omega=(0,\pi)$, $\sigma(x) = \varepsilon^{-29/10}\chi_{ (0,\varepsilon)}(x)$ 
and~$u(x)=|x|^{2/3}$, with~$\varepsilon>0$ suitably small. Then~$u\in 
H^1_0(\Omega)$ and~\eqref{poscond} holds true, since 
\begin{eqnarray*}
&& \int_\Omega\sigma(x)u(x)^2\,dx-\int_\Omega|\nabla u|^2\,dx = 
\varepsilon^{-29/10} \int_0^{\varepsilon}x^{4/3}\,dx -\frac49\int_0^\pi
x^{-2/3}\,dx\\&&\qquad=\frac{3}{7} \varepsilon^{-29/10} \varepsilon^{7/3} 
-\frac{4\pi^{1/3}}{3}>0.\end{eqnarray*} On the other hand,
in this case~$\phi_1(x)=\sin x$, $\lambda_1(\Omega)=1$,
and
\begin{align*} 
\hphantom{=} & \,\lambda_1(\Omega)\int_\Omega \phi_1(x)^2\,dx-
\int_\Omega\sigma(x)\phi_1(x)^2\,dx\\
= & \,\int_0^\pi \sin^2 x\,dx -\varepsilon^{-29/10} \int_0^\varepsilon 
\sin^2 x\,dx \\
= & \,\frac{\pi}{2}
-\frac{\varepsilon^{-29/10}}{2}\big( \varepsilon-\sin\varepsilon\cos\varepsilon\big)\,.
\end{align*}
Thus, since, by a Taylor expansion,
$$ \sin\varepsilon\cos\varepsilon = (\varepsilon +O(\varepsilon^3))(1+O(\varepsilon^2))
= \varepsilon +O(\varepsilon^3)$$
it follows that
$$ \varepsilon^{-29/10} \big( \varepsilon-\sin\varepsilon\cos\varepsilon\big)
= O(\varepsilon^{1/10}) $$
and so
$$ \lambda_1(\Omega)\int_\Omega \phi_1(x)^2\,dx-
\int_\Omega\sigma(x)\phi_1(x)^2\,dx =\frac{\pi}{2}-O(\varepsilon^{1/10})
>0,$$
which shows that~$\phi_1$
does not satisfy~\eqref{hp_exist}.
}\end{remark}

The fractional equivalent of Lemma \ref{COMP} is stated in the following lemma. We omit its proof, which would be a repetition of the proof of Lemma \ref{COMP}.  
\begin{lemma}\label{lem:comp_frac}
Let $\phi_s\in H^s_0(\Omega)$ a minimizer for the Rayleigh quotient \eqref{CP}. If 
\[
C_\sharp^{-1}(s,\Omega)\|\phi_s\|^2_{L^2(\Omega)}<\int_\Omega\sigma(x)\phi_s(x)^2\,dx\,,
\]
then the reverse fractional Poincar\'e-Sobolev condition with parameter $s$ in \eqref{poscond_frac} is satisfied.
\end{lemma}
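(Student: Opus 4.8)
The statement to prove is Lemma~\ref{lem:comp_frac}, which is explicitly noted by the authors to have the same proof as Lemma~\ref{COMP}. So the proof plan is a direct transcription of the Laplacian argument to the fractional setting.

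\medskip

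\noindent\textbf{Proof plan for Lemma~\ref{lem:comp_frac}.}

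The plan is to mimic exactly the proof of Lemma~\ref{COMP}, replacing the Dirichlet eigenpair $(\lambda_1(\Omega),\phi_1)$ of $-\Delta$ by the minimizing pair for the fractional Rayleigh quotient in~\eqref{CP}. First I would record that, since $\phi_s\in H^s_0(\Omega)$ is a minimizer of the quotient $[\phi]^2_{H^s(\R^n)}/\|\phi\|^2_{L^2(\Omega)}$, it realizes the infimum, so that
\[
[\phi_s]^2_{H^s(\R^n)} = C_\sharp^{-1}(s,\Omega)\,\|\phi_s\|^2_{L^2(\Omega)}.
\]
(This is the fractional analogue of the identity $\int_\Omega|\nabla\phi_1|^2 = \lambda_1(\Omega)\int_\Omega\phi_1^2$ used in the local case; one does not even need the Euler--Lagrange equation $(-\Delta)^s\phi_s = C_\sharp^{-1}\phi_s$, only that the infimum is attained at $\phi_s$.)

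Next, using the hypothesis $C_\sharp^{-1}(s,\Omega)\|\phi_s\|^2_{L^2(\Omega)} < \int_\Omega \sigma(x)\phi_s(x)^2\,dx$, I would substitute the displayed identity to get
\[
[\phi_s]^2_{H^s(\R^n)} = C_\sharp^{-1}(s,\Omega)\,\|\phi_s\|^2_{L^2(\Omega)} < \int_\Omega \sigma(x)\,\phi_s(x)^2\,dx,
\]
and therefore
\[
\int_\Omega \sigma(x)\,\phi_s(x)^2\,dx - [\phi_s]^2_{H^s(\R^n)} > 0.
\]
Since $\phi_s\in H^s_0(\Omega)$, this single admissible function already makes the supremum in~\eqref{poscond_frac} strictly positive, which is exactly the reverse fractional Poincar\'e--Sobolev condition with parameter $s$. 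This completes the argument.

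There is really no main obstacle here: the only point that deserves a moment's care is the existence of a minimizer $\phi_s$ for the quotient in~\eqref{CP}, which is implicitly assumed in the statement ("let $\phi_s\in H^s_0(\Omega)$ a minimizer"); existence follows by the direct method using the compact embedding $H^s_0(\Omega)\hookrightarrow L^2(\Omega)$ invoked in the proof of Theorem~\ref{thm_sobineq}, but since the lemma takes $\phi_s$ as given, I would simply use it. Everything else is the verbatim fractional translation of Lemma~\ref{COMP}, which is why the authors omit it.
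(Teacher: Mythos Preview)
Your proof is correct and is precisely the fractional transcription of the proof of Lemma~\ref{COMP} that the paper has in mind; the authors explicitly omit the argument because it ``would be a repetition of the proof of Lemma~\ref{COMP}'', and what you wrote is exactly that repetition.
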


\begin{remark}\label{rmk:mangime}{\rm As we noticed in Remark \ref{0oUU} for the local analog of the reverse fractional Poincar\'e-Sobolev condition, if the resource $\sigma$ is sufficiently abundant in the domain $\Omega$, then condition \eqref{poscond_frac} is satisfied. }
\end{remark}

The reverse Poincar\'e-Sobolev condition in~\eqref{poscond}
is a useful tool to obtain
non-trivial solution of the local stationary equation,
as stated in the following result.

\begin{theorem}\label{thm:sta_sol}
Consider $s\in(0,1]$ and a bounded function $\sigma:\Omega\to[0,+\infty)$
satisfying either
\begin{itemize}
\item[{\rm (1)}] the reverse Poincar\'e-Sobolev condition in \eqref{poscond} (when $s=1$) or
\item[{\rm (2)}] the reverse fractional Poincar\'e-Sobolev condition in \eqref{poscond_frac} (when $s<1$).
\end{itemize}
Then there exists a non-trivial, non-negative function $\tilde u\in H^s_0(\Omega)$ (i.e.~$\tilde u\ge 0$ and $\tilde u\not\equiv 0$) satisfying
\begin{equation}\label{eig0}
\left\{
\begin{array}{ll}
(-\Delta)^s\tilde u(x)=(\sigma(x)-\tilde u(x))\tilde u(x)\quad & \text{in }\Omega\\
\tilde u=0 & \text{on }\partial\Omega\,.
\end{array}
\right.
\end{equation}
\end{theorem}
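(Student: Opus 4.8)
The plan is to realize $\tilde u$ as a minimizer of the energy functional naturally attached to \eqref{eig0}. Writing $\mathcal E_s(u)$ for $\int_\Omega|\nabla u|^2\,dx$ when $s=1$ and for a fixed positive multiple of $[u]^2_{H^s(\R^n)}$ (the one making $\tfrac12\mathcal E_s$ the Dirichlet form of $(-\Delta)^s$ with the chosen normalization) when $s<1$, I would set
$$
J(u):=\frac12\,\mathcal E_s(u)-\frac12\int_\Omega\sigma(x)\,u(x)^2\,dx+\frac13\int_\Omega|u(x)|^3\,dx
$$
on $X:=H^s_0(\Omega)$ (so $X=H^1_0(\Omega)$ for $s=1$). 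Membership in $X$ already encodes the Dirichlet/exterior condition, so it is enough to exhibit a nontrivial nonnegative critical point of $J$: its Euler--Lagrange equation is $(-\Delta)^s u=\sigma u-|u|u$ in $\Omega$, which for $u\ge0$ is exactly \eqref{eig0}.

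The crux of the argument is to show that $J$ is bounded below and coercive on $X$. This is delicate precisely because, under \eqref{poscond}/\eqref{poscond_frac}, the quadratic part $\tfrac12\mathcal E_s(u)-\tfrac12\int_\Omega\sigma u^2$ is by design \emph{not} coercive --- indeed its infimum over $\{\|u\|_{L^2(\Omega)}=1\}$ is strictly negative. The cubic (absorbing) term is what saves the day: since $\sigma\in L^\infty(\Omega)$ and, by Hölder's inequality, $\int_\Omega|u|^3\ge|\Omega|^{-1/2}\|u\|_{L^2(\Omega)}^3$, putting $t:=\|u\|^2_{L^2(\Omega)}$ yields
$$
J(u)\ \ge\ \frac12\,\mathcal E_s(u)-\frac12\|\sigma\|_{L^\infty(\Omega)}\,t+\frac{1}{3|\Omega|^{1/2}}\,t^{3/2}.
$$
The function $t\mapsto-\tfrac12\|\sigma\|_{L^\infty(\Omega)}t+\tfrac{1}{3|\Omega|^{1/2}}t^{3/2}$ is bounded below on $[0,+\infty)$ and tends to $+\infty$, so $J$ is bounded below; and if $\mathcal E_s(u_k)\to+\infty$ then $J(u_k)\to+\infty$ (separating the cases in which $\|u_k\|_{L^2(\Omega)}$ does or does not stay bounded), i.e. $J$ is coercive. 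I expect this to be the only genuinely nontrivial point of the proof.

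Next I would check that $\inf_X J<0$, which forces any minimizer to differ from the trivial solution $u\equiv0$ (at which $J$ vanishes). Here the reverse (fractional) Poincaré--Sobolev condition enters: it provides $\phi\in X$ with $\int_\Omega\sigma\phi^2-\mathcal E_s(\phi)>0$, and then for $\tau>0$ small enough
$$
J(\tau\phi)=-\frac{\tau^2}{2}\Big(\int_\Omega\sigma\phi^2-\mathcal E_s(\phi)\Big)+\frac{\tau^3}{3}\int_\Omega|\phi|^3<0 .
$$
A standard application of the direct method then concludes: a minimizing sequence is bounded in $X$ by coercivity; passing to a subsequence it converges weakly in $X$, strongly in $L^2(\Omega)$ (by the compact embedding used in the proof of Theorem~\ref{thm_sobineq} for $s<1$, and Rellich's theorem for $s=1$) and a.e.\ in $\Omega$ to some $\tilde u\in X$. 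Weak lower semicontinuity of $\mathcal E_s$, strong $L^2$-convergence together with $\sigma\in L^\infty$ for the middle term, and Fatou's lemma for the $L^3$-term give $J(\tilde u)\le\liminf_k J(u_k)=\inf_X J<0$, so $\tilde u$ is a minimizer and $\tilde u\not\equiv0$.

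Finally, replacing $\tilde u$ by $|\tilde u|$ leaves the last two integrals unchanged and does not increase $\mathcal E_s$ (because $\big||a|-|b|\big|\le|a-b|$, and $|\nabla|u||=|\nabla u|$ a.e.), so $|\tilde u|$ is again a minimizer; after relabelling we may assume $\tilde u\ge0$, and $\tilde u\not\equiv 0$. The Euler--Lagrange equation of $J$ at this minimizer then reads $(-\Delta)^s\tilde u=\sigma\tilde u-|\tilde u|\tilde u=(\sigma-\tilde u)\tilde u$ in $\Omega$, which is \eqref{eig0}. The two cases $s=1$ and $s<1$ run in perfect parallel; the fractional setting enters only through the choice of function space and the use of the fractional Poincaré--Sobolev embedding and the associated compactness recalled above. (Alternatively one could build $\tilde u$ by monotone iteration between a small multiple $\varepsilon\psi$ of a nonnegative principal eigenfunction $\psi$ of $(-\Delta)^s-\sigma$ in $\Omega$ and a bounded supersolution, but the variational route above is the most economical.)
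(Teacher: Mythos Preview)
Your proposal is correct and follows essentially the same variational route as the paper: minimize the energy $\tfrac12\mathcal E_s(u)-\tfrac12\int_\Omega\sigma u^2+\tfrac13\int_\Omega|u|^3$, use coercivity to obtain a minimizer, replace it by its absolute value, and invoke the reverse (fractional) Poincar\'e--Sobolev condition via $J(\tau\phi)<0$ to rule out the trivial solution. The only cosmetic difference is in the coercivity step: the paper absorbs $\sigma u^2/2$ into $|u|^3/3$ pointwise via Young's inequality, whereas you use the H\"older bound $\int_\Omega|u|^3\ge|\Omega|^{-1/2}\|u\|_{L^2}^3$; both arguments yield $J(u)\ge\tfrac12\mathcal E_s(u)-C$, and you are somewhat more explicit than the paper about the lower-semicontinuity/compactness step of the direct method.
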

\begin{proof}
The proof is a minimization argument, based on coercivity and energy methods. Though the idea of the proof is rather standard,
see e.g. \cite{Can_Co}, we provide the necessary details for the facility of the reader. The proof also gives us the possibility of a comparison between local and nonlocal case.

First, we prove the theorem for the local case $s=1$ and then we provide the suitable changes in order to prove the nonlocal case, too.
\begin{itemize}
\item[(1)] Consider the following energy
\[
E(u):=\int_\Omega \frac{|\nabla u|^2}{2}-\sigma\frac{u^2}{2}+\frac{|u|^3}{3}
\]
defined on $H^1_0(\Omega)$. Notice that the Euler-Lagrange equation for $E$ gives
\[
-\Delta u=(\sigma-|u|)u\,.
\]
We show that the energy $E$ is coercive in $H^1_0(\Omega)$, that is
\begin{equation}\label{COE}
E(u)\to +\infty\ \text{as }\|u\|_{H^1_0(\Omega)}\to+\infty\,.
\end{equation}
For this, we use the Young inequality with exponents~$3/2$ and~$3$
to see that, for any~$a$, $b\ge0$,
\[
ab\le \frac 23 a^{\frac 32}+\frac 13 b^3\,.
\]
In particular, taking~$a:=2^{-2/3} u^2 $ and~$b:=2^{-1/3}
\|\sigma\|_{L^\infty(\Omega)}$, we obtain that
$$ \sigma\frac{u^2}{2}
\le \|\sigma\|_{L^\infty(\Omega)}\frac{u^2}{2}
\le \frac{|u|^3}{3}+\frac{
\|\sigma\|_{L^\infty(\Omega)}^{3} }{6},$$
hence
$$ -\sigma\frac{u^2}{2} +\frac{|u|^3}{3} \ge -c_0,$$
for some~$c_0>0$ independent of~$u$. Accordingly,
$$ E(u)\ge \int_\Omega \frac{|\nabla
u(x)|^2}{2}\,dx-c_0|\Omega|,$$
that establishes~\eqref{COE}.

As a consequence of~\eqref{COE},
we have that~$E$ has a global minimum $\overline u\in H^1_0(\Omega)$, satisfying
\[
-\Delta\overline u=(\sigma-|\overline u|)\overline u\,.
\]
Since $\overline u$ is a minimum, then $\tilde u:=|\overline u|$ is a minimum too, because $E(\overline u)=E(|\overline u|)$. Thus we can consider a non-negative function $\tilde u\ge 0$ satisfying
\[
-\Delta\tilde u=(\sigma-\tilde u)\tilde u\,.
\]

We conclude the proof by
showing that condition \eqref{poscond} guarantees that $E(\tilde u)<0$
and then $\tilde u\not\equiv 0$. By~\eqref{poscond},
there exists a function $u\in H^1_0(\Omega)$ with
\[\int_\Omega\sigma(x)u(x)^2\,dx-\int_\Omega|\nabla u|^2\,dx>0\,.\]
By density, we can suppose that~$u\in C^\infty_0(\Omega)$.
For every $\varepsilon>0$ we can rewrite the energy $E$ evaluated at $\varepsilon u$ as
\[
E(\varepsilon u)=\varepsilon^2\left(\int_\Omega\frac{|\nabla u|^2}{2}-\sigma\frac{u^2}{2}+\varepsilon\frac{u^3}{3}\right)\,,
\] 
hence $E(\tilde u)\le E(\varepsilon u)<0$ provided $\varepsilon$ is small enough.
\item[(2)] The energy
\[
E_s(v):=\int_{\R^n\times\R^n}\frac{|v(x)-v(y)|^2}{|x-y|^{n+2s}}\,dx\,dy-\int_\Omega\sigma\frac{v^2}{2}+\frac{|v|^3}{3}
\]
is well defined\footnote{For the sake of simplicity, we omit the multiplicative normalization constants.} and coercive in $H^s_0(\Omega)$ and the proof is the same as in the local case. Moreover, the Euler-Lagrange equation for $E_s$ is
\[
(-\Delta)^sv=(\sigma-|v|)v\,.
\]
Consequently, $E_s$ has a global minimum $\overline v$ and $\tilde v=|\overline v|$ is a minimum, too, because
\begin{align*}
E_s(\tilde v) & =\int_{\R^n\times\R^n}\frac{||\overline v(x)|-|\overline v(y)||^2}{|x-y|^{n+2s}}\,dx\,dy-\int_\Omega\sigma\frac{|\overline v|^2}{2}+\frac{|\overline v|^3}{3}\\
& \le \int_{\R^n\times\R^n}\frac{|\overline v(x)-\overline v(y)|^2}{|x-y|^{n+2s}}\,dx\,dy-\int_\Omega\sigma\frac{\overline v^2}{2}+\frac{|\overline v|^3}{3}=E_s(\overline v)\,.
\end{align*}
As we proved in part (1), condition \eqref{poscond_frac} ensures that $E_s(\tilde v)<0$ and thus $\tilde v\not\equiv 0$.\qedhere
\end{itemize}
\end{proof}

The result in Theorem~\ref{thm:sta_sol} and several variations of it
are rather of classical flavor: with slightly different assumptions on $\sigma$ (take, for instance, $\sigma>0$ in $\Omega$) and a branching condition matching \eqref{poscond} for the existence of non-trivial solutions, it can be found in \cite{Am_Pro} and in \cite{Be}.

In view of Theorem \ref{thm:sta_sol} and comparing with Remark \ref{0oUU} and Remark \ref{rmk:mangime}, we obtain the fact that the richer the environment is, the easier the survival of a population. This fact, which matches the intuition, finds a detailed quantification in the following observation.

\begin{remark}{\rm
Since the reverse (fractional) Poincar\'e-Sobolev inequalities \eqref{poscond} and \eqref{poscond_frac} seem to play a symmetric role in Theorem \ref{thm:sta_sol}, let us compare them more carefully. In some sense, the Dirichlet boundary conditions being equal for \eqref{eig0} when $s=1$ and when $s<1$, the nonlocal population has an advantage when the diameter of the domain tends to $0$. 

More precisely, as we remarked in Remark \ref{0oUU}, a resource $\sigma$ needs to be sufficiently large in order to meet \eqref{hp_exist}, which implies \eqref{poscond}. How large should $\sigma$ be is proportional to the first eigenvalue of the Laplacian $\lambda_1(\Omega)=C_\sharp(1,\Omega)^{-1}$. Now, if $\Omega=B_r$, we observe that
\[
\lambda_1(B_r)=\frac{1}{r^2C_\sharp(1,B_1)}\longrightarrow +\infty\qquad\text{as }r\to 0\,, 
\]
that is, the environment becomes more and more lethal for the local population, because \eqref{hp_exist} is very difficult to satisfy. The situation is milder for the nonlocal population, because, thanks to Remark \ref{rmk:rs}
\[
\frac{1}{\lambda_1(B_r)C_\sharp(s,B_r)}=\frac{r^2C_\sharp(1,B_1)}{r^{2s}C_\sharp(s,B_1)}\longrightarrow 0\qquad\text{as }r\to 0\,.
\]
This means that the criticality of the domain size is slower to prevail on a nonlocal population. 

In this sense, the lethal property of the boundary
(as described by the homogeneous Dirichlet datum
outside the domain) has a different
influence on local and nonlocal populations, depending
on the scale of the domain. For instance,
for small balls (when~$1/r\gg 1/r^s$),
nonlocal populations are favored. Conversely,
for large balls (when~$1/r\ll 1/r^s$),
local populations are favored (heuristically,
because the local diffusion has little chance to
reach the deadly boundary).

In the perspective of an applied analysis, one can find explicit fractional Sobolev constants in \cite{Co_Ta}.
}\end{remark}

In the remaining part of this section, we focus on the local case, that is, on the properties of a stationary point for the system \eqref{syst} of type $(\tilde u,0)$. This is motivated by the evolutionary point of view of studying the effect of the advent of a new population (this approach is indeed often adopted in the literature, see e.g. \cite{Doc_Hut_Mi_Per}, \cite{Ha}). Of course, we think that it would be also an interesting problem to investigate the cases of a dominant nonlocal population (corresponding to a stationary point of type $(0,\tilde v)$) and of the possible coexistence of two different populations, namely provide concrete assumptions on the resources and the domains that allow the existence of equilibria~$(u^*,v^*)$ with both~$u^*$ and~$v^*$ nontrivial. 

Of course, an easier approach to the existence of mixed states~$(u^*,v^*)$
may be taken by
studying the case of different resources in the two logistic equations, but we do not address this problem
in the present paper.

\begin{remark}{\rm
As a byproduct of the proof of
Theorem~\ref{thm:sta_sol}, we have that the solution
found is an energy minimizer. That is, if~$\tilde u$
is the solution obtained in Theorem~\ref{thm:sta_sol},
then~$E(\tilde u+\varepsilon u)\ge E(\tilde u)$,
for any~$u\in H^1_0(\Omega)$. Accordingly, the map
$$ \varepsilon \mapsto {\mathcal{E}}(\varepsilon):=E(\tilde u+\varepsilon u)$$
attains its minimum at~$\varepsilon=0$ and therefore
\begin{equation}\label{MiM}
0\le {\mathcal{E}}''(0)=
\int_\Omega |\nabla u|^2 -\sigma u^2 +2\tilde u u^2\,dx.\end{equation}
In particular, the solution is linearly stable,
i.e. the second derivative of the energy is a positive quadratic form.
}\end{remark}
The energy functional is quite useful to capture the
stability of the pure states, such as the ones of the type~$(\tilde u,0)$.
For related approaches, also based on the linearization of
semilinear systems, see e.g.~\cite{Can_Co}.
Also, it is useful to recall that the population~$\tilde u$
cannot beat the resource~$\sigma$, as stated in the following result:

\begin{lemma}\label{TAP}
Consider a bounded function $\sigma:\Omega\to[0,+\infty)$
and a non-negative solution~$\tilde u\in H^1_0(\Omega)$ of~\eqref{eig0}.
Then~$\tilde u(x)\le \|\sigma\|_{L^\infty(\Omega)}$, for any~$x\in\Omega$.
\end{lemma}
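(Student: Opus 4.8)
The plan is to prove the $L^\infty$ bound $\tilde u(x)\le \|\sigma\|_{L^\infty(\Omega)}$ by a maximum-principle / truncation argument applied to the equation $-\Delta\tilde u=(\sigma-\tilde u)\tilde u$ with $\tilde u=0$ on $\partial\Omega$. First I would set $M:=\|\sigma\|_{L^\infty(\Omega)}$ and introduce the truncated test function $w:=(\tilde u-M)_+\in H^1_0(\Omega)$, which is admissible since $\tilde u\ge 0$ vanishes on $\partial\Omega$ and $M\ge 0$, so $(\tilde u-M)_+$ also vanishes on $\partial\Omega$. Testing the weak formulation of $-\Delta\tilde u=(\sigma-\tilde u)\tilde u$ against $w$ gives
\[
\int_\Omega |\nabla w|^2\,dx=\int_\Omega \nabla\tilde u\cdot\nabla w\,dx=\int_\Omega(\sigma-\tilde u)\,\tilde u\,w\,dx,
\]
where in the first equality I used that $\nabla w=\nabla\tilde u$ a.e. on $\{\tilde u>M\}$ and $\nabla w=0$ a.e. elsewhere.

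Next I would observe that on the set $\{w>0\}=\{\tilde u>M\}$ one has $\sigma-\tilde u\le M-\tilde u<0$, while $\tilde u>M\ge 0$ and $w>0$; hence the integrand $(\sigma-\tilde u)\tilde u w$ is $\le 0$ pointwise a.e. on $\Omega$ (it is identically zero off $\{w>0\}$ and strictly negative or zero on $\{w>0\}$). Therefore
\[
\int_\Omega|\nabla w|^2\,dx\le 0,
\]
which forces $\nabla w=0$ a.e., so $w$ is constant on each connected component of $\Omega$; combined with $w\in H^1_0(\Omega)$ (zero boundary trace) this yields $w\equiv 0$, i.e. $(\tilde u-M)_+\equiv 0$, which is exactly $\tilde u\le M$ a.e. in $\Omega$. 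If pointwise (rather than a.e.) control is wanted, one invokes elliptic regularity: since $\sigma\in L^\infty$ and $\tilde u\in H^1_0\cap L^\infty$, the right-hand side $(\sigma-\tilde u)\tilde u$ is in $L^\infty$, so $\tilde u\in W^{2,p}_{\mathrm{loc}}$ for all $p<\infty$ and hence continuous, upgrading the a.e. bound to every $x\in\Omega$.

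The main obstacle here is purely a matter of bookkeeping rather than of genuine difficulty: one must make sure $w=(\tilde u-M)_+$ is a legitimate element of $H^1_0(\Omega)$ (standard, since composition of an $H^1_0$ function with a Lipschitz map fixing $0$ stays in $H^1_0$, using $\tilde u\ge 0$) and that the weak formulation of \eqref{eig0} indeed applies with this test function. I do not foresee any real analytic subtlety; the sign structure $(\sigma-\tilde u)\tilde u\le 0$ on $\{\tilde u>M\}$ does all the work, and the argument is the classical Stampacchia truncation combined with the Poincaré inequality (which guarantees $\int|\nabla w|^2=0\Rightarrow w=0$ for $w\in H^1_0$). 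An alternative, essentially equivalent route would be a direct comparison with the constant supersolution $M$, but the truncation argument is cleaner and self-contained given what has been established in the excerpt.
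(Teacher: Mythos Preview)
Your proof is correct and follows essentially the same argument as the paper: both test the equation against the truncation $(\tilde u-\|\sigma\|_{L^\infty})_+$, observe that the right-hand side has a sign on the set where this truncation is positive, conclude $\int|\nabla w|^2\le 0$, and deduce $w\equiv 0$. Your added remark on regularity to upgrade the a.e.\ bound to a pointwise one is a reasonable clarification (the paper handles continuity separately in the subsequent corollary).
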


\begin{proof} Let~$\Theta:=
\|\sigma\|_{L^\infty(\Omega)}$. We test equation~\eqref{eig0}
against~$v:=\max \{ \tilde u-\Theta,\,0\}$ and we see that
$$ \int_{\Omega} |\nabla v|^2
=\int_\Omega \nabla \tilde u
\cdot \nabla v=\int_\Omega (\sigma-\tilde u)\tilde u v
=\int_{\{\tilde u\ge\Theta\}}
(\sigma-\tilde u)\tilde u(\tilde u-\Theta)
\,.$$
Now observe that, in~$\{\tilde u\ge\Theta\}$,
we have~$\sigma-\tilde u\le \Theta-\tilde u\le0$,
which shows that
$$ \int_{\Omega} |\nabla v|^2\le 0.$$
Accordingly, $v$ vanishes identically and so~$\tilde u\leq\Theta$.
\end{proof}

\begin{corollary}\label{0dj789js}
Consider a bounded function $\sigma:\Omega\to[0,+\infty)$
and a non-negative solution~$\tilde u\in H^1_0(\Omega)$ of~\eqref{eig0}.
Then~$\tilde u$ is continuous inside~$\Omega$.
\end{corollary}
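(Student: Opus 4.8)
The plan is to deduce the interior continuity of~$\tilde u$ from standard elliptic regularity, bootstrapping on the information that~$\tilde u$ is bounded (which we already have from Lemma~\ref{TAP}). Since~$\tilde u\in H^1_0(\Omega)$ solves~\eqref{eig0} with~$s=1$, i.e. $-\Delta\tilde u=(\sigma-\tilde u)\tilde u=:f$ in~$\Omega$ in the weak sense, the first observation is that the right-hand side~$f$ is bounded: indeed $\|\sigma\|_{L^\infty(\Omega)}<+\infty$ by assumption and $0\le\tilde u\le\|\sigma\|_{L^\infty(\Omega)}$ by Lemma~\ref{TAP}, so $|f|\le 2\|\sigma\|_{L^\infty(\Omega)}^2$ a.e. in~$\Omega$. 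In particular $f\in L^p_{\mathrm{loc}}(\Omega)$ for every~$p<+\infty$.

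Next I would invoke interior elliptic estimates: a weak solution of~$-\Delta\tilde u=f$ with~$f\in L^p_{\mathrm{loc}}(\Omega)$ for some~$p>n/2$ belongs to~$W^{2,p}_{\mathrm{loc}}(\Omega)$ (Calder\'on--Zygmund theory), hence by Sobolev embedding to~$C^{0,\alpha}_{\mathrm{loc}}(\Omega)$ for a suitable~$\alpha\in(0,1)$ once~$p>n$. Since here~$f\in L^\infty_{\mathrm{loc}}(\Omega)\subset L^p_{\mathrm{loc}}(\Omega)$ for all~$p$, this applies; alternatively one can quote directly the De~Giorgi--Nash--Moser theorem, which gives local H\"older continuity of any bounded weak solution of a uniformly elliptic equation with bounded right-hand side. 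Either route yields that~$\tilde u\in C^{0,\alpha}_{\mathrm{loc}}(\Omega)$, and in particular~$\tilde u$ is continuous at every interior point of~$\Omega$. (If one wishes, one can then iterate: $f=(\sigma-\tilde u)\tilde u$ inherits the regularity of~$\sigma$ up to that of~$\tilde u$, so if~$\sigma$ were more regular one would obtain~$\tilde u\in C^{1,\alpha}_{\mathrm{loc}}$ and beyond, but continuity is all that is claimed.)

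There is essentially no obstacle here: the only point requiring a little care is that the regularity is purely \emph{interior}, since~$\sigma$ is merely bounded and nothing is assumed about its behavior near~$\partial\Omega$, so no continuity up to the boundary can be expected and none is asserted. I would therefore phrase the argument locally on an arbitrary ball~$B\Subset\Omega$ and conclude continuity on~$B$, hence on all of~$\Omega$ by the arbitrariness of~$B$. The statement and proof are short, and the substantive input (boundedness of~$\tilde u$) has already been established in Lemma~\ref{TAP}.

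\begin{Proof}
Let~$\tilde u\in H^1_0(\Omega)$ be a non-negative solution of~\eqref{eig0} with~$s=1$, so that, in the weak sense, $-\Delta\tilde u = f$ in~$\Omega$, where~$f:=(\sigma-\tilde u)\tilde u$. By Lemma~\ref{TAP} we have~$0\le\tilde u(x)\le\|\sigma\|_{L^\infty(\Omega)}$ for a.e.~$x\in\Omega$, and therefore
$$ |f(x)|=|(\sigma(x)-\tilde u(x))\tilde u(x)|\le 2\|\sigma\|_{L^\infty(\Omega)}^2\qquad\text{for a.e.~}x\in\Omega.$$
Hence~$\tilde u$ is a bounded weak solution of a uniformly elliptic equation with bounded right-hand side. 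Fix any ball~$B$ with~$\overline B\subset\Omega$. By interior elliptic regularity (see e.g. the Calder\'on--Zygmund estimates, which give~$\tilde u\in W^{2,p}(B)$ for every~$p<+\infty$ since~$f\in L^\infty(\Omega)$, followed by the Sobolev embedding~$W^{2,p}\hookrightarrow C^{0,\alpha}$ for~$p>n$; equivalently, by the De~Giorgi--Nash--Moser theorem), we conclude that~$\tilde u\in C^{0,\alpha}(B)$ for some~$\alpha\in(0,1)$. In particular~$\tilde u$ is continuous on~$B$. Since~$B$ was an arbitrary ball compactly contained in~$\Omega$, it follows that~$\tilde u$ is continuous inside~$\Omega$.
\end{Proof}
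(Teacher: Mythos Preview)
Your proof is correct and follows the same strategy as the paper: establish boundedness of~$\tilde u$ (via Lemma~\ref{TAP}), deduce that the right-hand side~$(\sigma-\tilde u)\tilde u$ is bounded, and then invoke standard interior elliptic regularity. The paper's own argument is extremely terse---it merely recalls the test-function trick from Lemma~\ref{TAP} and defers to~\cite{Can_Co} for the regularity step---whereas you spell out explicitly which regularity result (Calder\'on--Zygmund or De~Giorgi--Nash--Moser) is being applied and why it suffices; this added precision is helpful and changes nothing substantive.
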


\begin{proof} 
One defines~$\Theta:=\|\sigma\|_{L^\infty(\Omega)}$ and tests equation~\eqref{eig0}
against~$v:=\max \{ \tilde u-\Theta,\,0\}$ to obtain the desired
result (see e.g. \cite{Can_Co}).
\end{proof}


{F}rom now on, we focus on the stability of the system around the stationary point $(\tilde u,0)$, where the distribution of resources $\sigma$ satisfies \eqref{poscond} and $\tilde u\in H^1_0(\Omega)$ is a non-trivial, non-negative solution of \eqref{eig0}. 

The linearization of the system \eqref{syst} at $(\tilde u, 0)$ gives, as a result, the linear operator
\begin{equation}\label{lin_syst}\begin{split}
L_{(\tilde u,0)}(u,v)\,&=\left(
\begin{array}{cc}
\Delta +(\sigma-2\tilde u) & -\tilde u \\
0 & -(-\Delta)^s +(\sigma-\tilde u) 
\end{array}
\right)\left(
\begin{array}{c}
u \\
v
\end{array}
\right)\\
&=
\left(
\begin{array}{c}
\Delta u+(\sigma-2\tilde u)u -\tilde u v
\\ -(-\Delta)^s v +(\sigma-\tilde u)v
\end{array}
\right)
\,,\end{split}
\end{equation}
for any~$(u,v)\in H^1_0(\Omega)\times H^s_0(\Omega)$.
The associated quadratic form, with respect to the duality in~$H^1_0(\Omega)\times H^s_0(\Omega)$,
is
\begin{equation}\label{Q Def}
Q_{(\tilde u,0)}(u,v) =-[u]_{H^1(\R^n)}^2 -[v]_{H^s(\R^n)}^2
+\int_\Omega (\sigma-2\tilde u)u^2 -\tilde u uv
+(\sigma-\tilde u)v^2 \,dx\,,\end{equation}
for any~$(u,v)\in H^1_0(\Omega)\times H^s_0(\Omega)$.
{F}rom the triangular form
of~$L_{(\tilde u,0)}$,
the relevant information is concentrated on the signs of the principal eigenvalues of the pseudodifferential operators on the diagonal
of~\eqref{lin_syst}. 
In this spirit, we first point out that the direction~$(\tilde u,0)$ is
always linearly stable. This is pretty obvious if we think at the biological
model, since~$(\tilde u,0)$ is the stationary configuration
of just one population, and slightly and proportionally
modifying the density of this population
without letting any new population come into the environment should
not drive the system too far from the previous equilibrium.
The formal statement goes as follows:

\begin{lemma}\label{9dv77}
As long as there exists a solution $\tilde u$ for \eqref{eig0}, we have that
$$ Q_{(\tilde u,0)}(\tilde u,0)<0.$$
\end{lemma}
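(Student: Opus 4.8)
The plan is to evaluate the quadratic form $Q_{(\tilde u,0)}$ directly at the point $(\tilde u,0)$ and exploit the fact that $\tilde u$ solves the stationary equation~\eqref{eig0}. Plugging $v=0$ into the expression~\eqref{Q Def} kills every term involving $v$ and the cross term $\tilde u u v$, leaving
\[
Q_{(\tilde u,0)}(\tilde u,0) = -[\tilde u]_{H^1(\R^n)}^2 + \int_\Omega (\sigma - 2\tilde u)\,\tilde u^2\,dx
= -\int_\Omega |\nabla \tilde u|^2\,dx + \int_\Omega (\sigma - 2\tilde u)\,\tilde u^2\,dx.
\]

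Next I would use~\eqref{eig0} (with $s=1$, i.e.\ $-\Delta \tilde u = (\sigma - \tilde u)\tilde u$) tested against $\tilde u$ itself: integrating by parts and using the homogeneous Dirichlet condition gives $\int_\Omega |\nabla \tilde u|^2 = \int_\Omega (\sigma - \tilde u)\tilde u^2$. Substituting this identity into the display above yields
\[
Q_{(\tilde u,0)}(\tilde u,0) = -\int_\Omega (\sigma - \tilde u)\,\tilde u^2\,dx + \int_\Omega (\sigma - 2\tilde u)\,\tilde u^2\,dx = -\int_\Omega \tilde u^3\,dx.
\]

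Finally, since $\tilde u\ge 0$ and $\tilde u\not\equiv 0$ (both guaranteed by Theorem~\ref{thm:sta_sol} whenever a solution of~\eqref{eig0} exists), the integral $\int_\Omega \tilde u^3\,dx$ is strictly positive, hence $Q_{(\tilde u,0)}(\tilde u,0) = -\int_\Omega \tilde u^3\,dx < 0$, which is the claim. There is essentially no obstacle here: the only points requiring a word of care are the legitimacy of testing~\eqref{eig0} against $\tilde u\in H^1_0(\Omega)$ (standard, since $\tilde u$ lies in the energy space and, by Lemma~\ref{TAP} and Corollary~\ref{0dj789js}, is bounded and continuous in $\Omega$) and the strict positivity of $\int_\Omega \tilde u^3$, which is immediate from non-negativity together with non-triviality of $\tilde u$.
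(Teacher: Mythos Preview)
Your proof is correct and follows exactly the same route as the paper: plug $(\tilde u,0)$ into $Q$, use the weak form of~\eqref{eig0} tested against $\tilde u$ to replace the gradient term, and conclude $Q_{(\tilde u,0)}(\tilde u,0)=-\int_\Omega \tilde u^3<0$ by non-triviality and non-negativity of $\tilde u$.
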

\begin{proof} By testing~\eqref{eig0} against~$\tilde u$,
we obtain that
$$ [\tilde u]_{H^1(\R^n)}^2=\int_\Omega(\sigma-\tilde u)^2\tilde u^2\,dx.$$
As a consequence,
$$ Q_{(\tilde u,0)}(\tilde u,0)=
-[\tilde u]_{H^1(\R^n)}^2
+\int_\Omega (\sigma-2\tilde u)\tilde u^2\,dx
= -\int_\Omega \tilde u^3\,dx.$$
The latter term is strictly negative, thanks to Theorem~\ref{thm:sta_sol}
and so we obtain the desired result.
\end{proof}

We point out that Lemma~\ref{9dv77}
is a particular case of a more general stability result.
Namely, the stationary configuration~$(\tilde u,0)$, which corresponds
to the local population colonizing the whole
of the environment, is also linearly stable with respect
to all the perturbations in which only the the density
of the local species varies
(i.e. the possible source of instability in this setting
may only come from the advent of a nonlocal population).
The formal result goes as follows:

\begin{lemma}
As long as there exists a solution $\tilde u$ for \eqref{eig0}, we have that
$$ Q_{(\tilde u,0)}(u,0)\le0$$
for any~$u\in H^1_0(\Omega)$.
\end{lemma}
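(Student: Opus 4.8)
The plan is to reduce the claim to the linear stability information already extracted in the remark containing~\eqref{MiM}. Recall that, by construction in Theorem~\ref{thm:sta_sol}, the function~$\tilde u$ is not merely a solution of~\eqref{eig0} but a global minimizer of the energy~$E$. Hence, for any~$u\in H^1_0(\Omega)$, the scalar function~$\varepsilon\mapsto E(\tilde u+\varepsilon u)$ is minimized at~$\varepsilon=0$, and computing its second derivative at the origin gives precisely
\[
0\le \int_\Omega |\nabla u|^2 - \sigma u^2 + 2\tilde u\, u^2\,dx .
\]
This is exactly the statement that~$\int_\Omega(\sigma-2\tilde u)u^2\,dx \le [u]_{H^1(\R^n)}^2$ for all~$u\in H^1_0(\Omega)$.

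Now I would simply specialize the quadratic form in~\eqref{Q Def} to the pair~$(u,0)$. Setting~$v=0$ kills the term~$-[v]_{H^s(\R^n)}^2$, the cross term~$-\tilde u\, uv$, and the term~$(\sigma-\tilde u)v^2$, leaving
\[
Q_{(\tilde u,0)}(u,0) = -[u]_{H^1(\R^n)}^2 + \int_\Omega (\sigma-2\tilde u)u^2\,dx .
\]
Combining this identity with the inequality from the minimality of~$\tilde u$ yields~$Q_{(\tilde u,0)}(u,0)\le 0$ immediately.

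There is essentially no obstacle here: the only point requiring a word of care is the justification that~$\varepsilon\mapsto E(\tilde u+\varepsilon u)$ is twice differentiable with the stated second derivative, which is routine since~$E$ is a polynomial expression in~$\varepsilon$ of degree at most~$3$ once~$\tilde u$ and~$u$ are fixed (the gradient term is quadratic, the resource term is quadratic, the cubic term contributes the~$\varepsilon^2$ coefficient~$3\int_\Omega |\tilde u|\, u^2$, whose nonnegativity only strengthens~\eqref{MiM} and is in fact already absorbed since one can work with~$\tilde u\ge 0$). I would phrase the proof in two sentences: invoke the minimality of~$\tilde u$ to get the inequality displayed above, then read off~$Q_{(\tilde u,0)}(u,0)$ from~\eqref{Q Def} with~$v=0$ and conclude. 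This also makes transparent why the instability in Theorem~\ref{MAIN} can only be generated by the~$v$-direction, i.e. by the operator~$-(-\Delta)^s+(\sigma-\tilde u)$ on the diagonal of~\eqref{lin_syst}, which is the content pursued in Section~\ref{sec:ciccia}.
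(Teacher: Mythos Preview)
Your proposal is correct and follows exactly the paper's own argument: specialize~\eqref{Q Def} to~$(u,0)$ and invoke the second-variation inequality~\eqref{MiM} coming from the minimality of~$\tilde u$. The paper's proof is literally these two steps in one line; your additional remarks on differentiability of~$\varepsilon\mapsto E(\tilde u+\varepsilon u)$ are harmless embellishment (note only a minor numerical slip: the~$\varepsilon^2$ coefficient from~$\tfrac{1}{3}|\tilde u+\varepsilon u|^3$ is~$\int_\Omega \tilde u\,u^2$, not~$3\int_\Omega|\tilde u|\,u^2$, but this does not affect the argument since~\eqref{MiM} is already established).
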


This lemma is well-known, due to the variational characterization of the associated eigenvalue problem. We include a proof for the convenience of the reader.
\begin{proof} {F}rom~\eqref{Q Def},
$$ Q_{(\tilde u,0)}(u,0) =-\int_\Omega |\nabla u|^2\,dx
+\int_\Omega (\sigma-2\tilde u)u^2 \,dx,$$
hence the claim follows from~\eqref{MiM}.
\end{proof}

In view of Lemma~\ref{9dv77}, we obtain that
a good way to detect the possible
linear instability of the point $(\tilde u, 0)$ is to rely upon
the perturbations of the form~$(0,v)$, i.e. in the possible
advent of a new population with different diffusive strategy.
The purpose of the next section is therefore to understand
when it is possible to obtain that
$$ Q_{(\tilde u,0)}(0,v_\star)>0,$$
for a suitable choice of~$v_\star\in H^s_0(\Omega)$.


\section{Linear instability}\label{sec:ciccia}

Our aim in this section
is to enlighten the connection between the distribution of resources $\sigma$ and the possible instability of the system, which would suggest some convenience in a nonlocal dispersal strategy of the second species $v$.
For this, we introduce the following notation:

\begin{definition}\label{def:sscat}{\rm
Let $\sigma:\Omega\to[0,+\infty)$ satisfy the
reverse Poincar\'e-Sobolev condition of Definition \ref{def:nontriv}.
Let $\tilde u\ge 0$ be a non-trivial solution of the non-linear equation \eqref{eig0}, provided by Theorem \ref{thm:sta_sol}.
We say that the pair~$(\sigma,\tilde u)$ is mismatched in~$\Omega$ if 
there exists~$x_0\in\Omega$ and~$r>0$ with~$B_r(x_0)\subset\Omega$ and 
\begin{equation}\label{cond2}
\inf_{x\in B_r(x_0)}\big( \sigma(x)-\tilde u(x)\big)
>\frac{1}{C_\sharp(s,B_r(x_0))}\,.
\end{equation}
In this formula,
the constant~$C_\sharp(s,B_r(x_0))$
is the sharp fractional Poincar\'e-Sobolev constant with respect to the ball~$B_r(x_0)$
provided by Theorem \ref{thm_sobineq}.
}\end{definition}

Roughly speaking, condition~\eqref{cond2}
says that the solution~$\tilde u$ is not capable to
exhaust the whole of the resource~$\sigma$ in the whole
of the domain: that is, at least, in the region~$B_r(x_0)$,
the population does not manage to take advantage of all the resource
at its disposal and there is at least a quantity~$C_\sharp(s,\Omega)^{-1}r^{-2s}$
as a leftover. 

In Subsection 3.2 we will see an example of mismatching $(\sigma,\tilde u)$
and it will be clear in that case that the mismatch condition depends
basically on $\sigma$ only.

In our setting, condition~\eqref{cond2} is sufficient to
ensure linear instability, as given by the following result.

\begin{proposition}\label{thm:pos_eig}
If the mismatch
condition in~\eqref{cond2} is satisfied, then
there exists~$v_\star\in H^s_0(\Omega)$
such that~$ Q_{(\tilde u,0)}(0,v_\star)>0$.
\end{proposition}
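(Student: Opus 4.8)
The plan is to construct an explicit test function $v_\star$ supported in the ball $B_r(x_0)$ where the mismatch condition holds, and show that the quadratic form $Q_{(\tilde u,0)}(0,v_\star)$ is positive. Recall from~\eqref{Q Def} that, evaluated at a direction of the form $(0,v)$,
\[
Q_{(\tilde u,0)}(0,v) = -[v]_{H^s(\R^n)}^2 + \int_\Omega (\sigma(x)-\tilde u(x))\,v(x)^2\,dx.
\]
Since we want this to be positive, the natural move is to pick $v_\star$ with support inside $B_r(x_0)$, so that the second integral only sees the region where $\sigma-\tilde u$ is bounded below by the large constant in~\eqref{cond2}. Concretely, let $\phi\in H^s_0(B_r(x_0))$ be a minimizer (or near-minimizer) of the Rayleigh quotient defining the sharp fractional Poincar\'e--Sobolev constant $C_\sharp(s,B_r(x_0))$, so that
\[
[\phi]_{H^s(\R^n)}^2 = \frac{1}{C_\sharp(s,B_r(x_0))}\,\|\phi\|_{L^2(B_r(x_0))}^2,
\]
by~\eqref{CP}. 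Set $v_\star:=\phi$, extended by zero outside $B_r(x_0)$; then $v_\star\in H^s_0(\Omega)$ since $B_r(x_0)\subset\Omega$.

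First I would write out $Q_{(\tilde u,0)}(0,v_\star)$ using this choice. The gradient-type term is exactly $-[\phi]_{H^s(\R^n)}^2 = -\frac{1}{C_\sharp(s,B_r(x_0))}\|\phi\|_{L^2}^2$. For the potential term, because $v_\star$ is supported in $B_r(x_0)$, I can use the pointwise lower bound from~\eqref{cond2}:
\[
\int_\Omega (\sigma-\tilde u)\,v_\star^2\,dx = \int_{B_r(x_0)} (\sigma-\tilde u)\,\phi^2\,dx \ge \Big(\inf_{B_r(x_0)}(\sigma-\tilde u)\Big)\|\phi\|_{L^2(B_r(x_0))}^2 > \frac{1}{C_\sharp(s,B_r(x_0))}\|\phi\|_{L^2}^2.
\]
Adding the two contributions gives $Q_{(\tilde u,0)}(0,v_\star) > 0$, which is the claim. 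If one prefers to avoid assuming the infimum in~\eqref{CP} is attained, I would instead take $\phi$ to be an admissible function whose Rayleigh quotient is within a small margin $\eta>0$ of $C_\sharp(s,B_r(x_0))^{-1}$, and choose $\eta$ small enough relative to the strict gap in~\eqref{cond2}; the strict inequality in the mismatch condition leaves exactly the room needed for this.

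The only genuine subtlety — and the step I would be most careful about — is that $\tilde u$ must actually be bounded and measurable on $B_r(x_0)$ so that $\inf_{B_r(x_0)}(\sigma-\tilde u)$ and the integral $\int_{B_r(x_0)}(\sigma-\tilde u)\phi^2$ make sense; this is guaranteed by Lemma~\ref{TAP} (which bounds $\tilde u$ by $\|\sigma\|_{L^\infty}$) and Corollary~\ref{0dj789js} (continuity of $\tilde u$ inside $\Omega$), both already established. One should also confirm that $v_\star\not\equiv 0$, which holds because $\phi$ is a nontrivial element of $H^s_0(B_r(x_0))$. Everything else is the bookkeeping of plugging into~\eqref{Q Def}, so no serious obstacle remains once the test function is chosen correctly and the domain monotonicity $H^s_0(B_r(x_0))\subset H^s_0(\Omega)$ is invoked.
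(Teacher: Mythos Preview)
Your proof is correct and follows essentially the same approach as the paper: choose $v_\star\in H^s_0(B_r(x_0))\subset H^s_0(\Omega)$ whose Rayleigh quotient is strictly below $\inf_{B_r(x_0)}(\sigma-\tilde u)$, then plug into~\eqref{Q Def}. The paper goes directly to your ``near-minimizer'' variant (exploiting the strict inequality in~\eqref{cond2} to pick any $v_\star$ with Rayleigh quotient between $C_\sharp^{-1}$ and the infimum) rather than first invoking an actual minimizer, but this is a cosmetic difference only.
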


\begin{proof} By~\eqref{CP}
and~\eqref{cond2}, we know that there exists~$x_0\in\Omega$ and~$r>0$
such that
\begin{equation}\label{BB}
B_r(x_0)\subset\Omega\end{equation}
and
$$ \inf_{x\in B_r(x_0)} \big( \sigma(x)-\tilde u(x)\big)>\frac{1}{ C_\sharp(s,B_r(x_0))} =
\inf_{\substack{\phi\in H^s_0(B_r(x_0)) \\ \phi\not\equiv0}}
\frac{[\phi]_{H^s(\R^n)}^2}{\|\phi\|_{L^{2}(B_r(x_0))}^2}.$$
As a consequence, there exists~$v_\star\in H^s_0(B_r(x_0))$ such that~$v_\star\not\equiv0$ and
\begin{equation}\label{BB2}
\inf_{x\in B_r(x_0)} \big( \sigma(x)-\tilde u(x)\big)>
\frac{[v_\star]_{H^s(\R^n)}^2}{\|v_\star\|_{L^{2}(B_r(x_0))}^2}.\end{equation}
Now notice that~$\|v_\star\|_{L^{2}(B_r(x_0))}=
\|v_\star\|_{L^{2}(\Omega)}$ and~$v_\star$ vanishes
a.e. outside~$\Omega$, thanks to~\eqref{BB}.
This gives that~$v_\star\in H^s_0(\Omega)$.
Moreover, by~\eqref{Q Def}
and~\eqref{BB2},
\begin{align*} 
Q_{(\tilde u,0)}(0,v_\star) & =-[v_\star]_{H^s(\R^n)}^2+\int_{B_r(x_0)} (\sigma-\tilde u)v_\star^2 \,dx\\ 
& >-[v_\star]_{H^s(\R^n)}^2 + \frac{[v_\star]_{H^s(\R^n)}^2}{\|v_\star\|_{L^{2}(B_r(x_0))}^2}
\,\int_{B_r(x_0)} v_\star^2 \,dx=0\,,
\end{align*}
which gives the desired result.
\end{proof}


\begin{remark}{\rm Proposition~\ref{thm:pos_eig} proves the
linear instability of the point $(\tilde u, 0)$ with respect to
perturbation of the type~$(0,v_\star)$ (compare with the theory of Monotone Dynamical Systems in \cite{Smith} or see \cite{Can_Co}). 

Indeed, \[
Q_{(\tilde u,0)}(0,v_\star)=\|v_\star\|_{L^2}^2\lambda(\Omega)\,,
\] 
where $\lambda(\Omega)$ is the principal eigenvalue of the linear pseudodifferential operator $-(-\Delta)^s+(\sigma-\tilde u)$ (see the characterization of the principal eigenvalue by Rayleigh quotient in \cite{Be_Ro_Ro}), thus $L_{(\tilde u, 0)}$ has a negative eigenvalue and a positive one and the stability of a stationary state is determined by the spectrum of the linearization (for this general principle see \cite{Mo}). 

Heuristically, this
can be understood as follows: by formally plugging~$(u,v)=(\tilde u, 0)+
\varepsilon (0,v_\star)+o(\varepsilon)$ into~\eqref{90} we obtain
$$ v_t= -(-\Delta)^s v+(\sigma-(u+v))v
=-\varepsilon(-\Delta)^s v_\star +\varepsilon
(\sigma-\tilde u-\varepsilon v_\star) v_\star+o(\varepsilon).$$
Thus, since~$v_t=\varepsilon\partial_t v_\star+o(\varepsilon)$, we formally
obtain
$$ \partial_t v_\star = 
-(-\Delta)^s v_\star +(\sigma-\tilde u) v_\star+o(1).$$
Hence
$$ \partial_t \|v_\star\|_{L^2(\R^n)}^2
= 2\int_{\R^n} v_\star \partial_t v_\star\,dx
= Q_{(\tilde u,0)}(0,v_\star) +o(1),$$
which is positive by Proposition~\ref{thm:pos_eig}.
 
Therefore,Proposition~\ref{thm:pos_eig} states that the size of the
new population (measured in the $L^2$-norm) has chances to
increase (at least for short times).
}\end{remark}

These type of linearization arguments in the neighborhood
of equilibria that correspond to only one biological species
are widely used in Adaptive Dynamics, see for instance \cite{Diek}, \cite{Ha}, \cite{Hut_Mar_Mi_Vic}
and the references therein.

The rest of this section is devoted to show that the assumptions of Proposition \ref{thm:pos_eig} hold for some $\sigma:\Omega\to\R$.
 
\subsection{Rescaling arguments}\label{subsec:ra}

We propose here a rather simple rescaling argument which gives the existence of a domain $\Omega_\lambda$ and a distribution of resources $\sigma_\lambda$ satisfying the assumptions in Proposition \ref{thm:pos_eig}.
The main drawback of this argument is the fact that the domain $\Omega_\lambda$ changes with the parameter. On the other side, it is immediately evident that the resource $\sigma_\lambda$ leads to instability at $(\tilde u_\lambda,0)$ when it starts being sparse and far from being
homogeneous.

We consider here a smooth function $\sigma:\Omega\to[0,+\infty)$
satisfying the 
reverse Poincar\'e-Sobolev condition
in~\eqref{poscond} (recall Remark~\ref{0oUU}) and 
the corresponding
stationary solution $\tilde u$ given by Theorem \ref{thm:sta_sol}.
We see that, in this case, the population~$\tilde u$
does not exhaust the resource~$\sigma$ in the whole of~$\Omega$.
More precisely, we have:

\begin{lemma}\label{CON90}
Let~$\sigma:\Omega\to[0,+\infty)$
be a 
smooth function 
satisfying the
reverse Poincar\'e-Sobolev condition
in~\eqref{poscond} and
let~$\tilde u$ 
be the corresponding
stationary solution given by Theorem \ref{thm:sta_sol}.

Then there exist~$x_0\in\Omega$, $r>0$ and~$c_0>0$ such that~$B_r(x_0)\subset\Omega$ and
$$ \sigma(x)-u(x)\ge c_0$$
for any~$x\in B_r(x_0)$.
\end{lemma}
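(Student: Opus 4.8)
The plan is to argue by contradiction: if the conclusion failed, then $\sigma - \tilde u \le 0$ a.e.\ on $\Omega$ (since $\sigma-\tilde u$ would have no set of positive measure on which it is uniformly bounded below by a positive constant, and in fact we would get $\sigma \le \tilde u$ a.e.). We would then want to deduce that $\tilde u$ does not beat the resource strictly anywhere, which should contradict the fact that $\tilde u$ is a nontrivial solution of \eqref{eig0} for which $E(\tilde u)<0$, as established in Theorem~\ref{thm:sta_sol}. The key point is that the reverse Poincar\'e--Sobolev condition \eqref{poscond} forces $\tilde u$ to be strictly positive somewhere in a way that leaves genuine leftover resource.

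First I would record that $\tilde u$ is continuous in $\Omega$ (Corollary~\ref{0dj789js}) and satisfies $0 \le \tilde u \le \|\sigma\|_{L^\infty(\Omega)}$ (Lemma~\ref{TAP}); since $\sigma$ is smooth, $\sigma - \tilde u$ is continuous in $\Omega$, so it suffices to produce a single point $x_0 \in \Omega$ with $\sigma(x_0)-\tilde u(x_0)>0$, and then a ball $B_r(x_0)\subset\Omega$ and $c_0>0$ with $\sigma - \tilde u \ge c_0$ on $B_r(x_0)$ follow by continuity. So the real content is: $\sigma - \tilde u$ is somewhere strictly positive. Suppose not, i.e.\ $\sigma(x) \le \tilde u(x)$ for every $x \in \Omega$. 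Testing \eqref{eig0} against $\tilde u$ gives
\[
[\tilde u]_{H^1(\R^n)}^2 = \int_\Omega (\sigma - \tilde u)\tilde u^2\,dx \le 0,
\]
because $\sigma - \tilde u \le 0$ and $\tilde u^2 \ge 0$. Hence $[\tilde u]_{H^1(\R^n)}=0$, so $\tilde u$ is constant, and being in $H^1_0(\Omega)$ it must vanish identically. This contradicts the nontriviality of $\tilde u$ asserted in Theorem~\ref{thm:sta_sol}.

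The argument above is essentially the same computation that appears in the proof of Lemma~\ref{9dv77}, so there is no serious obstacle; the only thing to be careful about is the passage from ``$\sigma - \tilde u$ is somewhere positive'' to ``$\sigma - \tilde u \ge c_0$ on a ball'', which is where continuity of $\sigma$ (hypothesis: $\sigma$ smooth) and of $\tilde u$ (Corollary~\ref{0dj789js}) is used, together with the fact that $\Omega$ is open so the point $x_0$ has a full neighborhood inside $\Omega$. One might alternatively avoid the contradiction by invoking \eqref{poscond} directly: a function $u\in H^1_0(\Omega)$ with $\int_\Omega \sigma u^2 > \int_\Omega |\nabla u|^2 \ge 0$ must have $\sigma > 0$ on a set of positive measure, and $\tilde u$ being the energy minimizer with $E(\tilde u)<0$ cannot pointwise dominate $\sigma$ there without violating $[\tilde u]_{H^1}^2 = \int_\Omega(\sigma-\tilde u)\tilde u^2$; but the short contradiction via testing against $\tilde u$ is cleanest and I would present that one.
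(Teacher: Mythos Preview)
Your proof is correct and follows essentially the same approach as the paper: test the equation against~$\tilde u$ to obtain $\int_\Omega|\nabla\tilde u|^2=\int_\Omega(\sigma-\tilde u)\tilde u^2$, use nontriviality of~$\tilde u$ to conclude that~$\sigma-\tilde u$ is positive at some point, and then invoke continuity (Corollary~\ref{0dj789js} together with smoothness of~$\sigma$) to spread this to a ball. The only cosmetic difference is that the paper argues directly (the left-hand side is strictly positive, hence so is the integrand somewhere) rather than by contradiction.
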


\begin{proof} By testing~\eqref{eig0} against~$\tilde u$, we obtain that
$$ 0<\int_\Omega |\nabla \tilde u(x)|^2\,dx=
\int_\Omega (\sigma(x)-\tilde u(x))\tilde u^2(x)\,dx.$$
This implies that there exists~$x_0\in\Omega$ such that~$
\sigma(x_0)-\tilde u(x_0)>0$. The desired result follows
from the continuity of~$\tilde u$ given by Corollary~\ref{0dj789js}.
\end{proof}

In the notation of Lemma~\ref{CON90},
by possibly translating the domain,
we can assume that $x_0=0$, and so
\begin{equation}\label{0cty}{\mbox{$\sigma-\tilde u\ge c_0>0$ in $B_r$.
}}\end{equation}
Then we consider the family of rescaled domains
\[
\Omega_\lambda:=\{\lambda^{-\frac 12}y:\,y\in\Omega\}
\]
and rescaled functions
\[
\sigma_\lambda(x):=\lambda\sigma(\sqrt\lambda x)\,,\quad\forall\,x\in\Omega_\lambda
\]
with $\lambda\ge 1$. Then 
\[
\tilde u_\lambda(x):=\lambda\tilde u(\sqrt\lambda x)\,,\quad\forall\,x\in\Omega_\lambda
\]
is a positive stationary solution for the equation \eqref{eig0} with resource $\sigma_\lambda$, since
\[
\left(\Delta\tilde u_\lambda+(\sigma_\lambda-\tilde u_\lambda)\tilde u_\lambda\right)(x)=\left(\lambda^2\Delta\tilde u+\lambda^2(\sigma-\tilde u)\tilde u\right)(\sqrt\lambda x)=0\,,\quad\forall\,x\in\Omega_\lambda\,.
\]
\begin{proposition}\label{thm:ex_source}
There exists $\Lambda\ge 1$ such that, for every $\lambda\ge\Lambda$, the pair $(\sigma_\lambda,\tilde u_\lambda)$ is
mismatched
in the corresponding domain $\Omega_\lambda$, according to
Definition~\ref{def:sscat}.
\end{proposition}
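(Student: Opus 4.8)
The plan is to exploit the explicit behaviour of both sides of the mismatch inequality~\eqref{cond2} under the parabolic rescaling $x\mapsto\sqrt\lambda\,x$ as $\lambda\to+\infty$. First I would fix the ball $B_r=B_r(0)\subset\Omega$ and the constant $c_0>0$ produced by Lemma~\ref{CON90}, translated so that~\eqref{0cty} holds, namely $\sigma-\tilde u\ge c_0$ on $B_r$; note in particular that $0\in\Omega$, hence $0\in\Omega_\lambda$ for every $\lambda\ge1$. As the candidate ball in the rescaled domain I would take $B_{r_\lambda}(0)$ with $r_\lambda:=\lambda^{-1/2}r$. Since $B_r(0)\subset\Omega$, scaling by $\lambda^{-1/2}$ yields $B_{r_\lambda}(0)=\{\lambda^{-1/2}z:\,z\in B_r(0)\}\subset\{\lambda^{-1/2}y:\,y\in\Omega\}=\Omega_\lambda$, which is the inclusion required by Definition~\ref{def:sscat}.

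Next I would bound the left-hand side of~\eqref{cond2}. For $x\in B_{r_\lambda}(0)$ we have $\sqrt\lambda\,x\in B_r(0)$, so from the definitions of $\sigma_\lambda$ and $\tilde u_\lambda$ together with~\eqref{0cty},
\[ \sigma_\lambda(x)-\tilde u_\lambda(x)=\lambda\big(\sigma(\sqrt\lambda\,x)-\tilde u(\sqrt\lambda\,x)\big)\ge\lambda c_0, \]
whence $\inf_{x\in B_{r_\lambda}(0)}\big(\sigma_\lambda(x)-\tilde u_\lambda(x)\big)\ge\lambda c_0$. For the right-hand side I would invoke the scaling of the sharp fractional Poincar\'e--Sobolev constant from Remark~\ref{rmk:rs}, which gives $C_\sharp(s,B_{r_\lambda}(0))=r_\lambda^{2s}C_\sharp(s,B_1)=r^{2s}\lambda^{-s}C_\sharp(s,B_1)$, and therefore $C_\sharp(s,B_{r_\lambda}(0))^{-1}=\lambda^{s}r^{-2s}C_\sharp(s,B_1)^{-1}$.

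Combining the two estimates, the mismatch condition~\eqref{cond2} for the pair $(\sigma_\lambda,\tilde u_\lambda)$ on the ball $B_{r_\lambda}(0)$ is implied by
\[ \lambda c_0>\frac{\lambda^{s}}{r^{2s}C_\sharp(s,B_1)},\qquad\text{that is}\qquad\lambda^{1-s}>\frac{1}{c_0\,r^{2s}\,C_\sharp(s,B_1)}. \]
Because $s\in(0,1)$, the exponent $1-s$ is strictly positive, so the left-hand side diverges as $\lambda\to+\infty$; it thus suffices to set
\[ \Lambda:=\max\left\{1,\ \big(c_0\,r^{2s}\,C_\sharp(s,B_1)\big)^{-1/(1-s)}\right\}, \]
and the conclusion holds for every $\lambda\ge\Lambda$.

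The argument is in essence just bookkeeping of scaling exponents, and I do not anticipate a genuine obstacle. The one point that actually carries the proof---and the only place where the assumption $s<1$ is used---is that under the rescaling the reaction term $\sigma_\lambda-\tilde u_\lambda$ grows linearly in $\lambda$, while the Poincar\'e constant of the shrinking ball $B_{r_\lambda}(0)$ decays only like $\lambda^{-s}$; hence the ``leftover'' $\lambda c_0$ eventually dominates the threshold $\lambda^{s}r^{-2s}C_\sharp(s,B_1)^{-1}$. The rest (the inclusion $B_{r_\lambda}(0)\subset\Omega_\lambda$ and the positivity of the infimum) is immediate from the construction in Lemma~\ref{CON90} and the definition of $\Omega_\lambda$, $\sigma_\lambda$, $\tilde u_\lambda$.
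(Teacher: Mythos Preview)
Your proof is correct and follows exactly the same approach as the paper: take the rescaled ball $B_{r_\lambda}(0)$ with $r_\lambda=\lambda^{-1/2}r$, bound the leftover below by $c_0\lambda$ via~\eqref{0cty}, and compare with $C_\sharp(s,B_{r_\lambda})^{-1}=\lambda^{s}r^{-2s}C_\sharp(s,B_1)^{-1}$ using Remark~\ref{rmk:rs}. The only cosmetic slip is that your displayed inequality is strict, so your $\Lambda$ should be taken strictly larger than $(c_0\,r^{2s}\,C_\sharp(s,B_1))^{-1/(1-s)}$ for the claim to hold at $\lambda=\Lambda$; this is exactly how the paper phrases the threshold.
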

\begin{proof}
We take~$r_\lambda:=\lambda^{-\frac 12}r$.
By~\eqref{0cty},
\begin{equation}\label{bfd5e}
\begin{split}
& \inf_{|x|<r_\lambda} \big(\sigma_\lambda(x)-\tilde u_\lambda(x)\big)
= \inf_{|x|<\lambda^{-\frac 12}r } \lambda\,
\big(\sigma(\sqrt{\lambda}x)-\tilde u_\lambda(\sqrt{\lambda}x)\big)
\\ &\qquad= \inf_{|y|<r } \lambda\,
\big(\sigma(y)-\tilde u_\lambda(y)\big)\ge c_0\lambda.
\end{split}\end{equation}
On the other hand, by Remark~\ref{rmk:rs},
$$ C_\sharp(s,B_{r_\lambda})=r_\lambda^{2s} C_\sharp(s,B_1)
=\lambda^{-2}r^{2s} C_\sharp(s,B_1).$$
By comparing this with~\eqref{bfd5e}, we conclude that
$$ \inf_{x\in B_{r_\lambda}} \big(\sigma_\lambda(x)-\tilde u_\lambda(x)\big)
\ge c_0\lambda>
\frac{\lambda^{s}}{r^{2s} C_\sharp(s,B_1)}=
\frac{1}{C_\sharp(s,B_{r_\lambda}(x_0))},$$
provided that
\begin{equation*}
\lambda>\left(c_0\,r^{2s}\,C_\sharp(s,B_1)\right)^{-\frac{1}{1-s}}\,.
\qedhere
\end{equation*}
\end{proof}

{F}rom Propositions~\ref{thm:ex_source}
and~\ref{thm:pos_eig}, we obtain that
there exists~$v_{\star,\lambda}\in H^s_0(\Omega_\lambda)$
such that~$ Q_{(\tilde u_\lambda,0)}(0,v_{\star,\lambda})>0$,
as long as $\lambda$ is large enough,
hence~$(\tilde u_\lambda,0)$ is linearly unstable.

This is a first example that shows the validity of Theorem~\ref{MAIN}
(a different one will be constructed in the remaining part of this paper).
It is worth pointing out that the condition that~$\lambda$ is large
translates into the fact that the domain~$\Omega_\lambda$
is small and the resource~$\sigma_\lambda$ is very unevenly
distributed. In some sense, the nonlocal diffusion may allow
the population to take advantage of the small region in which
the resource is abundant, while a less diffusive population
may starve in the portion of the environment with limited
resource.


\subsection{Branching arguments}\label{subsec:cpt}

In this subsection we focus on a particular family of distributions, indeed we assume $B_r(x_0)\subset\Omega$ and
\[
\sigma_\tau(x):=\tau \chi_{B_r(x_0)}(x)=\left\{\begin{array}{lr}
\tau \quad & x\in B_r(x_0)\\
0          & x\notin B_r(x_0)
\end{array}
\right.
\]
We show that there exist $\tau,r>0$ such that the assumptions of Proposition \ref{thm:pos_eig} hold. First of all we have to deal with with Definition \ref{def:nontriv}, which located a branching point for solutions of \eqref{eig0}. For this, for any~$\tau\in\R$, $x_0\in\R^n$, $r>0$, such that~$B_r(x_0)\subset\Omega$,
we introduce the quantity
\begin{equation}\label{RE} 
e(\tau,x_0,r):=\sup_{\substack{u\in H^1_0(\Omega) \\ \|u\|_{L^2(\Omega)}=1}}
\tau\int_{B_r(x_0)}u^2 - \int_\Omega|\nabla u|^2\,.
\end{equation}
We observe that if~$\tau\le 0$ then obviously~$ e(\tau,x_0,r)\le 0$.
Thus we use the
following notation.

\begin{definition}\label{def:ciobar}{\rm We denote 
\[
\underline\tau(x_0,r):=\sup\left\{\tau\in\R\;:\;
e(\tau,x_0,r)\le0
\right\}\,.
\]
}\end{definition}

Now we discuss some basic properties of the quantities that
we have just defined.

\begin{lemma}\label{finite}
The quantity introduced in Definition~\ref{def:ciobar} is finite, namely \[\underline\tau(x_0,r)\in [0,+\infty)\,.\]
\end{lemma}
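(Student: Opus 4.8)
\textbf{Proof proposal for Lemma~\ref{finite}.}

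The plan is to establish the two inequalities $\underline\tau(x_0,r)\ge 0$ and $\underline\tau(x_0,r)<+\infty$ separately. The lower bound is the easy half: as already observed in the text, if $\tau\le 0$ then $e(\tau,x_0,r)\le 0$ trivially, since $\tau\int_{B_r(x_0)}u^2\le 0$ and $-\int_\Omega|\nabla u|^2\le 0$. Hence every $\tau\le 0$ belongs to the set $\{\tau\in\R:e(\tau,x_0,r)\le 0\}$, and taking the supremum gives $\underline\tau(x_0,r)\ge 0$. So the membership $\underline\tau(x_0,r)\in[0,+\infty)$ reduces to showing that the defining set is bounded above, i.e. that $e(\tau,x_0,r)>0$ once $\tau$ is large enough.

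For the upper bound, first I would note that $\tau\mapsto e(\tau,x_0,r)$ is monotone nondecreasing: for a fixed admissible $u$, the quantity $\tau\int_{B_r(x_0)}u^2-\int_\Omega|\nabla u|^2$ is nondecreasing in $\tau$ (the coefficient $\int_{B_r(x_0)}u^2$ is $\ge 0$), and the supremum of nondecreasing functions is nondecreasing. Then it suffices to exhibit a single test function $u_\ast\in H^1_0(\Omega)$ with $\|u_\ast\|_{L^2(\Omega)}=1$ and $\int_{B_r(x_0)}u_\ast^2>0$; for such a $u_\ast$ we get
\[
e(\tau,x_0,r)\ge \tau\int_{B_r(x_0)}u_\ast^2-\int_\Omega|\nabla u_\ast|^2\longrightarrow+\infty\qquad\text{as }\tau\to+\infty,
\]
so in particular $e(\tau,x_0,r)>0$ for $\tau>\tau_0:=\big(\int_{B_r(x_0)}u_\ast^2\big)^{-1}\int_\Omega|\nabla u_\ast|^2$, forcing $\underline\tau(x_0,r)\le\tau_0<+\infty$. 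To produce $u_\ast$, take any nonzero $\varphi\in C_0^\infty(B_r(x_0))\subset C_0^\infty(\Omega)$ and normalize, $u_\ast:=\varphi/\|\varphi\|_{L^2(\Omega)}$; then $\int_{B_r(x_0)}u_\ast^2=1>0$ and $\int_\Omega|\nabla u_\ast|^2<+\infty$. This uses only that $B_r(x_0)\subset\Omega$ is a nonempty open set, which is part of the hypothesis.

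There is essentially no hard step here — the only thing to be mildly careful about is that one cannot a priori bound $e(\tau,x_0,r)$ from above by a quantity independent of $\tau$ (indeed it tends to $+\infty$), so the argument must be that the \emph{sign} changes, not that $e$ stays bounded. The monotonicity in $\tau$ together with the explicit divergent test function handles this cleanly, and the finiteness claim follows immediately.
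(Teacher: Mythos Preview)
Your proof is correct and is essentially the same as the paper's: the paper also picks a smooth test function supported in $B_r(x_0)$, normalizes it in $L^2$, and observes that $e(\tau,x_0,r)\ge \tau-\int|\nabla\phi|^2>0$ for $\tau$ large, which bounds $\underline\tau(x_0,r)$ above; the lower bound $\underline\tau(x_0,r)\ge0$ is noted in the paper just before Definition~\ref{def:ciobar}. Your extra remark on the monotonicity of $\tau\mapsto e(\tau,x_0,r)$ is correct but not strictly needed here, since the test-function inequality already holds for every $\tau$ beyond the threshold.
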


\begin{proof}
Let~$\phi\in C^\infty_0(B_r)$ with~$\|\phi\|_{L^2(B_r)}=1$,
and let~$u(x):=\phi(x-x_0)$. Then~$\|u\|_{L^2(\Omega)}=
\|u\|_{L^2(B_r(x_0))}=
\|\phi\|_{L^2(B_r)}=1$, and
$$ e(\tau,x_0,r)\ge
\tau\int_{B_r(x_0)}u^2 - \int_{\Omega}|\nabla u|^2
=\tau- \int_{B_r}|\nabla\phi|^2 >0$$
provided that~$\tau>\int_{B_r}|\nabla\phi|^2$.
\end{proof}

\begin{lemma}\label{est}
For any~$\tau_1\le \tau_2$ we have that
$$ e(\tau_2,x_0,r)-e(\tau_1,x_0,r) \in [0,\tau_2-\tau_1].$$
\end{lemma}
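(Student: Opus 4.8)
The plan is to prove the double inequality for $e(\tau_2,x_0,r)-e(\tau_1,x_0,r)$ directly from the variational definition in~\eqref{RE}, exploiting that $e(\tau,x_0,r)$ is a supremum over the same admissible set for every value of $\tau$, and that the dependence on $\tau$ inside the functional is affine (indeed, $\tau$ multiplies the fixed, nonnegative quantity $\int_{B_r(x_0)}u^2$, which lies in $[0,1]$ for admissible $u$ because $\|u\|_{L^2(\Omega)}=1$).

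First I would establish the lower bound $e(\tau_2,x_0,r)-e(\tau_1,x_0,r)\geq 0$. Fix any admissible $u\in H^1_0(\Omega)$ with $\|u\|_{L^2(\Omega)}=1$. Since $\tau_1\leq\tau_2$ and $\int_{B_r(x_0)}u^2\geq 0$, we have
\[
\tau_1\int_{B_r(x_0)}u^2-\int_\Omega|\nabla u|^2
\;\leq\;
\tau_2\int_{B_r(x_0)}u^2-\int_\Omega|\nabla u|^2
\;\leq\;
e(\tau_2,x_0,r).
\]
Taking the supremum over all such $u$ on the left gives $e(\tau_1,x_0,r)\leq e(\tau_2,x_0,r)$, i.e. the monotonicity part.

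Next I would establish the upper bound $e(\tau_2,x_0,r)-e(\tau_1,x_0,r)\leq\tau_2-\tau_1$. Again fix an admissible $u$. Write
\[
\tau_2\int_{B_r(x_0)}u^2-\int_\Omega|\nabla u|^2
=
\Bigl(\tau_1\int_{B_r(x_0)}u^2-\int_\Omega|\nabla u|^2\Bigr)
+(\tau_2-\tau_1)\int_{B_r(x_0)}u^2.
\]
The first bracket is $\leq e(\tau_1,x_0,r)$ by definition, and $\int_{B_r(x_0)}u^2\leq\int_\Omega u^2=\|u\|_{L^2(\Omega)}^2=1$, so the second term is $\leq\tau_2-\tau_1$ (here $\tau_2-\tau_1\geq 0$). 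Hence
\[
\tau_2\int_{B_r(x_0)}u^2-\int_\Omega|\nabla u|^2\leq e(\tau_1,x_0,r)+(\tau_2-\tau_1),
\]
and taking the supremum over admissible $u$ yields $e(\tau_2,x_0,r)\leq e(\tau_1,x_0,r)+(\tau_2-\tau_1)$, which rearranges to the claim. Combining the two bounds gives $e(\tau_2,x_0,r)-e(\tau_1,x_0,r)\in[0,\tau_2-\tau_1]$.

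This argument is entirely elementary: there is essentially no obstacle, since nothing is required beyond the affine structure of the integrand in $\tau$ and the normalization constraint that pins $\int_{B_r(x_0)}u^2$ to the interval $[0,1]$. The only mild point of care is that $e$ is a supremum, so one must argue ``for every admissible $u$'' and only pass to the supremum at the end on each side, rather than comparing maximizers (which might not exist or might differ for $\tau_1$ and $\tau_2$); both steps above are written so as to respect that.
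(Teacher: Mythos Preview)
Your proof is correct and follows essentially the same approach as the paper: both exploit the affine dependence on~$\tau$ together with the constraint~$\int_{B_r(x_0)}u^2\in[0,1]$. Your version is in fact slightly cleaner, since you take the supremum directly over admissible~$u$ rather than introducing near-maximizers~$u_{(i,\varepsilon)}$ and passing to the limit~$\varepsilon\to0$, but the underlying idea is identical.
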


\begin{proof} Fix~$\varepsilon>0$. For any~$i\in\{1,2\}$,
there exists~$u_{(i,\varepsilon)}\in H^1_0(\Omega)$,
with $\|u_{(i,\varepsilon)}\|_{L^2(\Omega)}=1$ such that
$$ e(\tau_i,x_0,r)\le \varepsilon+
\tau_i\int_{B_r(x_0)}u_{(i,\varepsilon)}^2 -
\int_\Omega|\nabla u_{(i,\varepsilon)}|^2.$$
Therefore
\begin{eqnarray*}
e(\tau_2,x_0,r)-e(\tau_1,x_0,r) &\ge& 
\tau_2\int_{B_r(x_0)}u_{(1,\varepsilon)}^2 -
\int_\Omega|\nabla u_{(1,\varepsilon)}|^2 -e(\tau_1,x_0,r)
\\ &\ge&
\tau_1\int_{B_r(x_0)}u_{(1,\varepsilon)}^2 -
\int_\Omega|\nabla u_{(1,\varepsilon)}|^2 -e(\tau_1,x_0,r)
\\ &\ge& -\varepsilon,
\end{eqnarray*}
and
\begin{eqnarray*}
e(\tau_1,x_0,r)-e(\tau_2,x_0,r)&\ge&
\tau_1\int_{B_r(x_0)}u_{(2,\varepsilon)}^2 -
\int_\Omega|\nabla u_{(2,\varepsilon)}|^2 -e(\tau_2,x_0,r)
\\ &\ge&
(\tau_1-\tau_2) \int_{B_r(x_0)}u_{(2,\varepsilon)}^2-\varepsilon
\\ &\ge& -(\tau_2-\tau_1)\int_{\Omega}u_{(2,\varepsilon)}^2-\varepsilon
\\ &=& -(\tau_2-\tau_1)-\varepsilon.
\end{eqnarray*}
The desired result now follows by taking~$\varepsilon$ as small as we wish.
\end{proof}

\begin{corollary}\label{cor e tau}
If $\tau\downarrow \underline\tau(x_0,r)$, then~$e(\tau,x_0,r)\to0$.
\end{corollary}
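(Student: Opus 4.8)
The plan is to prove continuity of the map $\tau \mapsto e(\tau,x_0,r)$ on a neighborhood of $\underline\tau(x_0,r)$ and then use the definition of $\underline\tau$ as a supremum together with the monotonicity already established. First I would note that by Lemma~\ref{est}, for $\tau_1 \le \tau_2$ we have $0 \le e(\tau_2,x_0,r) - e(\tau_1,x_0,r) \le \tau_2 - \tau_1$; in particular $\tau \mapsto e(\tau,x_0,r)$ is nondecreasing and $1$-Lipschitz, hence continuous everywhere. This is the structural input that makes everything work.

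Next I would argue as follows. Write $\underline\tau := \underline\tau(x_0,r)$, which is finite and nonnegative by Lemma~\ref{finite}. By definition of $\underline\tau$ as the supremum of the set $\{\tau : e(\tau,x_0,r) \le 0\}$, for every $\tau < \underline\tau$ we have $e(\tau,x_0,r) \le 0$ (using monotonicity: if $e(\tau',x_0,r)\le 0$ for some $\tau' \in (\tau,\underline\tau]$ we are done, and such $\tau'$ exists arbitrarily close to $\underline\tau$ by definition of the supremum; combined with $e(\tau,x_0,r) \le e(\tau',x_0,r)$). On the other hand, for every $\tau > \underline\tau$ we have $e(\tau,x_0,r) > 0$. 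Now consider a sequence $\tau_j \downarrow \underline\tau$ with $\tau_j > \underline\tau$. By continuity at $\underline\tau$, $e(\tau_j,x_0,r) \to e(\underline\tau,x_0,r)$. But by the $1$-Lipschitz bound applied to any $\tau < \underline\tau < \tau_j$, letting $\tau \uparrow \underline\tau$ gives $e(\underline\tau,x_0,r) \le \liminf_{\tau\uparrow\underline\tau} e(\tau,x_0,r) + 0 \le 0$; and since $e$ is nondecreasing and $e(\tau,x_0,r)\le 0$ for all $\tau<\underline\tau$, continuity forces $e(\underline\tau,x_0,r) \le 0$. Combined with $e(\tau_j,x_0,r)>0$ for each $j$ and $e(\tau_j,x_0,r)\to e(\underline\tau,x_0,r) \le 0$, together with $e(\tau_j,x_0,r) \ge e(\underline\tau,x_0,r)$, we conclude $e(\tau_j,x_0,r) \to 0$. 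Since this holds for every such sequence, $e(\tau,x_0,r) \to 0$ as $\tau \downarrow \underline\tau$.

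Actually the cleanest route avoids even naming sequences: from Lemma~\ref{est}, for any $\tau > \underline\tau$ and any $\tau' < \underline\tau$ we have $0 < e(\tau,x_0,r) \le e(\tau',x_0,r) + (\tau - \tau') \le 0 + (\tau - \tau')$, using $e(\tau',x_0,r) \le 0$. Letting $\tau' \uparrow \underline\tau$ yields $0 < e(\tau,x_0,r) \le \tau - \underline\tau$ for every $\tau > \underline\tau$, and sending $\tau \downarrow \underline\tau$ gives the squeeze $e(\tau,x_0,r) \to 0^+$. I would present this version, since it uses only Lemma~\ref{est} and the definition of $\underline\tau$ directly, with no compactness or separate continuity argument needed.

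I do not anticipate a serious obstacle here; the only point requiring a little care is justifying that $e(\tau',x_0,r) \le 0$ for \emph{every} $\tau' < \underline\tau$ — this is where the supremum definition plus monotonicity (Lemma~\ref{est}) is used, rather than just the existence of a single $\tau'$ with $e(\tau',x_0,r)\le 0$. Once that is in hand, the Lipschitz-type estimate from Lemma~\ref{est} does all the remaining work, and the corollary follows in two lines.
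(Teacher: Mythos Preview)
Your argument is correct and uses the same two ingredients as the paper --- Lemma~\ref{est} and the definition of $\underline\tau(x_0,r)$ --- but you package them more efficiently. The paper argues by contradiction: assuming a sequence $\tau_j\downarrow\underline\tau$ with $|e(\tau_j,x_0,r)|\ge a>0$, it first rules out $e(\tau_j,x_0,r)\le -a$ via Lemma~\ref{est}, then picks an auxiliary sequence $\tilde\tau_j\uparrow\underline\tau$ with $e(\tilde\tau_j,x_0,r)\le0$ (from the supremum definition) and gets a contradiction from $a\le e(\tau_j,x_0,r)-e(\tilde\tau_j,x_0,r)\le\tau_j-\tilde\tau_j\to0$. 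Your ``clean'' version simply observes that $e(\tau',x_0,r)\le0$ for every $\tau'<\underline\tau$ (monotonicity plus supremum) and then reads off the squeeze $0<e(\tau,x_0,r)\le\tau-\underline\tau$ directly from the Lipschitz estimate; this is the same computation unwound into a two-line direct proof, and is arguably preferable.
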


\begin{proof} Suppose not, i.e. there exists a sequence
\begin{equation}\label{PO}\tau_j\ge
\underline\tau(x_0,r)\end{equation}
with~$\tau_j\to \underline\tau(x_0,r)$ as~$j\to+\infty$,
such that
\begin{equation}\label{PO2}
|e(\tau_j,x_0,r)|\ge a,\end{equation}
for some~$a>0$. 
We claim that
\begin{equation}\label{PO2.1}
e(\tau_j,x_0,r)\ge a.\end{equation}
We prove it by contradiction: if not, by~\eqref{PO2},
we would have that~$e(\tau_j,x_0,r)\le-a$. Thus,
we set
$$\tau_a:=\underline\tau(x_0,r)+\frac{a}{2}.$$
We notice that~$\tau_a>\underline\tau(x_0,r)$,
therefore, by Definition~\ref{def:ciobar}, we have that \[e(\tau_a,x_0,r)>0\,.\]
In addition, we have that~$
\tau_a>\tau_j$ if~$j$ is large enough,
thus we make use of
Lemma~\ref{est} and we obtain that, for large~$j$,
$$ 0+a
\le e(\tau_a,x_0,r)-e(\tau_j,x_0,r)\le \tau_a -\tau_j.$$
Taking the limit in~$j$, we conclude that
$$ a\le \tau_a-\underline\tau(x_0,r)=\frac{a}{2}.$$
This is a contradiction and~\eqref{PO2.1} is established.

Also, by Definition~\ref{def:ciobar},
we know that there exists a sequence~$\tilde\tau_j\le \underline\tau(x_0,r)$
with~$\tilde\tau_j\to \underline\tau(x_0,r)$,
such that~$e(\tilde\tau_j,x_0,r)\le0$.
Accordingly, by~\eqref{PO2.1},
\begin{equation}\label{76}
e(\tau_j,x_0,r)-e(\tilde\tau_j,x_0,r)\ge a.
\end{equation}
Notice that~$\tau_j\ge \underline\tau(x_0,r)\ge \tilde\tau_j$ and
$$ \lim_{t\to+\infty} \tau_j-\tilde\tau_j=
\underline\tau(x_0,r)-\underline\tau(x_0,r)=0.$$
Thus, by Lemma~\ref{est}
$$ \lim_{t\to+\infty}
e(\tau_j,x_0,r)-e(\tilde\tau_j,x_0,r) \le \lim_{t\to+\infty} \tau_j-\tilde\tau_j
=0.$$
This is in contradiction with~\eqref{76}
and so the desired result is proved.
\end{proof}
Before stating and proving the main theorem
of this subsection, we investigate the behavior of $\underline\tau(x_0,r)$ under scaling. 
\begin{proposition}\label{prop:stima_ciobar}
Fix $s'\in (0,1)$. There exists a constant $\tau_*:=\tau_*(s',\Omega)$ such that
\[
\underline\tau(x_0,r)\ge r^{-2s'}\tau_*(s',\Omega)
\] 
for every~$x_0\in\Omega$ and~$r>0$ such that~$B_r(x_0)\subset\Omega$.
\end{proposition}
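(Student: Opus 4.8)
The plan is to reduce the claimed lower bound on $\underline\tau(x_0,r)$ to a single, scale-explicit inequality. By Definition~\ref{def:ciobar}, in order to prove that $\underline\tau(x_0,r)\ge \tau_* r^{-2s'}$ it suffices to show that $e(\tau_* r^{-2s'},x_0,r)\le0$, that is, recalling~\eqref{RE}, that
$$\tau_*\,r^{-2s'}\int_{B_r(x_0)}u^2\,dx\le\int_\Omega|\nabla u|^2\,dx\qquad\text{for every }u\in H^1_0(\Omega).$$
Thus the whole Proposition follows once we establish a \emph{localized Sobolev inequality with sharp $r$-scaling}: there is a constant $C_*=C_*(n,s',\Omega)$, \emph{independent} of $x_0$ and $r$, such that
$$\int_{B_r(x_0)}u^2\,dx\le C_*\,r^{2s'}\int_\Omega|\nabla u|^2\,dx\qquad\text{for every }u\in H^1_0(\Omega),$$
after which we simply set $\tau_*:=1/C_*$.

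To prove this inequality I would extend $u$ by zero outside $\Omega$ — legitimate since $u\in H^1_0(\Omega)$, so that $u\in H^1(\R^n)$ with $\|\nabla u\|_{L^2(\R^n)}=\|\nabla u\|_{L^2(\Omega)}$ — and then interpolate the $L^2$-mass on the ball between the exponents $2$ and $q:=\frac{2n}{n-2s'}$. Hölder's inequality on $B_r(x_0)$ gives
$$\int_{B_r(x_0)}u^2\,dx\le |B_r(x_0)|^{1-\frac2q}\,\|u\|_{L^q(\R^n)}^2= \omega_n^{\,2s'/n}\,r^{2s'}\,\|u\|_{L^q(\R^n)}^2,$$
because $1-\frac2q=\frac{2s'}{n}$ and $|B_r(x_0)|=\omega_n r^n$. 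Next I would bound $\|u\|_{L^q(\R^n)}$ by the Gagliardo--Nirenberg--Sobolev inequality: since $s'\in(0,1)$ the exponent $q$ satisfies $2\le q\le \frac{2n}{n-2}$ when $n\ge3$ and $2\le q<\infty$ when $n=2$, so $\|u\|_{L^q(\R^n)}\le C(n,q)\,\|\nabla u\|_{L^2(\R^n)}^{s'}\,\|u\|_{L^2(\R^n)}^{1-s'}$ (one checks that the interpolation parameter equals exactly $s'$). Finally, absorbing the $\|u\|_{L^2}$ factor by the Poincar\'e inequality in $\Omega$, $\|u\|_{L^2(\Omega)}\le C_P(\Omega)\|\nabla u\|_{L^2(\Omega)}$, yields $\|u\|_{L^q(\R^n)}^2\le C(n,q)^2 C_P(\Omega)^{2(1-s')}\|\nabla u\|_{L^2(\Omega)}^2$, and combining with the Hölder step gives the localized inequality with $C_*=\omega_n^{2s'/n}C(n,q)^2C_P(\Omega)^{2(1-s')}$, which depends only on $n$, $s'$ and $\Omega$. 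Equivalently, one may run the same argument through the fractional Sobolev embedding $\|u\|_{L^{2n/(n-2s')}(\R^n)}\le C\,[u]_{H^{s'}(\R^n)}$ together with the elementary bound $[u]_{H^{s'}(\R^n)}^2\le C(n,s')\bigl(\|\nabla u\|_{L^2}^2+\|u\|_{L^2}^2\bigr)$, valid precisely because $s'<1$; this is closer in spirit to Remark~\ref{rmk:rs} and to the rest of the paper.

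The content is entirely in the scaling bookkeeping rather than in any analytic difficulty: the only point requiring care is the choice $q=\frac{2n}{n-2s'}$, tuned so that the volume factor $|B_r(x_0)|^{1-2/q}$ produces exactly the power $r^{2s'}$, together with the verification that this $q$ lies in the admissible range of the Sobolev embedding for \emph{every} dimension $n\ge2$ — which is exactly where the hypothesis $s'<1$ is used, since for $s'=1$, $n=2$ one would be forced to take $q=\infty$. One must also keep track that the Sobolev, Poincar\'e and $\omega_n$ constants see only that $u$ is supported in $\Omega$, and never the center $x_0$ nor the radius $r$; this is what makes $\tau_*$ uniform, as required in the statement.
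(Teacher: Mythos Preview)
Your argument is correct. The reduction to the localized inequality
\[
\int_{B_r(x_0)}u^2\,dx\le C_*\,r^{2s'}\int_\Omega|\nabla u|^2\,dx
\]
is exactly what the paper does, and from that point both proofs finish the same way by setting $\tau_*=1/C_*$.

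The execution of the localized inequality differs, however, and your version is arguably cleaner. The paper takes the H\"older exponent to be the full critical Sobolev exponent ($p=2n/(n-2)$ when $n\ge3$), which produces the power $r^{2}$ rather than $r^{2s'}$; the surplus is then absorbed using the diameter bound $r\le D_0$. This forces a case split: for $n=2$ the critical exponent is infinite, so the paper instead invokes the Moser--Trudinger inequality to obtain an arbitrary finite $L^p$ bound and then again trims the excess power with the diameter. Your choice $q=2n/(n-2s')$ is tuned to hit $r^{2s'}$ on the nose, so no trimming is needed; the price is that $q$ is subcritical, which is why you must interpolate via Gagliardo--Nirenberg and then feed in Poincar\'e to close the estimate. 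The payoff is a unified argument valid for all $n\ge2$ without Moser--Trudinger, and the $\Omega$-dependence of $\tau_*$ enters transparently through the Poincar\'e constant rather than through the diameter. Both routes are short; yours isolates more precisely where the hypothesis $s'<1$ is spent.
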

\begin{proof}
We claim that
\begin{equation}\label{CLA}
\int_{B_r(x_0)}u^2\le
c(s',\Omega)\,r^{2s'}\|\nabla u\|^2_{L^2(\Omega)},
\end{equation}
for some constant~$c(s',\Omega)>0$.
Once~\eqref{CLA} is proved, one can finish the proof
of the desired result by arguing as follows. One sets~$\tau_*(s',\Omega):=1/c(s',\Omega)$. Then,
for every~$\tau\le r^{-2s'}\tau_*(s',\Omega)$
(i.e. for every~$\tau\le 1/(c(s',\Omega)\,r^{2s'})$), one has that
\[
\int_\Omega |\nabla u|^2-\tau
\int_{B_r(x_0)} u^2\ge
\int_\Omega |\nabla u|^2-\frac{1}{c(s',\Omega)\,r^{2s'}}
\int_{B_r(x_0)} u^2\ge 0\,,
\]
where the latter inequality is a consequence of the claim \eqref{CLA}.
This gives that $e(\tau,x_0,r)\ge0$ for any~$\tau\le r^{-2s'}\tau_*(s',\Omega)$,
and so, by Definition~\ref{def:ciobar},
we have that~$\underline\tau(x_0,r)\ge r^{-2s'}\tau_*(s',\Omega)$,
thus proving the desired result.

Due to these observations, it only remains to prove~\eqref{CLA}.
To this scope, we observe that, given~$p>2$,
by the H\"older inequality
with exponents $\frac{p}{2}$ and~$\frac{p}{p-2}$, we have
$$ \int_{B_r(x_0)}u^2\le
\left(\omega_n r^n\right)^{\frac{p-2}{p}}\|u\|^2_{L^{p}(\Omega)}.$$
Therefore, the claim in~\eqref{CLA} is established if
we show that there exists~$p>2$ such that
\begin{equation}\label{CLA2}
r^{\frac{(p-2)n}{p}} \|u\|^2_{L^{p}(\Omega)} \le C(s',\Omega,p)\,
r^{2s'}\,\|\nabla u\|^2_{L^2(\Omega)},
\end{equation}
for some~$C(s',\Omega,p)>0$. So, now it only remains to prove~\eqref{CLA2}.
To this goal,
we deal separately\footnote{The case $n\ge 3$ is 
simpler because the Sobolev conjugated exponent $2^*=2n/(n-2)$ is not critical. Indeed, in this case the parameter $s'$ does not play much role.}
with the cases $n=2$ and $n\ge 3$. 

We start with $n\ge 3$. In this case,
we denote by $p:=\frac{2n}{n-2}>2$
the Sobolev conjugate exponent of 2.
Notice that~${\frac{(p-2)n}{p}}=2$ and
the Sobolev inequality (see e.g. formula~(7.26) in~\cite{Gil_Tru})
bounds~$\|u\|^2_{L^{p}(\Omega)}$ with~$C(\Omega)\,\|\nabla u\|^2_{L^2(\Omega)}$,
for some~$C(\Omega)>0$. Hence,
if we denote by~$D_0>0$ the diameter of~$\Omega$, we have that
$$ r^{\frac{(p-2)n}{p}} \|u\|^2_{L^{p}(\Omega)} 
=r^2 \|u\|^2_{L^{p}(\Omega)}\le C_0 r^{2s'} D_0^{2-2s'}
\|\nabla u\|^2_{L^2(\Omega)},$$
and estimate~\eqref{CLA2} follows in this case.

For the case $n=2$, we observe that
$$ \lim_{p\to+\infty} \frac{p-2}{p} = 1> s',$$
so we can choose an even integer~$p=p(s')\in (2,+\infty)$
large enough such that
\begin{equation}\label{ciobar_choice_p}
\frac{p-2}{p}>s'\,.
\end{equation}
Also, the critical Sobolev embedding (see e.g. formula~(7.38)
in~\cite{Gil_Tru}) yields that
\begin{equation}\label{0dhjk}
\int_\Omega {\rm exp}\,\left( \frac{|u(x)|}{c_1 \|\nabla u\|_{L^2(\Omega)}
}\right)^2\,dx\le c_2\,|\Omega|,\end{equation}
for suitable~$c_1$, $c_2>0$. Then, since
$$ e^t=\sum_{k=0}^{+\infty} \frac{t^k}{k!}\ge \frac{t^{p/2}}{(p/2)!},$$
we deduce from~\eqref{0dhjk} that 
$$ \int_\Omega
\left( \frac{|u(x)|}{\|\nabla u\|_{L^2(\Omega)} }\right)^p \,dx\le 
C(\Omega,p),$$
for some~$C(\Omega,p)>0$. Therefore
$$ \|u\|^2_{L^{p}(\Omega)}\le C'(\Omega,p)\,\|\nabla u\|_{L^2(\Omega)}^2 ,$$
for some~$C'(\Omega,p)>0$.
As a consequence, 
if~$D_0>0$ is the diameter of~$\Omega$,
\begin{align*}
r^{\frac{(p-2)n}{p}} \|u\|^2_{L^{p}(\Omega)} & =
r^{2\left(\frac{(p-2)}{p}-s'\right)} \,r^{2s'}\,\|u\|^2_{L^{p}(\Omega)}\\
& \le C'(\Omega,p) \,D_0^{2\left(\frac{(p-2)}{p}-s'\right)}\,r^{2s'}
\|\nabla u\|_{L^2(\Omega)}^2\,.
\end{align*}
This completes the proof of~\eqref{CLA2} when~$n=2$.
\end{proof}
\begin{theorem}\label{thm:percpt} Let~$r$, $\tau>0$.
Consider the family of distributions $\sigma_\tau=\tau\chi_{B_r(x_0)}$ and a corresponding family of stationary solutions $\tilde u_\tau\in H^1_0(\Omega)$, that is
\[
-\Delta\tilde u_\tau=(\sigma_\tau-\tilde u_\tau)\tilde u_\tau\,.
\] 
If $\tau\downarrow \underline\tau(x_0,r)$, then $\tilde u_\tau\to 0$ uniformly.
\end{theorem}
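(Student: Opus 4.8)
The plan is to first reduce the uniform convergence to the $L^2$ statement $\|\tilde u_\tau\|_{L^2(\Omega)}\to 0$ as $\tau\downarrow\underline\tau(x_0,r)$, and then bootstrap to the sup norm by elliptic regularity. As a preliminary step I would record two a priori bounds that are uniform in $\tau$ near $\underline\tau(x_0,r)$. First, since $\tilde u_\tau\ge 0$ solves \eqref{eig0} with resource $\sigma_\tau$, Lemma~\ref{TAP} gives $0\le\tilde u_\tau\le\|\sigma_\tau\|_{L^\infty(\Omega)}=\tau$. Second, testing the equation against $\tilde u_\tau$ itself and using that $\sigma_\tau-\tilde u_\tau\le\tau$ on $B_r(x_0)$ while $\sigma_\tau\equiv 0$ outside $B_r(x_0)$,
\[
\int_\Omega|\nabla\tilde u_\tau|^2\,dx=\int_\Omega(\sigma_\tau-\tilde u_\tau)\tilde u_\tau^2\,dx\le\tau\int_{B_r(x_0)}\tilde u_\tau^2\,dx\le\tau^3|B_r|,
\]
so the family $\{\tilde u_\tau\}$ is bounded in $H^1_0(\Omega)$ uniformly for $\tau$ in a bounded set.

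Next I would prove $\|\tilde u_\tau\|_{L^2(\Omega)}\to 0$ by contradiction: suppose there are $\delta>0$ and $\tau_j\downarrow\underline\tau(x_0,r)$ with $\|\tilde u_{\tau_j}\|_{L^2(\Omega)}\ge\delta$. By the bounds above and Rellich's theorem, up to a subsequence $\tilde u_{\tau_j}\rightharpoonup u_*$ weakly in $H^1_0(\Omega)$, strongly in $L^2(\Omega)$ and a.e.\ in $\Omega$; then $u_*\ge 0$ and $\|u_*\|_{L^2(\Omega)}\ge\delta$, so $u_*\not\equiv 0$. Passing to the limit in the weak formulation — the linear term by weak convergence, the quadratic term $\int\tilde u_{\tau_j}^2\varphi$ by dominated convergence (using $\tilde u_{\tau_j}\le\tau_j\le C$), and $\tau_j\int_{B_r(x_0)}\tilde u_{\tau_j}\varphi$ by strong $L^2$ convergence — shows that $u_*\in H^1_0(\Omega)$ solves $-\Delta u_*=(\underline\tau(x_0,r)\chi_{B_r(x_0)}-u_*)u_*$ in $\Omega$. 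Now comes the key point: since $\underline\tau(x_0,r)$ is the supremum of the $\tau$'s with $e(\tau,x_0,r)\le 0$ (Definition~\ref{def:ciobar}; cf.\ Corollary~\ref{cor e tau}), there are $\tau_k\uparrow\underline\tau(x_0,r)$ with $e(\tau_k,x_0,r)\le 0$, hence $\tau_k\int_{B_r(x_0)}u^2\le\int_\Omega|\nabla u|^2$ for every $u\in H^1_0(\Omega)$, and letting $k\to\infty$,
\[
\underline\tau(x_0,r)\int_{B_r(x_0)}u^2\,dx\le\int_\Omega|\nabla u|^2\,dx\qquad\text{for all }u\in H^1_0(\Omega).
\]
Testing the equation for $u_*$ against $u_*$ (admissible, since $u_*\in H^1_0(\Omega)\cap L^\infty(\Omega)$ by Lemma~\ref{TAP}) gives $\int_\Omega|\nabla u_*|^2\,dx=\underline\tau(x_0,r)\int_{B_r(x_0)}u_*^2\,dx-\int_\Omega u_*^3\,dx$, which combined with the displayed inequality forces $\int_\Omega u_*^3\,dx\le 0$, hence $u_*\equiv 0$ because $u_*\ge 0$ — a contradiction. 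Therefore $\|\tilde u_\tau\|_{L^2(\Omega)}\to 0$.

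Finally I would upgrade this to uniform convergence by a routine bootstrap. Interpolating the $L^2$-decay against the uniform bound $\|\tilde u_\tau\|_{L^\infty(\Omega)}\le\tau$ gives $\|\tilde u_\tau\|_{L^p(\Omega)}\to 0$ for every finite $p$; since $|(\sigma_\tau-\tilde u_\tau)\tilde u_\tau|\le\tau\,\tilde u_\tau$, the right-hand side $f_\tau:=(\sigma_\tau-\tilde u_\tau)\tilde u_\tau$ then satisfies $\|f_\tau\|_{L^p(\Omega)}\to 0$ for every finite $p$. Applying the global $L^\infty$ estimate for the Dirichlet problem $-\Delta w=f$ in $\Omega$, $w=0$ on $\partial\Omega$ (the De Giorgi--Stampacchia bound, valid on any bounded open set, with $p>n/2$), $\|\tilde u_\tau\|_{L^\infty(\Omega)}\le C(n,\Omega)\,\|f_\tau\|_{L^n(\Omega)}\to 0$, which is the assertion. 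The main obstacle is the middle step: one needs both the correct limiting inequality at the branching threshold $\underline\tau(x_0,r)$ — exactly what the closedness built into Definition~\ref{def:ciobar}/Corollary~\ref{cor e tau} supplies — and the algebraic cancellation obtained by testing the limit equation against $u_*$, which is what rules out a nontrivial limit; the a priori estimates and the elliptic bootstrap are standard.
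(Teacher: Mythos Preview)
Your proof is correct, but it takes a different route from the paper's. The paper avoids the compactness/contradiction argument entirely: testing the equation against~$\tilde u_\tau$ gives the identity
\[
\int_\Omega \tilde u_\tau^3 \,=\, \tau\int_{B_r(x_0)}\tilde u_\tau^2 - \int_\Omega|\nabla\tilde u_\tau|^2,
\]
and the right-hand side is bounded above by~$\|\tilde u_\tau\|_{L^2(\Omega)}^2\, e(\tau,x_0,r)$ straight from the definition~\eqref{RE}. Since~$\|\tilde u_\tau\|_{L^2(\Omega)}$ stays bounded and~$e(\tau,x_0,r)\to 0$ by Corollary~\ref{cor e tau}, one gets~$\|\tilde u_\tau\|_{L^3(\Omega)}\to 0$ \emph{directly and quantitatively}, with no subsequence extraction and no limiting equation. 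The final bootstrap to~$L^\infty$ is then the same as yours (the paper uses Theorem~8.15 in Gilbarg--Trudinger, which is exactly the De~Giorgi--Stampacchia bound you invoke).

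So the difference is only in the middle step: you pass to a weak limit~$u_*$, identify it as a solution at the threshold, and use the inequality~$\underline\tau\int_{B_r}u^2\le\int|\nabla u|^2$ to force~$u_*\equiv 0$; the paper instead reads the same inequality \emph{at level~$\tau$} as the bound~$\int\tilde u_\tau^3\lesssim e(\tau,x_0,r)$ and lets Corollary~\ref{cor e tau} do the work. Your approach is a standard soft argument and would generalize well to settings where the analogue of~$e(\tau,x_0,r)$ is harder to control quantitatively; the paper's approach is shorter and yields, in principle, a rate of convergence tied to the modulus in Corollary~\ref{cor e tau}.
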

\begin{proof} First of all, we notice that
\begin{equation}\label{bound+}
\tilde u_\tau\le \tau,
\end{equation}
thanks to Lemma~\ref{TAP}.
Now we fix~$\varepsilon\in(0,1)$ and we claim that
\begin{equation}\label{L3}
\| \tilde u_\tau\|_{L^3(\Omega)}\le\varepsilon,
\end{equation}
provided that~$\tau$ is close enough to~$\underline\tau(x_0,r)$.
To establish this, we test the equation against~$\tilde u_\tau$ itself,
and we obtain that
$$\int_\Omega| \nabla \tilde u_\tau|^2=
\int_\Omega (\sigma_\tau-\tilde u_\tau)\tilde u_\tau^2
=\tau \int_{B_r(x_0)} \tilde u_\tau^2 -\int_\Omega \tilde u_\tau^3,$$
which in turn gives
$$ \| \tilde u_\tau\|_{L^3(\Omega)}^3=\int_\Omega \tilde u_\tau^3=
\tau \int_{B_r(x_0)} \tilde u_\tau^2-\int_\Omega |\nabla \tilde u_\tau|^2
\le e(\tau,x_0,r),$$
thanks to~\eqref{RE}.
This and Corollary~\ref{cor e tau}
imply~\eqref{L3}.

Now we set~$g(x):= (\sigma_\tau-\tilde u_\tau)\tilde u_\tau$.
Notice that~$-\Delta u_\tau=g$ in~$\Omega$ and, by~\eqref{bound+}
and Lemma~\ref{finite},
$$ |g|\le(\sigma_\tau+\tilde u_\tau)\,\tilde u_\tau\le
2\tau\,\,\tilde u_\tau\le 2(\underline\tau(x_0,r)+1)\,
\tilde u_\tau\le C_0 \,\tilde u_\tau,$$
for some~$C_0>0$ independent of~$\tau$, as long as~$\tau$
is sufficiently close to~$\underline\tau(x_0,r)$.
In particular, by~\eqref{bound+} and~\eqref{L3},
\begin{equation}\label{SM} \|g\|_{L^{n+3}(\Omega)}\le C_0\,\left( 
\int_\Omega \tilde u_\tau^{n+3}
\right)^{\frac{1}{n+3}} \le C_1
\,\left(
\int_\Omega \tilde u_\tau^{3}
\right)^{\frac{1}{n+3}} \le C_1\varepsilon^{\frac{3}{n+3}},\end{equation}
for some~$C_1>0$.
Moreover, using the H\"older inequality with exponents~$3$ and~$3/2$,
$$ \|\tilde u_\tau\|_{L^2(\Omega)}^2
=\int_\Omega \tilde u_\tau^2
\le  |\Omega|^{\frac13}
\,\left(\int_\Omega \tilde u_\tau^3\right)^{\frac23}=|\Omega|^{\frac13}\,
\|\tilde u_\tau\|_{L^3(\Omega)}^2,$$
therefore, recalling~\eqref{L3} and~\eqref{SM},
$$ \|\tilde u_\tau\|_{L^2(\Omega)}+\|g\|_{L^{n+3}(\Omega)}
\le |\Omega|^{\frac16} \varepsilon+
C_1\varepsilon^{\frac{3}{n+3}}
\le C_2\varepsilon^{\frac{3}{n+3}},$$
for some~$C_2>0$.
We combine this information with Theorem~8.15
of~\cite{Gil_Tru} (used here with~$f:=0$ and~$q:=2(n+3)>n$), thus we obtain that
$$ \|\tilde u_\tau\|_{L^\infty(\Omega)} \le C
\,\big( \|\tilde u_\tau\|_{L^2(\Omega)}+\|g\|_{L^3(\Omega)}\big)\le
C\,C_2\,\varepsilon^{\frac{3}{n+3}},$$
for some~$C>0$,
as long as~$\tau$ is sufficiently close to~$\underline\tau(x_0,r)$,
which is the desired claim.
\end{proof}
\begin{corollary}\label{cor:exist_sigma} 
Fix~$s'\in (s,1)$. Let~$r$, $\tau>0$.
Assume that
\begin{equation} \label{idjjsjsj}
r< \left(\frac{
C_\sharp(s,B_1) \,\tau_*(s',\Omega)}{2}\right)^{\frac{1}{2(s'-s)}},
\end{equation}
where~$C_\sharp(s,B_1)$ is the Poincar\'e-Sobolev constant in~\eqref{CP}
and~$\tau_*(s',\Omega)$ is given by 
Proposition~\ref{prop:stima_ciobar}.

Consider the family of distributions $\sigma_\tau=\tau\chi_{B_r(x_0)}$.
Then there exists~$\tau>\underline\tau(x_0,r)$ such that
both the reverse Poincar\'e-Sobolev condition in~\eqref{poscond}
and the mismatch condition
in~\eqref{cond2}
are satisfied.\end{corollary}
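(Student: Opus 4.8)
The plan is to select $\tau$ just above the branching threshold $\underline\tau(x_0,r)$ and to check the two required conditions by hand, feeding in the two quantitative facts already at our disposal: the lower bound $\underline\tau(x_0,r)\ge r^{-2s'}\tau_*(s',\Omega)$ from Proposition~\ref{prop:stima_ciobar}, and the uniform decay $\|\tilde u_\tau\|_{L^\infty(\Omega)}\to 0$ as $\tau\downarrow\underline\tau(x_0,r)$ from Theorem~\ref{thm:percpt}. The factor $1/2$ in hypothesis~\eqref{idjjsjsj} is exactly the slack that gets spent comparing these two inputs.

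First I would rewrite~\eqref{idjjsjsj}, using $s'>s$, in the equivalent form $r^{2(s'-s)}<\tfrac12\,C_\sharp(s,B_1)\,\tau_*(s',\Omega)$. Combining this with Proposition~\ref{prop:stima_ciobar} and with the scaling identity $C_\sharp(s,B_r(x_0))=r^{2s}C_\sharp(s,B_1)$ from Remark~\ref{rmk:rs}, a one-line manipulation yields
\[
\underline\tau(x_0,r)\ \ge\ r^{-2s'}\tau_*(s',\Omega)\ >\ \frac{2}{r^{2s}C_\sharp(s,B_1)}\ =\ \frac{2}{C_\sharp(s,B_r(x_0))}\,.
\]
In words, the branching value overshoots the reciprocal fractional Poincar\'e-Sobolev constant of $B_r(x_0)$ by a factor strictly larger than $2$.

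Next I would invoke Theorem~\ref{thm:percpt}: since $\|\tilde u_\tau\|_{L^\infty(\Omega)}\to 0$ as $\tau\downarrow\underline\tau(x_0,r)$, I may fix $\tau>\underline\tau(x_0,r)$ close enough to $\underline\tau(x_0,r)$ so that $\|\tilde u_\tau\|_{L^\infty(\Omega)}<1/C_\sharp(s,B_r(x_0))$. For this $\tau$ the reverse Poincar\'e-Sobolev condition~\eqref{poscond} for $\sigma_\tau=\tau\chi_{B_r(x_0)}$ holds: indeed $\tau>\underline\tau(x_0,r)$ forces $e(\tau,x_0,r)>0$ by Definition~\ref{def:ciobar} and the monotonicity of $e$ in Lemma~\ref{est}, and $e(\tau,x_0,r)>0$ is equivalent to~\eqref{poscond} with $\sigma=\sigma_\tau$ by the $2$-homogeneity in $u$ of the functional appearing in~\eqref{RE}. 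In particular Theorem~\ref{thm:sta_sol} then produces the stationary solution $\tilde u_\tau$, so the statement is not vacuous.

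It remains to verify the mismatch condition~\eqref{cond2}, which I would check on the very ball $B_r(x_0)$. There $\sigma_\tau\equiv\tau$, and $\tilde u_\tau$ is continuous by Corollary~\ref{0dj789js}, so
\begin{align*}
\inf_{x\in B_r(x_0)}\big(\sigma_\tau(x)-\tilde u_\tau(x)\big)
&=\tau-\sup_{B_r(x_0)}\tilde u_\tau\ \ge\ \underline\tau(x_0,r)-\|\tilde u_\tau\|_{L^\infty(\Omega)}\\
&>\ \frac{2}{C_\sharp(s,B_r(x_0))}-\frac{1}{C_\sharp(s,B_r(x_0))}\ =\ \frac{1}{C_\sharp(s,B_r(x_0))}\,,
\end{align*}
which is precisely~\eqref{cond2}. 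No genuine analytic difficulty remains; the only thing demanding care is the bookkeeping of strict inequalities, so that the lower bound $2/C_\sharp(s,B_r(x_0))$ for $\underline\tau(x_0,r)$ supplied by~\eqref{idjjsjsj} and Proposition~\ref{prop:stima_ciobar} genuinely exceeds $1/C_\sharp(s,B_r(x_0))+\|\tilde u_\tau\|_{L^\infty(\Omega)}$ --- and the requisite smallness of $\|\tilde u_\tau\|_{L^\infty(\Omega)}$, which is the substantive point, has already been delivered by Theorem~\ref{thm:percpt}.
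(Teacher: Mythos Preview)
Your proof is correct and follows essentially the same approach as the paper: both combine the lower bound $\underline\tau(x_0,r)\ge r^{-2s'}\tau_*(s',\Omega)$ from Proposition~\ref{prop:stima_ciobar} with the uniform smallness of $\tilde u_\tau$ from Theorem~\ref{thm:percpt}, then use the scaling of $C_\sharp$ and hypothesis~\eqref{idjjsjsj} to close the mismatch inequality. Your treatment of the reverse Poincar\'e--Sobolev condition (reading it off directly from $\tau>\underline\tau(x_0,r)$ via Definition~\ref{def:ciobar}) is in fact tidier than the paper's, which appeals somewhat loosely to ``$\tau$ large enough'' before restricting to $\tau$ near $\underline\tau(x_0,r)$; the only cosmetic difference is that the paper routes the final estimate through an auxiliary parameter~$\varepsilon$ while you compare directly with $1/C_\sharp(s,B_r(x_0))$.
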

\begin{proof} By taking~$\tau$ large enough, one
can easily fulfill~\eqref{hp_exist}.
This and Lemma~\ref{COMP} guarantee the
reverse Poincar\'e-Sobolev condition in~\eqref{poscond}.

In particular, by Theorem~\ref{thm:sta_sol},
we can consider the solution~$\tilde u_\tau$ corresponding
to the resource~$\sigma_\tau$.

Now we fix
\begin{equation}\label{VE}
\varepsilon\in\left(0,\,\frac{
\tau_*(s',\Omega)}{2\,r^{2(s'-s)} }\right).\end{equation}
Thanks to Theorem \ref{thm:percpt}, we can choose $\tau$ sufficiently 
close to $\underline\tau(x_0,r)$ such that $\|\tilde u_\tau\|_{L^\infty(\Omega)}\le r^{-2s}\varepsilon$.
Therefore, for every $x\in B_r(x_0)$, we have that
\[
\sigma_\tau(x)-\tilde u_\tau(x)\ge\sigma_\tau(x)-r^{-2s}\varepsilon>\underline\tau(x_0,r)-r^{-2s}\varepsilon
.\]
{F}rom this and
Proposition \ref{prop:stima_ciobar}, we have that, for every $x\in B_r(x_0)$,
\[
\sigma_\tau(x)-\tilde u_\tau(x)\ge r^{-2s'}\tau_*(s',\Omega)-r^{-2s}\varepsilon\,.
\]
So, recalling~\eqref{VE},
$$ \inf_{x\in x\in B_r(x_0)}\big(
\sigma_\tau(x)-\tilde u_\tau(x)\big)>\frac{ r^{-2s'}\tau_*(s',\Omega) }{2}.$$
Thus, from Remark~\ref{rmk:rs} and~\eqref{idjjsjsj}, we obtain
\begin{align*}
\frac{1}{C_\sharp(s,B_r(x_0))} & = \frac{1}{r^{2s} C_\sharp(s,B_1)} = \frac{r^{-2s'}\,r^{2(s'-s)}}{
C_\sharp(s,B_1)}\\ 
& <\frac{ r^{-2s'}\, C_\sharp(s,B_1) \,\tau_*(s',\Omega) }{  2\,C_\sharp(s,B_1)}\\
& =\frac{r^{-2s'}\tau_*(s',\Omega)}{2}<\inf_{x\in B_r(x_0)}\left(\sigma_\tau(x)-\tilde u_\tau(x)\right)\,.
\end{align*}
This establishes the mismatch condition
in~\eqref{cond2}.
\end{proof}

{F}rom Proposition~\ref{thm:pos_eig}
and Corollary~\ref{cor:exist_sigma},
it follows that we have constructed another example for which
the equilibrium~$(\tilde u_\tau,0)$ is linearly unstable,
confirming again Theorem~\ref{MAIN}.
Once again, 
this example corresponds to a resource that is unevenly spread
in the environment, and the nonlocal diffusion
may compensate such unbalanced distribution of resource.

As a final observation, we would like to stress that most of
the techniques discussed in this paper are of quite general
nature and can be efficiently exploited in similar problems
with different species and different dispersive properties.

\section{A purely nonlocal phenomenon}\label{sec:cur}

Goal of this section is to rely the stability of stationary points of type $(0,\tilde v)$ with Theorem \ref{NNC} and show how, with our arguments, there is no hope to prove an analogue of Theorem \ref{MAIN} for $(0,\tilde v)$. We include the proof of Theorem~\ref{NNC}
and clarify that it is a purely nonlocal feature.

The linearization of the system \eqref{syst} at $(0,\tilde v)$ gives
\[
L_{(0,\tilde v)}(u,v)=\left(\begin{array}{cc}
\Delta+(\sigma-\tilde v) & 0 \\
-\tilde v & -(-\Delta)^s+(\sigma-2\tilde v)
\end{array}\right)
\left(\begin{array}{c}
u\\
v
\end{array}\right)\,.
\] 
Thus, any instability result would be a consequence of an inequality of type
\[
Q_{(0,\tilde v)}(u_\star,0)=-[u_\star]_{H^1}^2+\int(\sigma-\tilde v)u_\star^2>0\,.
\]

This means that, if we want to run the same argument that we did in Section \ref{sec:ciccia} for $(\tilde u,0)$, then we have to find an analogue for the mismatch condition \eqref{cond2}. Roughly speaking, we need to know that, at least in certain circumstances, the amount of leftovers of the dominant population $\tilde v$ exceeds a given constant, depending on the size of the domain. But, around a stationary point of type $(0,\tilde v)$, the nonlocal population $\tilde v$ tends to exhaust all the available resource $\sigma$ in the domain $\Omega$. This claim is motivated by Theorem \ref{NNC}, because formula~\eqref{CU4} states that the population $u_\varepsilon$
locally fits with any given resource, up to an arbitrarily small
error estimated by~\eqref{CU1}. Of course we are neglecting the Dirichlet boundary condition on $u_\varepsilon$.

We are now left with the proof of Theorem \ref{NNC}. 

\begin{proof}[Proof of Theorem~\ref{NNC}] By Theorem 1.1 in~\cite{Di_Sa_Val},
we know that we can approximate~$\sigma$ by a $s$-harmonic
function in~$B_1$: namely,
we have that there exist~$R_{\varepsilon,\sigma}>1$
and~$u_\varepsilon \in C^k(B_1)\cap C^s(\R^n)$ 
satisfying~\eqref{CU2}, \eqref{CU3} and
\begin{equation}
\label{CU7} \|\sigma-u_\varepsilon\|_{C^k(B_1)}\le\varepsilon.
\end{equation}
Now we define
\begin{equation}
\label{CU8} 
\sigma_\varepsilon:=u_\varepsilon.\end{equation}
In this framework, formula~\eqref{CU1}
follows from~\eqref{CU7} and~\eqref{CU8}.
Moreover, by~\eqref{CU2} and~\eqref{CU8},
$$ (\sigma_\varepsilon(x) - u_\varepsilon(x))\,u_\varepsilon(x)=0=
(-\Delta)^s u_\varepsilon(x),$$
for any~$x\in B_1$, which proves~\eqref{CU5}.\end{proof}

We stress that Theorem~\ref{NNC}
is only due to the nonlocal feature of the equation and
it does not have any local counterpart, as pointed
out by the next result.

\begin{proposition}\label{PCY}
Let~$M>0$.
Let~$\sigma\in C^2(B_1)$ with
\begin{align*}
   &  \sigma(x)\ge M\quad \text{for any }x\in B_{1/16}\hphantom{\setminus B_1}\\
\text{and } \ &   \sigma(x)\le 1\quad \text{for any } x\in B_1\setminus B_{1/10}\,.
\end{align*}
Then, there exists~$M_0>0$ and $\varepsilon>0$ such that, for any~$M\ge M_0$, if
$\sigma_\varepsilon \in C^2(B_1)$ satisfies
\begin{equation}\label{DU1}
\|\sigma-\sigma_\varepsilon\|_{C^2(B_1)}\le\varepsilon
\end{equation}
and~$u_\varepsilon \in C^2(B_1)$ satisfies
\begin{equation}\label{DU5}
-\Delta u_\varepsilon(x) =(\sigma_\varepsilon(x) - u_\varepsilon(x))\,u_\varepsilon(x)
\quad \text{for any }x\in B_1\,,
\end{equation}
then
\begin{equation}\label{DU2}
\|u_\varepsilon-\sigma_\varepsilon\|_{C^2(B_1)}>\varepsilon
\end{equation}
In particular, the local counterpart of Theorem~\ref{NNC} is false.
\end{proposition}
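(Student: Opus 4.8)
The plan is to argue by contradiction: suppose that for every $M_0$ and every $\varepsilon$ there is some $M \ge M_0$ with a resource $\sigma_\varepsilon$ satisfying \eqref{DU1} and a solution $u_\varepsilon$ of \eqref{DU5} with $\|u_\varepsilon - \sigma_\varepsilon\|_{C^2(B_1)} \le \varepsilon$. The point is that a ``locally adapting'' solution $u_\varepsilon \approx \sigma_\varepsilon \approx \sigma$ would be forced to be large (of order $M$) on $B_{1/16}$ and small (of order $1$) near the boundary sphere $\partial B_{1/10}$; but such a function cannot satisfy the semilinear equation \eqref{DU5} because the reaction term $(\sigma_\varepsilon - u_\varepsilon)u_\varepsilon$ is then essentially $O(\varepsilon)$ pointwise, so $u_\varepsilon$ is almost harmonic, whereas a nearly harmonic function that is $\ge M/2$ somewhere in $B_{1/16}$ cannot drop to $\le 2$ by the time it reaches $B_{1/10}$ — this is quantitatively incompatible with Harnack-type / mean value estimates once $M$ is large enough relative to the (absolute) constants that appear. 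Here is where the local nature is essential: the analogous fractional statement fails precisely because $s$-harmonic functions have no such rigidity (they are governed by far-away values, which is exactly what Theorem~\ref{NNC} exploits).

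First I would make the near-harmonicity precise. From \eqref{DU5} and \eqref{DU1}, on $B_1$ we have $-\Delta u_\varepsilon = (\sigma_\varepsilon - u_\varepsilon)u_\varepsilon$ with $\|\sigma_\varepsilon - u_\varepsilon\|_{C^0(B_1)} \le \varepsilon$ (using the contradiction hypothesis $\|u_\varepsilon - \sigma_\varepsilon\|_{C^2} \le \varepsilon$) and $u_\varepsilon \le \|\sigma\|_{C^0(B_1)} + \varepsilon \le C(M)$; actually one needs an $M$-independent bound on $u_\varepsilon$ only where it matters. A cleaner route: set $w := u_\varepsilon - \sigma_\varepsilon$, so $\|w\|_{C^2(B_1)} \le \varepsilon$ and $u_\varepsilon = \sigma_\varepsilon + w$; then by \eqref{DU5}, $-\Delta \sigma_\varepsilon - \Delta w = -w\,(\sigma_\varepsilon + w)$, hence $\Delta \sigma_\varepsilon = \Delta w + w(\sigma_\varepsilon + w)$. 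This shows $\|\Delta \sigma_\varepsilon\|_{C^0(B_1)} \le \varepsilon + \varepsilon(\|\sigma\|_{C^0} + 2\varepsilon)$. But this bounds $\Delta \sigma_\varepsilon$ in terms of $\varepsilon$ and $\|\sigma\|_{C^0(B_1)}$, which can itself be as large as $M$ — so the crude version is not yet a contradiction. The fix is to localize away from $B_{1/16}$: on the annulus $A := B_1 \setminus B_{1/10}$ we have $\sigma_\varepsilon \le 1 + \varepsilon$, so there $\|\Delta \sigma_\varepsilon\|_{C^0(A)} \le \varepsilon + \varepsilon(1 + 2\varepsilon) =: \delta(\varepsilon)$ with $\delta(\varepsilon) \to 0$, and likewise $\|\Delta u_\varepsilon\|_{C^0(A)} \le 2\delta(\varepsilon)$.

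Next I would extract the rigidity. On $B_{1/16}$ we have $u_\varepsilon \ge M - \varepsilon \ge M/2$ for $\varepsilon$ small. Pick any point $p \in B_{1/16}$ and compare the value $u_\varepsilon(p) \ge M/2$ with values on, say, $\partial B_{1/10}$ where $u_\varepsilon \le 2$ (for $\varepsilon \le 1$). The function $u_\varepsilon$ need not be harmonic on all of $B_{1/10}$ because $B_{1/16} \subset B_{1/10}$ and there $\sigma_\varepsilon$ is large, so $\Delta u_\varepsilon$ is not small on $B_{1/16}$. To get around this, I would instead use a barrier/comparison argument on the annulus $\tilde A := B_{1/10} \setminus \overline{B_{1/16}}$, where $\sigma_\varepsilon$ is controlled: there, using $|\Delta u_\varepsilon| \le 2\delta(\varepsilon)$ (the annulus $\tilde A$ is inside $A$... wait, $\tilde A \subset B_{1/10}$, not inside $B_1 \setminus B_{1/10}$). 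So I would re-draw the geometry: the hypothesis gives $\sigma \le 1$ on $B_1 \setminus B_{1/10}$ and $\sigma \ge M$ on $B_{1/16}$, and says nothing on the gap $B_{1/10} \setminus B_{1/16}$; to run the comparison I need control on $\Delta u_\varepsilon$ on an annulus surrounding $B_{1/16}$, which I do not have. The honest resolution — and this is the main obstacle — is to use a maximum principle on the full ball $B_{1/10}$ together with the sign of the reaction: since $u_\varepsilon \ge 0$ is not given, one must first establish $0 \le u_\varepsilon \le \|\sigma_\varepsilon\|_{C^0}$ via the standard truncation argument (cf. Lemma~\ref{TAP}), then on $B_1 \setminus B_{1/10}$ where $\sigma_\varepsilon \le 1+\varepsilon$ show $u_\varepsilon$ cannot exceed, say, $2$ by a Lemma~\ref{TAP}-type estimate localized to that region — but Lemma~\ref{TAP} is global, so one needs a boundary-layer refinement. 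I would therefore build an explicit supersolution on $B_1\setminus B_{1/10}$: a function of the form $c_1 + c_2\,\Phi$ with $\Phi$ the fundamental solution (or a radial harmonic function) chosen so that $-\Delta(\text{supersol}) \ge (\sigma_\varepsilon - (\text{supersol}))(\text{supersol})$ there and the supersolution is $\le 2$ on $\partial B_{1/10}$ while being $\ge u_\varepsilon$ on $\partial B_1$ (where $u_\varepsilon$ is only known to be $\le \|\sigma\|_{C^0}$, which is bad).

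Given these difficulties, the cleanest correct plan is the interior one: work entirely inside $B_{1/10}$. Establish $0 \le u_\varepsilon \le \|\sigma\|_{C^0(B_1)} + \varepsilon$ on $B_1$ by truncation. On $B_{1/10}$, write $-\Delta u_\varepsilon = f$ with $f = (\sigma_\varepsilon - u_\varepsilon)u_\varepsilon$; by the contradiction hypothesis $\|f\|_{C^0(B_{1/10})} \le \varepsilon(\|\sigma\|_{C^0} + 2\varepsilon)$, so $u_\varepsilon$ is a ``small perturbation of harmonic'' on $B_{1/10}$ in the sense that $\|u_\varepsilon - h\|_{C^0(B_{1/10})} \le C\varepsilon(\|\sigma\|_{C^0}+1)$ where $h$ is the harmonic function with $h = u_\varepsilon$ on $\partial B_{1/10}$ (Newtonian potential estimate on a ball of fixed radius, constant absolute). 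On $\partial B_{1/10}$, $u_\varepsilon \le 1 + \varepsilon$, so by the maximum principle $h \le 1+\varepsilon$ on all of $B_{1/10}$, hence $u_\varepsilon \le 1 + \varepsilon + C\varepsilon(\|\sigma\|_{C^0}+1)$ on $B_{1/10}$, in particular at $p\in B_{1/16}$. But $u_\varepsilon(p) \ge M - \varepsilon$. So $M - \varepsilon \le 1 + \varepsilon + C\varepsilon(\|\sigma\|_{C^0}+1)$. The remaining subtlety is that $\|\sigma\|_{C^0(B_1)}$ is itself $\gtrsim M$, so the right side is $\lesssim 1 + C\varepsilon M$; for this to contradict $M \le 2 + C\varepsilon M$ we just need $C\varepsilon < 1/2$, i.e. \emph{first fix $\varepsilon$ small} (absolute, via the absolute Newtonian-potential constant $C$ on $B_{1/10}$) and \emph{then} choose $M_0 = 4$: for $M \ge M_0$ we get $M \le 2 + M/2$, i.e. $M \le 4$, contradicting $M \ge M_0 = 4$ (take $M_0$ slightly larger to get strict contradiction). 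This yields $\|u_\varepsilon - \sigma_\varepsilon\|_{C^2(B_1)} > \varepsilon$, i.e. \eqref{DU2}. The main obstacle, as flagged, is organizing the quantifiers — the constant in the Newtonian potential estimate must be absolute so that $\varepsilon$ can be chosen before $M$ — and handling the fact that the a priori bound on $u_\varepsilon$ degrades with $M$; the resolution is that the bad term carries a factor $\varepsilon$, so a single small absolute $\varepsilon$ suffices for all large $M$.
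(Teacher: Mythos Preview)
Your final plan---work on $B_{1/10}$, write $u_\varepsilon$ as a harmonic function plus a Newtonian potential of $(\sigma_\varepsilon-u_\varepsilon)u_\varepsilon$, and compare the large value on $B_{1/16}$ with the small boundary value on $\partial B_{1/10}$---is sound and is essentially the same mechanism as the paper's proof. The paper uses a slightly different elliptic tool: instead of the maximum-principle/Newton-potential decomposition on $B_{1/10}$, it bounds $|\Delta u_\varepsilon|\le C_\sigma\varepsilon$ on all of $B_1$ (with $C_\sigma:=2+\|\sigma\|_{C^2(B_1)}$) and applies the weak Harnack inequality on $B_{1/4}$ to compare $\|u_\varepsilon\|_{L^1(B_{1/4})}\gtrsim M$ with $\inf_{B_{1/8}}u_\varepsilon\le 2$. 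Both routes exploit the same rigidity of nearly harmonic functions; yours is arguably more elementary.

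There is, however, a genuine slip in your quantifier endgame. You write that $\|\sigma\|_{C^0(B_1)}\gtrsim M$ implies the right-hand side is $\lesssim 1+C\varepsilon M$; the inequality goes the wrong way. Knowing $\|\sigma\|_{C^0}\ge M$ gives only a \emph{lower} bound on $1+C\varepsilon\|\sigma\|_{C^0}$, and the proposition places no upper bound on $\sigma$ in the gap $B_{1/10}\setminus B_{1/16}$, so $\|\sigma\|_{C^0}$ can be arbitrarily large compared with $M$. Hence you cannot choose an absolute $\varepsilon$ this way. The fix is simply to match the paper's quantifier order: $\sigma$ is fixed at the outset, the contradiction hypothesis is ``for every $\varepsilon>0$ there exist $\sigma_\varepsilon,u_\varepsilon$ \dots'', and one takes $\varepsilon$ small depending on $\|\sigma\|_{C^0(B_1)}$ (exactly as the paper does via $C_\sigma$), while $M_0$ remains an absolute constant. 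With that adjustment your inequality $M-2\varepsilon\le 1+2\varepsilon+C\varepsilon(\|\sigma\|_{C^0}+1)$ is already a contradiction for $\varepsilon$ small and $M\ge M_0$ absolute, and no $\|\sigma\|_{C^0}\lesssim M$ claim is needed. (A minor aside: your appeal to a Lemma~\ref{TAP}-type truncation for $0\le u_\varepsilon$ is not available here since $u_\varepsilon$ carries no boundary data on $\partial B_1$; but you do not actually need nonnegativity---the bound $|u_\varepsilon|\le\|\sigma\|_{C^0}+2\varepsilon$ from $|u_\varepsilon-\sigma_\varepsilon|\le\varepsilon$ suffices for your potential estimate.)
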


\begin{proof} Suppose by contradiction that for every~$\varepsilon>0$ there exist $\sigma_\varepsilon$
and~$u_\varepsilon$ satisfying not only \eqref{DU1} and \eqref{DU5}, but also
\[
\|u_\varepsilon-\sigma_\varepsilon\|_{C^2(B_1)}\le\varepsilon\,.
\]
{F}rom~\eqref{DU1} and~\eqref{DU2},
we know that
\begin{equation}\label{90gg}
\|u_\varepsilon-\sigma\|_{L^\infty(B_1)}\le
\|u_\varepsilon-\sigma\|_{C^2(B_1)}\le
2\varepsilon.
\end{equation}
As a consequence,
\[ \|u_\varepsilon\|_{L^\infty(B_1)}
\le 2+\|\sigma\|_{C^2(B_1)}\le C_\sigma,\]
for some~$C_\sigma>0$, possibly depending on the fixed resource~$\sigma$.
This, \eqref{DU2} and~\eqref{DU5} give that, in~$B_1$,
$$ |\Delta u_\varepsilon|
\le |\sigma_\varepsilon- u_\varepsilon|\,|u_\varepsilon|\le C_\sigma \,\varepsilon.$$
Thus, the weak Harnack inequality (see e.g. Theorem~8.18
in~\cite{Gil_Tru}) gives that
\begin{equation}\label{WH1}
\|u_\varepsilon\|_{L^1(B_{1/4})}\le C_1\,
\Big( \inf_{B_{1/8}}u_\varepsilon+C_\sigma\,\varepsilon\Big),\end{equation}
for some constant~$C_1>0$.
Now, by~\eqref{90gg} and~\eqref{DU1}, we see that
$u_\varepsilon(x)\ge M-2\varepsilon$ in~$B_{1/16}$ and therefore
\begin{equation}\label{WH2}
\|u_\varepsilon\|_{L^1(B_{1/4})}\ge
\int_{B_{1/16}} u_\varepsilon(x)\,dx \ge C_2\,(M-2\varepsilon),\end{equation}
for some constant~$C_2>0$. Similarly, from~\eqref{90gg} and~\eqref{DU1},
we have that~$u_\varepsilon\le 1+2\varepsilon$ in~$B_1\setminus B_{1/10}$
and therefore
\begin{equation}\label{WH3}
\inf_{B_{1/8}}u_\varepsilon\le 1+2\varepsilon\le 2.
\end{equation}
By inserting~\eqref{WH2} and~\eqref{WH3}
into~\eqref{WH1} we obtain that
$$ M-2\varepsilon\le
C_3\,( 2+C_\sigma\,\varepsilon),$$
for some~$C_3>0$.
Thus, we take~$M\ge M_0:= 3C_3$.
This fixes~$\sigma$ and gives that
$$ C_3\le M-2C_3\le 2\varepsilon
+C_3\,( 2+C_\sigma\,\varepsilon) -2C_3=
(2+C_3 C_\sigma)\,\varepsilon.$$
By taking~$\varepsilon$ small, we obtain a contradiction
and we complete the proof of Proposition~\ref{PCY}.
\end{proof}


\section{Further comments on stability and nontrivial solutions}\label{sec:glob}

Of course, the results presented in this paper do not aim
to exhaust the variety of scenarios offered by the analysis
of local and nonlocal competing species.
In particular, further investigations about
existence and local/global stability of equilibrium solution
are desirable, also with the aim of establishing under which
conditions local and nonlocal strategies are convenient for
the evolution. 

In particular, while we focused here on the local stability
(i.e. whether or not a mutation of strategy turns out to
be persistent for small times), the strategic question
for biological
population in competition for large times is mostly related to
global stability.

The question of attractors for the global dynamics
is related to the regularity properties of the semiflow
and to the associated maximum and comparison principles.
For this reason, though not directly used in this paper,
we present here in detail a general comparison principle
for a single fractional equation (the general case of
systems deserves a separate analysis,
also due to the lack of
cooperativeness between biological species, see e.g. formula~(7)
in~\cite{Boy}, and we plan future further
investigation along the lines of~\cite{HIR1, HIR2}):

\begin{lemma}\label{COMP:LEM}
Let $T>0$ and consider a locally Lipschitz function $f$. Let $v$ and $w$ be bounded and continuous solutions
of
\begin{equation}\label{le091}
\partial_t v+(-\Delta)^s v +f(v)\ge
\partial_t w+(-\Delta)^s w +f(w) \end{equation}
on $\R^n\times(0,T]$, with $v(x,t)\ge w(x,t)$ for any
$x\in\R^n\setminus\Omega$ and any $t\in[0,T]$, and
$v(x,0)\ge w(x,0)$ for any
$x\in\Omega$.

Then $v(x,t)\ge w(x,t)$ for any for any
$x\in\R^n$ and any $t\in [0,T]$.
\end{lemma}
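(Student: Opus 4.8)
The plan is to prove this parabolic comparison principle by the standard device of setting $z := w - v$, showing that $z$ satisfies a differential inequality with a linear (in $z$) reaction term, and then ruling out the possibility that $z$ becomes positive somewhere by examining a point where a suitably penalized version of $z$ first attains a positive maximum. First I would subtract the two inequalities in~\eqref{le091}: since $v$ and $w$ are bounded, $|v|,|w|\le K$ on $\R^n\times[0,T]$ for some $K$, and $f$ is locally Lipschitz, so there is a constant $L=L(K)$ with $|f(v)-f(w)|\le L|v-w|$ on the relevant range. Hence $z=w-v$ satisfies, in the appropriate (viscosity or distributional) sense,
\[
\partial_t z + (-\Delta)^s z \le L\,|z| \le L\,z^+ \quad\text{on }\R^n\times(0,T],
\]
together with $z\le 0$ on $(\R^n\setminus\Omega)\times[0,T]$ and $z(\cdot,0)\le 0$ on $\Omega$ (hence on all of $\R^n$). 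The goal is to show $z\le 0$ throughout.

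Next I would remove the growth factor by introducing $\zeta(x,t) := e^{-\mu t} z(x,t)$ with $\mu > L$; then $\zeta$ satisfies $\partial_t \zeta + (-\Delta)^s \zeta \le (L-\mu)\zeta^+ - \mu\,\zeta^- \le 0$ wherever $\zeta > 0$, with the same sign conditions on the parabolic boundary. Suppose for contradiction that $\sup_{\R^n\times[0,T]} \zeta =: m > 0$. Since $v,w$ are bounded and continuous and $\zeta \le 0$ outside $\Omega$, the supremum is attained (or approximately attained, after an arbitrarily small penalization of the form $+\eta t$ that I would add to force strict positivity of $\partial_t$) at some point $(x_0,t_0)$ with $x_0\in\overline\Omega$ and $t_0 > 0$ — it cannot be at $t_0=0$ because $\zeta(\cdot,0)\le 0 < m$. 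At such a maximum point one has $\partial_t \zeta(x_0,t_0) \ge 0$ (actually $\ge \eta > 0$ with the penalization), while the nonlocal term is
\[
(-\Delta)^s \zeta(x_0,t_0) = c_{n,s}\,\mathrm{P.V.}\!\int_{\R^n} \frac{\zeta(x_0,t_0)-\zeta(y,t_0)}{|x_0-y|^{n+2s}}\,dy \;\ge\; 0,
\]
since $\zeta(x_0,t_0)=m \ge \zeta(y,t_0)$ for all $y$, and the integrand is nonnegative; moreover it is \emph{strictly} positive unless $\zeta(\cdot,t_0)\equiv m$, which is impossible because $\zeta\le 0$ outside $\Omega$. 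Adding these, $\partial_t\zeta + (-\Delta)^s\zeta > 0$ at $(x_0,t_0)$, contradicting the differential inequality (which gives $\le 0$ there, since $\zeta(x_0,t_0)=m>0$). Letting the penalization parameter $\eta\to 0$ and undoing the substitutions yields $z \le 0$, i.e. $v\ge w$ on $\R^n\times[0,T]$.

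The main obstacle is making the "evaluate at the maximum point" step rigorous at the stated regularity: $v,w$ are only assumed bounded and continuous, so $(-\Delta)^s$ need not exist pointwise in a classical sense, and one must either work in the viscosity framework (testing against smooth functions touching $\zeta$ from above, for which the nonlocal evaluation is well-defined and the sign argument goes through verbatim) or first establish enough interior parabolic regularity for~\eqref{le091} to justify the pointwise computation. A secondary technical point is the non-compactness of $\R^n$: the supremum of $\zeta$ may only be approached at spatial infinity, so I would insert a further penalization such as $-\delta(1+|x|^2)^{s/2}$, verify that $(-\Delta)^s$ of this corrector is bounded so that it perturbs the inequality by a controlled amount, localize the maximum, and then send $\delta\to 0$. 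Both of these are routine in the fractional-parabolic literature, so I would treat them briefly and cite the relevant comparison/regularity results rather than reproving them.
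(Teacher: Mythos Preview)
Your approach is correct and is essentially the same classical maximum-principle argument as the paper's: modify the difference $v-w$ by a time-dependent penalization, locate an extremum, and exploit the sign of the fractional Laplacian there to reach a contradiction. The paper uses an additive penalization $W:=v-w+\varepsilon t+\varepsilon^2$ together with a reduction to short time $T\le 1/(4(M+1))$ (where $M$ is the local Lipschitz constant of $f$) and then argues at the \emph{first} spacetime point where $W$ vanishes; you instead use the multiplicative weight $\zeta:=e^{-\mu t}(w-v)$ with $\mu>L$ and argue at a \emph{maximum} of $\zeta$. Your variant avoids the short-time reduction and iteration step, which is a small gain; otherwise the two arguments are interchangeable.

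Two minor points. First, the displayed inequality $L|z|\le Lz^+$ is false as written (in fact $|z|\ge z^+$); what you need, and what you actually use, is that at the maximum point $\zeta>0$, hence $z>0$ and $|z|=z=z^+$ there, so the slip is harmless. Second, the spatial penalization $-\delta(1+|x|^2)^{s/2}$ is unnecessary in this setting: in the paper $\Omega$ is bounded, so $\zeta\le 0$ outside $\Omega$ forces any positive supremum to be attained on the compact set $\overline\Omega\times[0,T]$. The regularity caveat you raise (pointwise evaluation of $(-\Delta)^s$ for merely continuous solutions) is legitimate; the paper's proof, presented as ``of classical flavor'', glosses over this as well.
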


\begin{proof} The proof is of classical flavor,
see e.g. Proposition A.5 in \cite{delaLla_Val}.
By possibly iterating the argument,
it is enough to prove the result up to a small time,
hence, without loss of generality we may assume that 
\begin{equation}\label{t179}
T\le \frac1{4(M+1)},\end{equation}
where $M\ge0$ is the (local) Lipschitz constant of $f$
-- more precisely, we take $M$ such that
\begin{equation}\label{t178}
{\mbox{$f(w(x,t)-\eta)-f(w(x,t))\le M|\eta|$
for any $\eta\in [-1,1]$.}}
\end{equation}
Now we suppose, by contradiction, that the claim were false.
Then, it would exist $(\bar x,\bar t)\in\Omega\times [0,T]$
such that $v(\bar x,\bar t)< w(\bar x,\bar t)$.
We define
\begin{equation}\label{t177} 
\varepsilon :=\min \left\{ \frac{ w(\bar x,\bar t)-
v(\bar x,\bar t)}{4},\; \frac{T}{4}\right\}\end{equation}
and $W:= v-w+\varepsilon t+\varepsilon^2$.
Notice that
\begin{eqnarray*}
&& W(x,0)=v(x,0)-w(x,0) +\varepsilon^2 >0\\ {\mbox{and }}&&
W(\bar x,\bar t) =
v(\bar x,\bar t) -w(\bar x,\bar t) +\varepsilon\bar t+\varepsilon^2
\le v(\bar x,\bar t) -w(\bar x,\bar t) +2\varepsilon <0,
\end{eqnarray*}
thanks to \eqref{t177}.
Hence,
there exists $z_*:=(x_*,t_*)\in\Omega\times[0,\bar t]\subseteq
\Omega\times [0,T]$
such that $W(x,t)>0$ for any $x\in\Omega$ and any $t\in[0,t_*)$,
with $W(x_*,t_*)=0$.

In particular, $W(x,t_*)\ge 0=W(x_*,t_*)$, and so the
integrodifferential definition of the fractional Laplacian gives that
$(-\Delta)^s W(x_*,t_*)\le0$.

Also, $W(x_*,t)\ge0=W(x_*,t_*)$, and thus $\partial_t W(x_*,t_*)\le0$.
In addition, we have that 
$$v(z_*)= W(z_*)+w(z_*)-\varepsilon t_*-\varepsilon^2
= w(z_*)-\eta_*,$$
where $\eta_*:=\varepsilon t_* +\varepsilon^2 \in\left[0,\frac{\varepsilon}{2(M+1)}\right]$,
thanks to \eqref{t179}
and \eqref{t177}.

As a consequence of these observations, and recalling \eqref{le091}
and \eqref{t178}, we find that
\begin{eqnarray*}
0&\ge& \partial_t W(z_*) +(-\Delta)^s W(z_*)\\
&=& \big( \partial_t v+(-\Delta)^s v\big)(z_*)
-\big(\partial_t w-(-\Delta)^s w\big)(z_*) +\varepsilon 
\\ &\ge& f(w(z_*))-f(v(z_*)) +\varepsilon
\\ &=& f(w(z_*))-f(w(z_*)-\eta_*)+\varepsilon\\
&\ge& -M \eta_* +\varepsilon
\\ &>&0,
\end{eqnarray*}
which is a contradicition.
\end{proof}

In addition, we remark that the theory developed in the previous
pages also allows us to investigate the stability
of nonlocal species. For instance, one sees
that large resources allow both 
local and nonlocal populations to stem from the pure equilibria.
More precisely, if~$\sigma$ is larger than the first classical
and fractional eigenvalue, then the
reverse Poincar\'e-Sobolev condition in~\eqref{poscond}
(and its fractional counterpart \eqref{poscond_frac}) are satisfied.

So, in case system~\eqref{90} possesses two unstable
pure equilibria~$(\tilde u,0)$ and~$(0,\tilde v)$,
a positive mixed equilibrium~$ (u_* (x), v_* (x))$
may arise. 

Of course, a detailed analysis of all these circumstances
in a general setting
and a careful check of the nontrivial details involved
by the dynamics associated to the flow
go beyond the scope of this paper, but we refer also to~\cite{Caf_Di_Val}
for a series of examples which carefully 
compare local and nonlocal behaviors of biological populations
in terms of the size of the domain and of the sparseness
of the resources.


\addcontentsline{toc}{chapter}{Bibliography}

\bibliographystyle{plain}

\def\cdprime{$''$}


\end{document}